\documentclass[10pt]{article} 

\usepackage{latexsym,amsmath,amssymb,a4wide,amscd,authblk}
\sloppy

\usepackage[latin1]{inputenc}
\usepackage[T1]{fontenc}

\usepackage{array}

\usepackage{enumerate}

\usepackage{amsthm}

\usepackage{lscape}

\usepackage{url}

\usepackage{mathabx}

\usepackage{longtable}
\usepackage{multirow}

\usepackage{MnSymbol}

\usepackage{amsthm}
\numberwithin{equation}{section}
\numberwithin{figure}{section}
\theoremstyle{plain}
\newtheorem {theorem}{Theorem}[section]

\newtheorem {lemma}[theorem]{Lemma}

\newtheorem {kor}[theorem]{Corollary}

\theoremstyle{definition}
\newtheorem {definition}[theorem]{Definition}
\theoremstyle{remark}
\newtheorem {remark}[theorem]{Remark}

\usepackage{accents}
\newcommand\thickbar[1]{\accentset{\rule{.7em}{.8pt}}{#1}}

\newcommand{\SB}{\thickbar{S}}
\newcommand{\TB}{\thickbar{T}}

\newcommand{\link}{\operatorname{lk}}
\newcommand{\str}{\operatorname{st}}

\usepackage[pdftex]{graphicx} 

\usepackage{pdfpages}

\usepackage[nooneline]{caption}

\begin{document}

\date{}

\renewcommand{\Authfont}{\scshape\small}
\renewcommand{\Affilfont}{\itshape\small}
\renewcommand{\Authand}{,}
\renewcommand{\Authands}{, }

\title{\Large {\bf The Pachner graph of $2$-spheres}}

\author[1] {Benjamin A.~Burton}
\author[2] {Basudeb Datta}
\author[1] {Jonathan Spreer}

\affil[1] {School of Mathematics and Physics, The University of Queensland,
Brisbane QLD 4072, Australia. \texttt{bab@maths.uq.edu.au}, \texttt{jonathan.spreer@fu-berlin.de}.}
\affil[2] {Department of Mathematics, Indian Institute of Science,
Bangalore 560\,012, India. \texttt{dattab@iisc.ac.in}.}

\maketitle

{\em
It is well-known that the Pachner graph of $n$-vertex triangulated $2$-spheres is connected, i.e., each pair of $n$-vertex triangulated $2$-spheres can be turned into each other by a sequence of edge flips for each $n\geq 4$. In this article, we study various induced subgraphs of this graph. In particular, we prove that the subgraph of $n$-vertex {\em flag} $2$-spheres distinct from the double cone is still connected. In contrast, we show that the subgraph of $n$-vertex {\em stacked} $2$-spheres has at least as many connected components as there are trees on $\lfloor\frac{n-5}{3}\rfloor$ nodes with maximum node-degree at most four.
}

\medskip

\noindent
\textbf{MSC 2010: }
57Q15;  
57M20; 
05C10. 

\medskip
\noindent
\textbf{Keywords: } Triangulated $2$-sphere, planar triangulation, Pachner graph, edge flip, flag $2$-sphere, stacked $2$-sphere.

\section{Introduction}

The {\em Pachner graph} of triangulated $2$-spheres is the graph, whose nodes are triangulated  $2$-spheres (also known as planar triangulations), and two nodes are connected by an arc if and only if their corresponding triangulations can be transformed into each other by a single {\em bistellar move}, i.e., an edge flip, a stellar subdivision of a triangle or its inverse, see Figure~\ref{fig:pachner2}.

The Pachner graph of triangulated $2$-spheres is connected. More precisely, starting from an arbitrary node representing an $n$-vertex $2$-sphere, a path of length $O(n)$ can be found in the Pachner graph ending at the node representing the boundary of the tetrahedron. Conversely, it is not difficult to see that $\Omega(n)$ arcs are also necessary for the length of such a path.

The Pachner graph has a natural graded structure into induced subgraphs on the sets of nodes representing $n$-vertex triangulated $2$-spheres, $n$ fixed: The arcs within a level correspond to edge flips, the arcs corresponding to stellar subdivisions (and their inverses) connect different levels of the grading. It is well-known that each such level, sometimes called the {\em flip graph (of $n$-vertex triangulated $2$-spheres)}, is connected \cite{Wagner1936FourColourThm}. Moreover, its diameter is bounded from above by $5n - 23$ due to work by Cardinal, Hoffmann, Kusters, T{\'o}th and Wettstein \cite{Cardinal15ArcDiagramsFlips} and bounded from below by $7n/3 - 34$ due to work by Frati \cite{Frati17LowerBound}. These two results are the most recent additions to a series of papers aimed at reducing the gap between upper bounds and lower bounds for the diameter of the flip graph. One of the current open problems in this area is to find an upper bound and a lower bound which are apart by a factor of two (the optimum achievable by bounding the diameter as twice the distance of a particular pair of triangulations). See \cite{Bose11FlipGraphSurvey} for a survey on previous attempts to bound the diameter of the flip graph of the $2$-sphere.

In \cite{Sulanke09IsoFreeEnumeration}, Sulanke and Lutz show that there are exactly $59$ twelve-vertex triangulations of the orientable surface of genus six. Since they all must be neighbourly, none of them allows any edge flips. Thus, the Pachner graph of twelve-vertex triangulated orientable surfaces of genus six is the discrete graph on $59$ nodes.

See various chapters of \cite{DeLoera10Triangulations} for further and closely related research concerning the flip graph and similar objects.

\medskip

Structural results for, as well as bounds on flip distances in Pachner graphs (of spheres or, more general, triangulated manifolds) which are as precise as the ones mentioned above, are unlikely to be provable in dimensions greater than two. For instance, the best upper bound for distances in the Pachner graph of {\em generalised triangulations of the $3$-sphere} is given by $O( t^2 2^{c t^2})$ for the number of moves between a $t$-tetrahedron triangulation of $S^3$ and the boundary of the $4$-simplex, see Mijatovi\'c \cite{Mijatovic03SimplTrigsOfS3}. Naturally, the corresponding upper bound in the simplicial setting must be at least as large. Moreover, the $n$-th level of the Pachner graph of simplicial triangulations of the $3$-sphere is not even connected (in contrast to the generalised setting, see \cite{Matveev07AlgorTopClassif3Mflds}): Consider an $n$-vertex triangulation of the $3$-sphere containing (i) no edge of degree three and (ii) the complete graph with $n$ vertices as edges. Such a triangulation only admits stellar subdivisions as bistellar moves and is thus isolated in the Pachner graph of $n$-vertex triangulated $3$-spheres. See \cite{Dougherty04Unflippable,Spreer14CyclicCombMflds} for a number of examples of such triangulated $3$-spheres.

Even more, in dimensions greater than three, no such general upper bounds can exist at all due to the undecidability of the homeomorphism problem.

\medskip

In this paper we focus on the connectedness of certain subgraphs of the Pachner graph of $n$-vertex triangulated $2$-spheres. Namely, we consider what are called {\em stacked} and {\em flag $2$-spheres} (see Sections~\ref{ssec:flag} and \ref{ssec:stacked} for details). In many ways, flag $2$-spheres are the counterpart to stacked $2$-spheres. While stacked $2$-spheres contain the maximum number of induced $3$-cycles, flag $2$-spheres do not contain any such cycle. Moreover, every triangulated $2$-sphere can be decomposed into a collection of flag $2$-spheres and boundaries of the tetrahedron (called {\em standard $2$-spheres}) by iteratively cutting along its induced $3$-cycles and pasting the missing triangles. For a flag $2$-sphere this decomposition is the $2$-sphere itself. For stacked $2$-spheres it yields the maximum number of connected components, each isomorphic to the standard $2$-sphere.

In \cite[Theorem~\ref{thm:mori}]{Mori03DiagonalFlips} the authors give upper bounds for the number of edge flips connecting two flag $2$-spheres within the class of
Hamiltonian triangulations. Our main result states that such a sequence of edge flips exists even {\em within the class of flag $2$-spheres} -- as long as both triangulations are distinct from the double cone $\Gamma_n$ over the $(n-2)$-gon (Figure~\ref{fig:An}(a)), see Theorem~\ref{thm:pachnerGraph}. Observe that excluding the $n$-vertex double cone $\Gamma_n$, $n\geq 6$, from Theorem~\ref{thm:pachnerGraph} is necessary: $\Gamma_n$ is a flag $2$-sphere in which every edge contains a degree four vertex. Thus every edge flip on $\Gamma_n$ produces a vertex of degree three and the resulting complex is not flag. In particular, $\Gamma_n$ cannot be connected to any other flag $2$-sphere by an edge flip.

This theorem complements a result by Lutz and Nevo stating that every pair of $d$-dimensional flag complexes, $d \geq 3$, is connected by a sequence of edge subdivisions, and edge contractions~\cite{Lutz16FlagComplexes}.

In contrast, the subgraph of the Pachner graph of $n$-vertex stacked $2$-spheres has much less uniform properties. In Section~\ref{sec:stacked} we give a precise condition on when exactly an edge flip of a stacked $2$-sphere produces another stacked $2$-sphere (Theorem~\ref{theo:bsm-s2s}). Using this result, we prove that the Pachner graph of $n$-vertex stacked $2$-spheres is not connected, and that there are at least as many connected components as there are trees on $\lfloor\frac{n-5}{3}\rfloor$ nodes and with degrees of nodes at most four. In particular, the number of connected components of the Pachner graph of $n$-vertex stacked $2$-spheres is exponential in $n$ (Corollary~\ref{coro:cor4}). Furthermore, we show that a pair of $n$-vertex stacked $2$-spheres can be connected by a sequence of $n$-vertex stacked $2$-spheres, each related to the previous one by an edge flip, if their associated stacked $3$-balls have a dual graph without degree four vertices (Theorem~\ref{thm:stackedcomp}). These results are complemented by additional experimental data for $n \leq 14$ vertices (Table~\ref{fig:stackedPachner}).

\medskip

Altogether, the results contained in this paper together with existing results on the flip graph discussed above allow us to draw a relatively precise map of the flip graph of $n$-vertex triangulated $2$-spheres. Having more knowledge about the structural properties of the flip graph might be one key for challenging future endeavours such as sampling triangulated $2$-spheres or even generating triangulated $2$-spheres with certain properties under some conditions of randomness.

For a graphical summary of what is known about the flip graph at present see Figure~\ref{fig:map}.

\begin{landscape}

\begin{figure}[htb]
 \centerline{\includegraphics[width=1.5\textwidth]{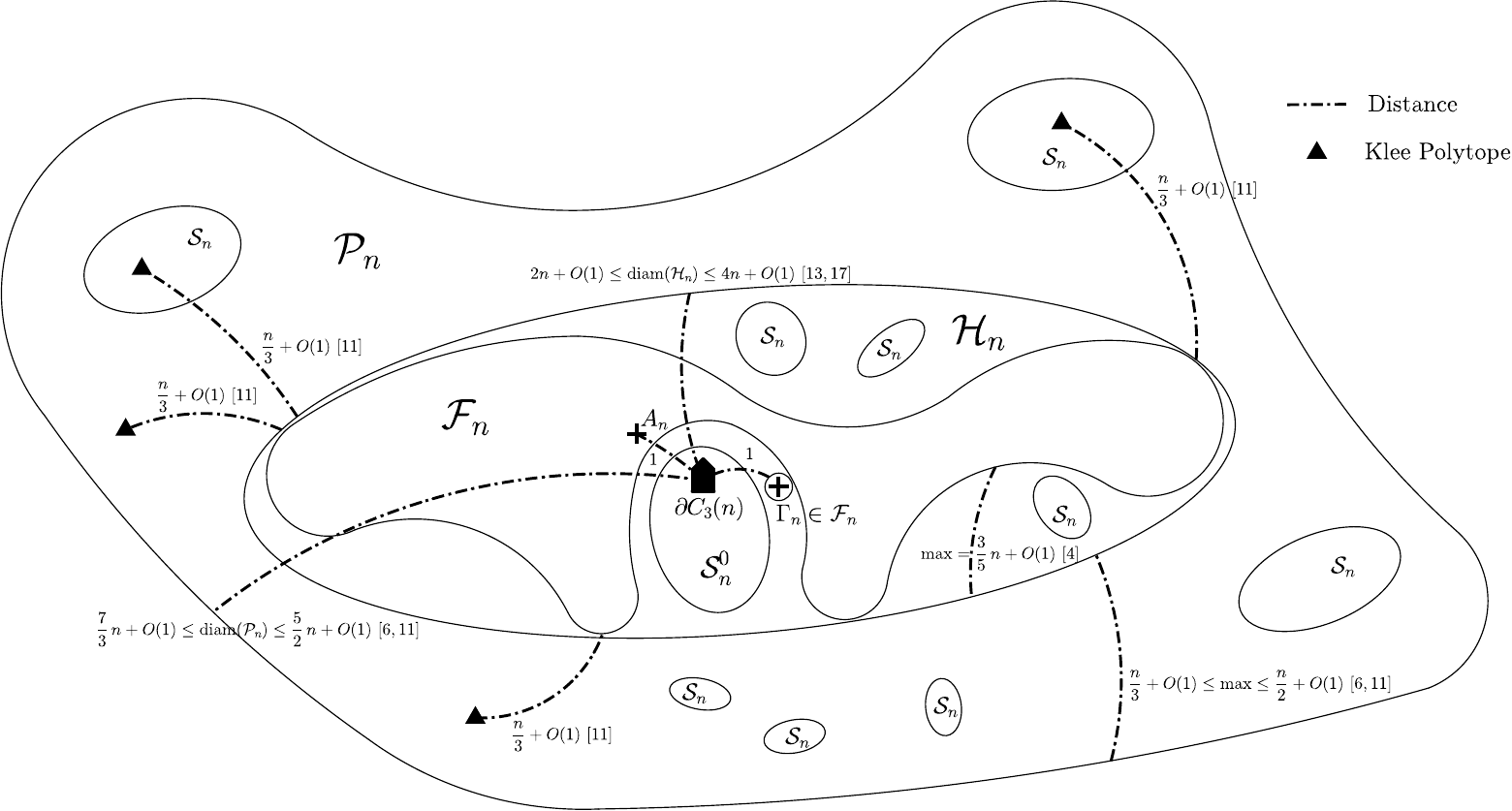}}
  \caption{Map of the flip graph $\mathcal{P}_n$ of $n$-vertex $2$-spheres. For notations see Sections~\ref{sec:prelims}, \ref{sec:flag} and \ref{sec:stacked}. The containment of $\mathcal{S}_n^0 \subset \mathcal{H}_n$ is due to a simple but unpublished argument which is left as an exercise to the interested reader. A {\em Klee polytope} is a triangulated $2$-sphere $S'$ obtained from a triangulated $2$-sphere $S$ by stellarly subdividing each triangle of $S$ once.  \label{fig:map}}
\end{figure}

\end{landscape}

\subsection*{Acknowledgement}

This work was supported by DIICCSRTE, Australia (project AISRF06660) and DST, India (DST/INT/AUS/P-56/2013(G)), under the Australia-India Strategic Research Fund.
The second author was also supported by SERB, DST (Grant No. MTR/2017/ 000410).
The third author thanks DIICCSRTE for a postdoctoral fellowship. In addition, for parts of this project the third author was supported by the Einstein Foundation (project ``Einstein Visiting Fellow Santos'').

\section{Preliminaries} \label{sec:prelims}

\subsection{Triangulations of $2$-spheres}
\label{ssec:trigs}

A {\em triangulation of the $2$-sphere}, sometimes also referred to as a {\em planar triangulation}, is an $n$-vertex graph embedded in the $2$-sphere with $3n-6$ edges for some $n\geq 4$. As a direct result, the embedding decomposes the $2$-sphere into $2n-4$ triangles. This graph together with the triangles is called a {\em triangulated $2$-sphere}. The graph is also called the {\em edge graph} of the triangulated $2$-sphere.
The simplest example of a triangulated $2$-sphere is the boundary of the tetrahedron, called the {\em standard $2$-sphere}.

Every $n$-vertex triangulated $2$-sphere can be identified with an {\em abstract simplicial complex}, that is, a set of subsets of a finite ground set $V$, called {\em faces}, closed under taking subsets. For this, label its vertices with the elements of $V=\{ 1, \ldots,  n \}$ and represent triangles, edges and vertices by subsets of $V$ of cardinality three, two and one respectively. Note that, for the purpose of this article, we sometimes do not make the distinction between vertices of an abstract simplicial complex and elements of its ground set.

We say that two triangulated $2$-spheres are {\em combinatorially isomorphic}, or just {\em isomorphic} for short, if their respective abstract simplicial complexes are equal possibly after relabeling the elements of the ground set. In this article, whenever we talk about triangulated $2$-spheres we mean their corresponding isomorphism classes of abstract simplicial complexes.
By a theorem of Steinitz \cite{Steinitz22Realisation}, isomorphism types of triangulated $2$-spheres are in one-to-one correspondence with isomorphism types of simplicial $3$-polytopes. A fact which does not generalise to higher dimensions \cite{Barnette73NonPolytopalSphere,Gruenbaum67Enum8vtxSpheres}.

Given a triangulated $2$-sphere $S$, we usually denote its set of vertices, edges and triangles by $V(S)$, $E(S)$ and $F(S)$ respectively. Analogous notation is used for arbitrary abstract simplicial complexes. For $v \in V(S)$, its {\em star} $\str_S (v)$ is the simplicial complex generated by all triangles in $F(S)$ containing $v$. The edges and vertices of $\str_S (v)$ not containing $v$ (i.e., the boundary of $\str_S (v)$) constitute the {\em link} of $v$ in $S$, written $\link_S (v)$. The star and the link of an arbitrary face of an arbitrary abstract simplicial complex are defined analogously. The number of edges containing $v$ is called the {\em degree} of $v$, written $\deg_S (v)$.

For a triangulated $2$-sphere $S$ on ground set $V$ and $W \subseteq V$, the {\em subcomplex induced by $W$}, denoted $S[W]$, is the simplicial complex of all triangles, edges and vertices of $S$ entirely contained in $W$. Induced subcomplexes on arbitrary abstract simplicial complexes are defined analogously. In the special case of a graph $G=(V,E)$ and one of its vertices $v \in V$, the induced subgraph $G[V\setminus\{v\}]$ is referred to as the {\em vertex-deleted subgraph $G-v$}.

\subsection{Flag and Hamiltonian $2$-spheres}
\label{ssec:flag}

There are several special types of triangulated $2$-spheres which are relevant for this article. The most important ones are introduced in this section and in Section~\ref{ssec:stacked}.

\begin{definition}[Flag $2$-sphere]
	A {\em flag $2$-sphere} is a triangulated $2$-sphere in which all minimal non-faces of the underlying simplicial complex are of size two. Equivalently, a flag $2$-sphere is a triangulated $2$-sphere distinct from the standard $2$-sphere, in which every $3$-cycle (i.e., cycle of three edges) bounds a triangle.
\end{definition}

Every triangulated $2$-sphere $S$ can be decomposed into a collection of flag $2$-spheres and standard $2$-spheres: Simply cut along a $3$-cycle not bounding a triangle, and fill in the missing triangle in both parts. Iterating this procedure results in a set of spheres called the primitive components of $S$. Identifying each one of them by a node, and the $3$-cycles by arcs between nodes this defines a tree. If the tree is a single vertex, $S$ is called {\em primitive}. A triangulated $2$-sphere is called {\em $4$-connected} if its edge graph is $4$-connected. A triangulated $2$-sphere distinct from the standard $2$-sphere is $4$-connected if and only if it is primitive if and only if it is flag.

\begin{definition}[Hamiltonian $2$-sphere]
	A {\em Hamiltonian $2$-sphere} is a triangulated $2$-sphere containing a Hamiltonian cycle in its edge graph.
\end{definition}

Hamiltonian $2$-spheres play an important role in the proofs of upper bounds for the diameter of the Pachner graph of $n$-vertex triangulated $2$-spheres for a fixed $n$, see \cite{Bose11FlipGraphSurvey} for an overview. This is due to (i) the well-behaved structure of the Pachner graph of $n$-vertex Hamiltonian $2$-spheres which admits relatively precise bounds on its diameter, see Theorem~\ref{thm:mori}, and (ii) the fact that a flag $2$-sphere is necessarily Hamiltonian \cite{Whitney31Planar4ConnGraphsAreHamiltonian}. The converse of (ii) is not true.

\subsection{Stacked $3$-balls and stacked $2$-spheres}
\label{ssec:stacked}

A {\em triangulated $3$-ball} is a collection of tetrahedra (together with their faces) whose union is a topological $3$-ball. If $B$ is a triangulated $3$-ball then its {\em boundary} $\partial B$ is the complex generated by all triangles of $B$ contained in only one tetrahedron of $B$. By the {\em standard $3$-ball} we mean a single tetrahedron together with its faces. The boundary of the standard $3$-ball is the standard $2$-sphere.

A triangulated $3$-ball $B$ is called a {\em stacked $3$-ball} if there is a sequence $B_1, \dots, B_m$ of triangulated $3$-balls such that $B_1$ is the standard $3$-ball, $B_m=B$ and, for $2\leq i\leq m$, $B_i$ is constructed from $B_{i-1}$ by gluing (or stacking) a standard $3$-ball onto a single triangle of $B_{i-1}$. Note that, by construction, all edges and vertices of $B$ are contained in $\partial B$. 

Conversely, let $B$ be a triangulated  $3$-ball with all of its edges and vertices in $\partial B$. If $t$ is an interior triangle in $B$ then the boundary of $t$ is a $3$-cycle in $\partial B$ (i.e., an induced $3$-cycle in $\partial B$). Since $B$ is a union of tetrahedra, by Lemma \ref{coro:interiorface} below, $B$ is the union of two smaller $3$-balls $B_1$ and $B_2$ glued together along $t$
and all the edges and vertices of $B_i$ are in $\partial B_i$ for $i=1, 2$. Inductively, this shows that $B$ is a stacked $3$-ball. (See \cite[Theorem 4.5]{DattaMurai14StackedTriangulations} for a more general result with a rigorous proof.)
A {\em stacked $2$-sphere} is a triangulated $2$-sphere isomorphic to the boundary of a stacked $3$-ball. It follows from the definition of a stacked ball that an $n$-vertex stacked $2$-sphere contains exactly $n-4$ induced $3$-cycles.

For an abstract simplicial complex $C$ whose faces consist of tetrahedra and their subfaces, the graph whose nodes correspond to the tetrahedra of $C$ and two nodes are connected by an arc if and only if their corresponding tetrahedra share a triangle is called the {\em dual graph} of $C$, denoted by $\Lambda(C)$. If $B$ is a stacked $3$-ball then $\Lambda(B)$ is a tree, and every node of $\Lambda (B)$ corresponds to a primitive component of the bounding stacked $2$-sphere $\partial B$. It follows that a triangulated $2$-sphere is stacked if and only if all of its primitive components are standard $2$-spheres.

%

The following lemma is a corollary of \cite[Lemma 3.4]{DattaMurai14StackedTriangulations} which is proved for arbitrary dimension and in the more general setting of homology balls.

\begin{lemma} \label{coro:interiorface}
Let $B$ be a stacked $3$-ball. If $t$ is an interior triangle of $B$ then the induced complex $B[V(B)\setminus t]$ has exactly two connected components. Moreover, if $u$ and $v$ are the two apices of tetrahedra of $B$ containing $t$, then $u$ and $v$ are in different components of $B[V(B)\setminus t]$.
\end{lemma}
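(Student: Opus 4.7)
The plan is to induct on the number $m$ of tetrahedra in $B$, exploiting the recursive definition of a stacked $3$-ball. When $m=1$ there are no interior triangles, so the statement is vacuously true. Otherwise, write $B = B' \cup \sigma$, where $B'$ is a stacked $3$-ball on $m-1$ tetrahedra and $\sigma$ is a tetrahedron stacked onto a boundary triangle $t^\ast$ of $B'$ along a brand-new apex vertex $w$.

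Two cases arise. If $t \neq t^\ast$, then $t$ is already interior in $B'$ and both tetrahedra of $B$ containing $t$ lie in $B'$, so their apices $u,v$ lie in $V(B')$. By the inductive hypothesis, $B'[V(B')\setminus V(t)]$ has exactly two connected components with $u$ and $v$ separated. Passing to $B[V(B)\setminus V(t)]$ the only new vertex is $w$, and the only new edges incident to $w$ run to vertices of $t^\ast$. Since $t^\ast \neq t$ and both have three vertices, $V(t^\ast)\setminus V(t)$ is nonempty; the surviving vertices of $t^\ast$ are pairwise joined through edges of $t^\ast$ inside $B[V(B)\setminus V(t)]$, so they all sit in a single component of $B'[V(B')\setminus V(t)]$, and $w$ simply attaches to that component. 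Both the count and the separation of $u,v$ are preserved.

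If instead $t = t^\ast$, then one apex, say $v$, equals $w$, while the other apex $u$ lies in $V(B')$. Every face of $B$ containing $w$ is a subset of $\sigma = V(t)\cup\{w\}$, so in $B[V(B)\setminus V(t)]$ the vertex $w$ is isolated and forms its own component, and the rest of the induced subcomplex coincides with $B'[V(B')\setminus V(t)]$. The problem therefore reduces to the following auxiliary lemma: if $B'$ is a stacked $3$-ball and $t$ is a boundary triangle of $B'$, then $B'[V(B')\setminus V(t)]$ is connected and contains the apex $u$ of the unique tetrahedron of $B'$ incident to $t$. I would prove this by a parallel induction: in the step, pick any leaf $\sigma'$ of the dual tree $\Lambda(B')$ different from the tetrahedron $\tau$ containing $t$ (such a leaf exists because every tree on two or more nodes has at least two leaves), peel off $\sigma'$ to obtain a smaller stacked ball $B''$ still containing $\tau$, and invoke the inductive hypothesis on $B''$. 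The new apex of $\sigma'$ is attached along a gluing triangle different from $t$ (otherwise $t$ would be interior in $B'$), so it hooks into the connected subcomplex $B''[V(B'')\setminus V(t)]$ through a surviving vertex of that gluing triangle.

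The only real obstacle is to ensure, at every peeling or stacking step, that the relevant gluing triangle genuinely has a vertex outside $V(t)$. This is handled by two elementary observations: two distinct triangles of a simplicial complex cannot share the same three-element vertex set, and $t$ remains a boundary triangle of each subball involved (in the main induction because $t \neq t^\ast$; in the auxiliary induction because $t$ is a boundary triangle of $B'$ and peeling off a leaf distinct from $\tau$ does not affect incidences at $t$). With this verified, the inductive bookkeeping in both inductions is routine.
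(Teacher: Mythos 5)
The paper does not actually prove this lemma: it simply cites it as a corollary of Lemma~3.4 of \cite{DattaMurai14StackedTriangulations}, which is stated for arbitrary dimension and for homology balls. Your proposal therefore takes a genuinely different route --- it supplies a self-contained, elementary argument in place of the external citation. Your overall strategy (double induction on the number of tetrahedra, using the recursive definition of a stacked $3$-ball and the fact that removing a leaf of the dual tree yields a smaller stacked $3$-ball) is correct and all the steps check out: in Case~1 the new apex $w$ attaches to the single component of $B'[V(B')\setminus V(t)]$ containing the surviving vertices of $t^\ast$; in Case~2 the new apex is isolated and the problem reduces to your auxiliary lemma; and in the auxiliary lemma, peeling a leaf $\sigma'\neq\tau$ always attaches the peeled apex through a surviving vertex of its gluing triangle $s$ because $s$ is interior to $B'$ while $t$ is boundary, so $s\neq t$. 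The one hypothesis you use implicitly and should state is that the apex $w'$ of a leaf $\sigma'$ is contained in no other tetrahedron of $B'$, so that $V(B'')=V(B')\setminus\{w'\}$; this follows from the vertex/tetrahedron count for stacked balls ($m$ tetrahedra iff $m+3$ vertices), and in particular guarantees $w'\notin V(t)\subset V(\tau)$. What your approach buys is a reader-accessible proof tailored to dimension $3$; what the paper's citation buys is generality (arbitrary dimension, homology balls) and economy. Either is legitimate; if written out in full, your version would make the paper more self-contained.
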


From \cite[Lemma 4.6 and Remark 4.1]{Bagchi08LBTNormPseudoMnf} we know the following statement.

\begin{lemma}[Bagchi, Datta \cite{Bagchi08LBTNormPseudoMnf}] \label{prop:bd-lbt}
Let $S$ be a stacked $2$-sphere with edge graph $G$. Let $\SB$ denote the simplicial complex whose faces are all the cliques of $G$. Then $\SB$ is a stacked $3$-ball and $S = \partial \SB$. Moreover, up to isomorphism, $\SB$ is the unique stacked $3$-ball such that $S = \partial \SB$.
\end{lemma}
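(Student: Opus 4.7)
The plan is to prove existence and uniqueness simultaneously by induction on the number $m$ of tetrahedra of a stacked $3$-ball $B$ with $\partial B = S$ (such a $B$ exists by the definition of a stacked $2$-sphere). I aim to establish the stronger claim that, as abstract simplicial complexes, $\SB = B$. Because all edges and vertices of any stacked $3$-ball lie in its boundary, this claim immediately yields uniqueness: any stacked $3$-ball $B'$ with $\partial B' = S$ has the same edge graph $G$ as $S$, and the identical argument gives $B' = \SB$.

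For the base case $m = 1$, $S$ is the standard $2$-sphere, its edge graph is $K_4$, and the clique complex of $K_4$ is the tetrahedron $B$. For the inductive step, fix a stacking sequence $B_1, \ldots, B_m = B$, and let $\sigma$ be the last tetrahedron, stacked onto $B_{m-1}$ along a triangle $t = \{a,b,c\}$; write $u$ for the apex of $\sigma$, a new vertex not appearing in $B_{m-1}$. Set $S_{m-1} := \partial B_{m-1}$, with edge graph $G_{m-1}$; by the inductive hypothesis, $\SB_{m-1} = B_{m-1}$. The vertex $u$ has degree exactly $3$ in $S$ with neighbours $a,b,c$, so $G$ arises from $G_{m-1}$ by adjoining $u$ together with the three edges $\{u,a\}, \{u,b\}, \{u,c\}$.

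The cliques of $G$ split accordingly. A clique $K$ not containing $u$ is a clique of $G_{m-1}$, hence a face of $B_{m-1} \subseteq B$ by the inductive hypothesis. A clique $K$ containing $u$ is necessarily a subset of $\{u,a,b,c\} = \sigma$, since $u$'s only neighbours in $G$ are $a,b,c$, so $K$ is a face of $\sigma \subseteq B$. Conversely, every face of $B$ is a clique in its $1$-skeleton $G$. Therefore $\SB = B$, which proves existence of a stacked $3$-ball $\SB$ with boundary $S$, and by the observation above also its uniqueness.

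The only mild obstacle is structural: one needs the existence of a leaf of the dual tree $\Lambda(B)$ in order to isolate a ``last'' stacking step, ensuring that the corresponding apex $u$ has exactly three neighbours in $G$. This is immediate since $\Lambda(B)$ is a tree with at least two nodes whenever $m \geq 2$. Once this is in place, the crisp observation driving the proof is that adjoining $u$ to $G_{m-1}$ enlarges the clique complex by precisely $\sigma$ and its faces, and by nothing more, exactly because $\deg_G(u) = 3$.
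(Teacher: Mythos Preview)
The paper does not give its own proof of this lemma; it is quoted verbatim from Bagchi and Datta (Lemma~4.6 and Remark~4.1 of \cite{Bagchi08LBTNormPseudoMnf}), so there is no in-paper argument to compare against. Your inductive proof is correct and self-contained. Peeling off the last tetrahedron $\sigma$ of a stacking sequence gives an apex $u$ of degree exactly three in $G$, and the decisive observation---that every clique of $G$ through $u$ lies inside $\{u,a,b,c\}$ while every clique avoiding $u$ is already a clique of $G_{m-1}$---pins down the clique complex as $B_{m-1}\cup\sigma = B$ on the nose. Uniqueness then falls out immediately, since any other stacked ball $B'$ with $\partial B' = S$ has $1$-skeleton $G$ (all edges of a stacked ball lie in its boundary) and the same induction forces $B' = \SB$ as well. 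Your closing remark about needing a leaf of $\Lambda(B)$ is slightly beside the point: the stacking sequence already hands you a last tetrahedron, which is automatically a leaf; no separate appeal to the tree structure is required.
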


\subsection{Bistellar moves}
\label{ssec:bistellar}

Bistellar moves are local combinatorial alterations of a simplicial complex which, in general, change the isomorphism type of the complex, but not the topology of the underlying space. For a triangulated $2$-sphere $S$ there are the following two bistellar moves to consider (see also Figure~\ref{fig:pachner2}).

\begin{itemize}
  \item Replace a triangle of $S$ by three triangles joined around a new vertex. Such a stellar subdivision of a triangle is also called a {\em $0$-move} (because a $0$-dimensional face is inserted) or {\em $1\mbox{-}3$-move} (because one triangle is replaced by three triangles). For its inverse operation, a so-called {\em $2$-move} (a $2$-dimensional face is inserted) or  {\em $3\mbox{-}1$-move} (three triangles are replaced by one), remove the vertex star of a vertex of degree three and replace it by a single triangle. This inverse operation is only possible if the new triangle is not already present in the triangulation. In particular, the standard $2$-sphere does not allow any $2$-moves.

  \item Replace two triangles of $S$ which are joined along a common edge, say $abx$ and $aby$, and replace them with triangles $axy$, $bxy$. This operation is possible if and only if $xy$ is not an edge of $S$. This move is called a {\em $1$-move}, {\em $2\mbox{-}2$-move}, or, for obvious reasons, an {\em edge flip}. Throughout this article we denote it by $ab \mapsto xy$. The inverse of an edge flip is again an edge flip.
\end{itemize}

\begin{figure}[!h]
  \begin{center}
   \includegraphics[width=0.75\textwidth]{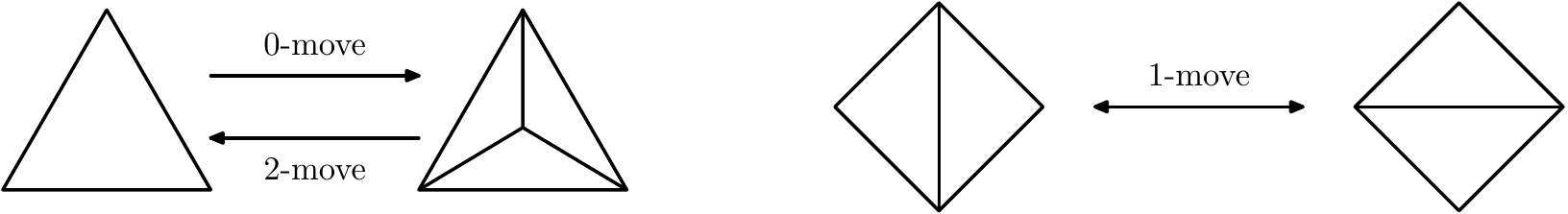}
    \caption{The bistellar moves in dimension two. \label{fig:pachner2}}
  \end{center}
\end{figure}

\begin{definition}
	\label{def:pg}
	The {\em Pachner graph} $\mathcal{P}$ of triangulated $2$-spheres is the graph whose nodes are triangulated $2$-spheres up to combinatorial isomorphism, with arcs between all pairs of triangulated $2$-spheres that can be transformed into isomorphic copies of each other by a single bistellar move.
\end{definition}

Note that it is a fundamental and well-known fact that the Pachner graph $\mathcal{P}$ of  triangulated $2$-spheres is connected (see for example \cite{Pachner87KonstrMethKombHomeo} for a much more general statement due to Pachner).

We denote the Pachner graph of all triangulated $2$-spheres with $n$ vertices by $\mathcal{P}_n$. Note that all edges in $\mathcal{P}_n$ correspond to edge flips.

The Pachner graph of $n$-vertex flag $2$-spheres is denoted by $\mathcal{F}_n$, the Pachner graph of $n$-vertex Hamiltonian $2$-spheres by $\mathcal{H}_n$, and the Pachner graph of  $n$-vertex stacked $2$-spheres by $\mathcal{S}_n$. Note that, naturally, all of these graphs are induced subgraphs in the Pachner graph $\mathcal{P}_n$ of $n$-vertex $2$-spheres. In particular, a priori it is not clear, whether or not any of them is connected. The following statement is due to work by Mori, Nakamoto and Ota.

\begin{theorem}[Theorem 5 of \cite{Komuro97DiameterOfFlipGraph}, Theorem 1 of \cite{Mori03DiagonalFlips}]
	\label{thm:mori}
For $n\geq 5$, the Pachner graph $\mathcal{H}_n$ is connected and of diameter at least $2n-15$ and at most $4n-20$.
\end{theorem}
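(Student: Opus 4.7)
The plan is to view a Hamiltonian $2$-sphere as a pair of triangulated $n$-gons (\emph{disks}) glued along their common boundary, namely the Hamiltonian cycle $C = v_1 v_2 \cdots v_n v_1$, and to exploit the classical flip theory of polygon triangulations. An edge flip whose flipped edge is a diagonal of one disk (and hence not in $C$) leaves $C$ intact, so the resulting $2$-sphere remains Hamiltonian provided the newly introduced diagonal is not already present as a diagonal of the other disk. This reduces the theorem largely to the polygon setting, modulo a compatibility condition between the two sides.

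For the upper bound $4n - 20$, I would target the \emph{double fan} in which one disk is the fan at $v_1$ (with diagonals $v_1 v_k$ for $3 \leq k \leq n-1$) and the other is the fan at $v_2$ (with diagonals $v_2 v_k$ for $4 \leq k \leq n$). These two sets of diagonals are disjoint, so the double fan is a legitimate Hamiltonian $2$-sphere. By the Sleator--Tarjan--Thurston theorem, each of the two polygon triangulations can be transformed into its target fan using at most $2n - 10$ flips; summing over the two disks yields the bound $4n - 20$, and connectedness of $\mathcal{H}_n$ follows immediately. For the lower bound $2n - 15$, I would exhibit an explicit pair of Hamiltonian $2$-spheres and bound their flip distance from below by a monovariant that changes by at most a constant per flip --- for example, the cardinality of the symmetric difference of the diagonal sets on a fixed side of the Hamiltonian cycle, which changes by at most one with each flip.

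The principal obstacle is the compatibility condition between the two sides: a flip inside one disk that would introduce an edge already present as a diagonal of the other disk is forbidden, and so the two polygon-flip sequences cannot be executed entirely independently. I would resolve this by interleaving the two sequences and, whenever a blocking diagonal is encountered, first flipping it away on the opposite side before continuing. The choice of $v_1$ and $v_2$ as adjacent cycle vertices is crucial here: it makes the two target fans disjoint as sets of diagonals and leaves enough freedom in the flip schedule that the total count remains within the $4n - 20$ bound. Verifying that this interleaving can always be arranged without inflating the flip count is the most delicate part of the argument.
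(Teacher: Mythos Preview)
The paper does not prove this theorem at all: it is quoted from the literature (Komuro for the lower bound, Mori--Nakamoto--Ota for the upper bound and connectedness) and used as background. There is therefore no ``paper's own proof'' to compare against; your sketch is an attempt to reconstruct arguments that the present paper simply cites.

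As for the sketch itself, the overall picture---splitting along the Hamiltonian cycle into two triangulated $n$-gons and reducing to polygon flip theory---is indeed the framework of Mori--Nakamoto--Ota. Two points deserve care. First, the per-side count is not quite as you state it: transforming a single triangulation of a convex $n$-gon to a fixed fan costs at most $n-3$ flips, not $2n-10$ (the latter is the Sleator--Tarjan--Thurston diameter bound between \emph{two arbitrary} polygon triangulations). The $4n-20$ arises not from $2\times(2n-10)$ but from a more careful accounting that absorbs the extra flips needed to resolve the blocking phenomenon you identify; simply interleaving and ``flipping away'' obstructions can in principle cost additional moves, and one has to show the total stays at $4n-20$. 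Second, your proposed lower-bound monovariant is tied to a \emph{fixed} Hamiltonian cycle, but isomorphism classes of Hamiltonian $2$-spheres generally admit several Hamiltonian cycles, and the symmetric difference of diagonal sets is not invariant under this choice; Komuro's argument instead bounds flip distance in all of $\mathcal{P}_n$ via a degree-based potential and then observes that the extremal pair lies in $\mathcal{H}_n$.
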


In this article, we focus on structural properties of $\mathcal{F}_n $ and $\mathcal{S}_n$.

\section{The Pachner graph  \boldmath{$\mathcal{F}_n$} of \boldmath{$n$}-vertex flag $2$-spheres} \label{sec:flag}

In this section we prove that, for $n \geq 8$, the Pachner graph $\mathcal{F}_n $ of  $n$-vertex flag $2$-spheres  contains exactly two components, one of them consisting of the double cone $\Gamma_n$, the other one containing all other $n$-vertex flag $2$-spheres. Throughout this section we write $T \sim T'$ for two $n$-vertex flag $2$-spheres meaning that there exists a sequence of edge flips connecting $T$ and $T'$ preserving the flagness property at each step. We prove the following statement.

\begin{theorem}  \label{thm:pachnerGraph}
If $T$ and $T'$ are two $n$-vertex flag $2$-spheres distinct from $\Gamma_n$, then $T \sim T'$.
\end{theorem}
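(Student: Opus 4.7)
The plan is to fix a canonical $n$-vertex flag $2$-sphere $F_n$ and show that every flag $2$-sphere $T\ne \Gamma_n$ can be transformed into $F_n$ via a sequence of flag-preserving edge flips. The conclusion $T\sim T'$ then follows from the chain $T\sim F_n\sim T'$.

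The first step is to record a clean local criterion for when an edge flip preserves flagness. Let $ab$ be an edge incident to triangles $abx$ and $aby$. The flip $ab\mapsto xy$ is defined iff $xy\notin E(T)$, and I claim it preserves flagness precisely when \emph{(i)} $\deg_T(a),\deg_T(b)\ge 5$ (otherwise the flip creates a degree-$3$ vertex, whose link $3$-cycle is then an empty induced $3$-cycle), and \emph{(ii)} $x$ and $y$ have no common neighbour in $T$ other than $a,b$ (otherwise such a neighbour $z$ together with the new edge $xy$ yields an induced $3$-cycle $xyz$ that bounds no triangle). Both implications are straightforward to verify using Lemma-style reasoning about links in $2$-spheres, and condition (ii) amounts to saying that the $4$-cycle $a,x,b,y$ is induced.

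Next I would pin down $F_n$ explicitly and check it is flag: a convenient choice is an $n$-vertex sphere containing a single vertex of high degree whose link is long enough to absorb arbitrary rotations via flag-preserving flips, for instance obtained by starting from the octahedron and iteratively splitting certain equatorial edges in a way that never produces an induced empty $3$-cycle. With $F_n$ fixed, the heart of the proof is a reduction lemma: given any flag $T\ne \Gamma_n$, there exists a flag-preserving flip of $T$ that strictly decreases a chosen complexity measure with respect to $F_n$ (say the lexicographically ordered degree sequence, or the number of vertices at which $T$ and $F_n$ differ). A useful structural first observation for this is that $\Gamma_n$ is the only flag sphere in which every edge is incident to a degree-$4$ vertex, so for $T\ne \Gamma_n$ there exists an edge $ab$ with $\deg(a),\deg(b)\ge 5$; a case analysis around $ab$, using flagness to forbid many of the common-neighbour configurations that could spoil condition (ii), should produce the desired admissible flip and locate it so that progress toward $F_n$ is made.

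The main obstacle will be this reduction lemma, specifically ruling out triangulations in which every edge between two vertices of degree $\ge 5$ has an offending extra common neighbour spoiling condition (ii). My plan is a propagation argument: I would show that if \emph{every} such edge is blocked, then flagness together with the local obstruction forces the surrounding combinatorics, patch by patch, into a cone-over-polygon structure, and iterating this forces $T\cong \Gamma_n$, contradicting the hypothesis. Finally, small-$n$ base cases (roughly $n\le 10$) are likely to require a short explicit finite verification rather than the general argument, since there are few flag spheres on those vertex counts and the reduction lemma has very little room to manoeuvre.
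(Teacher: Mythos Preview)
Your local criterion for a flag-preserving flip is correct, and the overall plan of reducing every $T\ne\Gamma_n$ to a fixed target is also what the paper does. The gap is that your entire argument is deferred to the ``reduction lemma'', and what you have written there is not a proof but a hope: you do not commit to a complexity measure (you float two), and for neither do you argue that some admissible flip always strictly decreases it. A single edge flip alters four vertex degrees by $\pm 1$ each, and there is no evident reason one can always arrange lexicographic progress toward a prescribed degree sequence while simultaneously respecting conditions (i) and (ii); the fact that $A_n$ has exactly one flag-preserving flip for $n\ge 9$ already shows how scarce admissible moves can be. Your ``propagation argument'' for the all-edges-blocked case is likewise only a statement of what would have to be shown, not a mechanism for showing it.

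The paper does not attempt a monotone-complexity scheme at all. It first finds an induced $4$-cycle with all four vertices of degree $\ge 5$ (Lemma~\ref{lem:induced}), splitting $T$ into two proper quadrilaterals; then, by an induction on the number of interior vertices that relies on several explicit, hand-checked flip sequences (the Transport Lemma~\ref{lem:transport}, the Merge Lemma~\ref{lem:red2}, and the resolution of the exceptional family $\mathcal{Q}_k$ in Lemma~\ref{lem:Qk}), it rearranges each quadrilateral so that every interior vertex has degree four (Lemma~\ref{lem:organise}); finally it transports interior vertices between the two ordered quadrilaterals to reach the concrete target $A_n$. Crucially, many of these flip sequences temporarily move \emph{away} from the target before making net progress, which is exactly why a greedy complexity-decrease argument of the kind you sketch is unlikely to go through without substantially more structure than you have supplied.
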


See Figures~\ref{fig:pachner8} to \ref{fig:pachner10} for illustrations of the Pachner graph $\mathcal{F}_n$ for $n \in \{ 8,9,10\}$.

\begin{figure}[!h]
  \begin{center}
    \includegraphics[width=0.2\textwidth]{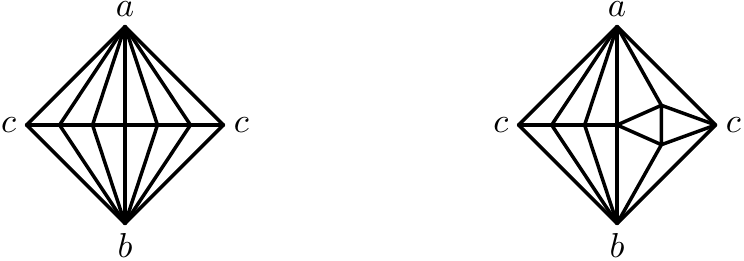}
    \caption{The Pachner graph $\mathcal{F}_8$ of $8$-vertex flag $2$-spheres. \label{fig:pachner8}}
  \end{center}
\end{figure}

\begin{figure}[!h]
  \begin{center}
    \includegraphics[width=.4\textwidth]{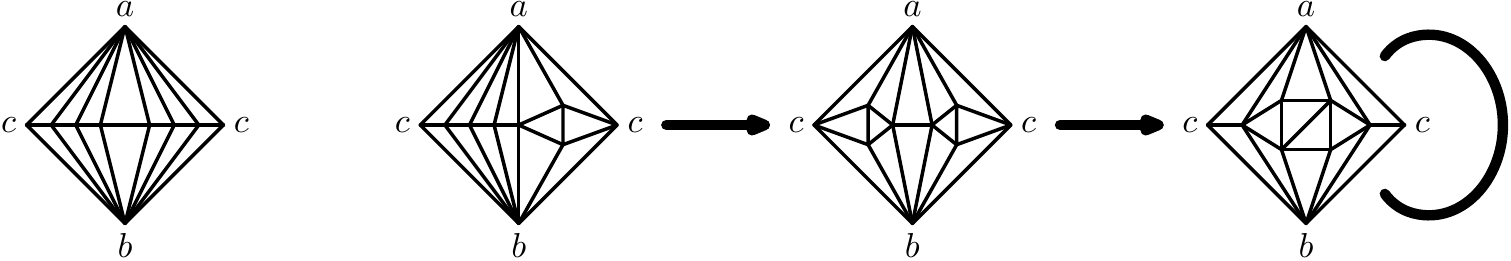}
    \caption{The Pachner graph $\mathcal{F}_9$ arranged
      left to right by decreasing separation indices, see \cite{Burton14SepIndex2Spheres}.
      \label{fig:pachner9}}
  \end{center}
\end{figure}

  \begin{figure}[!h]
    \begin{center}
      \includegraphics[width=\textwidth]{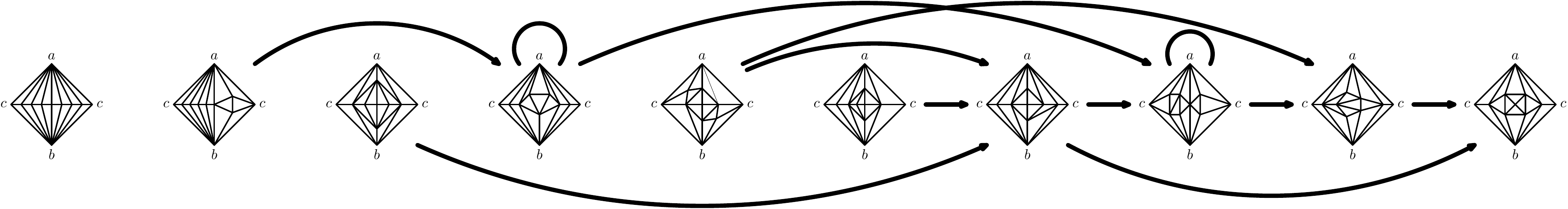}
      \caption{The Pachner graph $\mathcal{F}_{10}$ arranged
        left to right by decreasing separation indices, see \cite{Burton14SepIndex2Spheres}. \label{fig:pachner10}}
    \end{center}
  \end{figure}

The proof of Theorem~\ref{thm:pachnerGraph} relies on a number of lengthy and technical lemmas (Lemmas~\ref{lem:transport} to \ref{lem:organise}). We thus start by introducing all necessary terminology and a sketch of the proof, before proving all lemmas in detail.

\begin{definition}
	\label{def:proper}
    Let $T$ be a flag $2$-sphere. A subcomplex $Q$ of $T$ is called a {\em quadrilateral} if it is a triangulated disc and its boundary is a $4$-cycle. A quadrilateral $Q$ in $T$ with boundary $a\mbox{-}b\mbox{-}c\mbox{-}d\mbox{-}a$ is called {\em proper}, if $a\mbox{-}b\mbox{-}c\mbox{-}d\mbox{-}a$ is an induced cycle in $T$ and $\deg_T(a), \deg_T(b), \deg_T(c), \deg_T(d) \geq 5$. Since the boundary is an induced cycle, a proper quadrilateral contains at least one interior vertex. A quadrilateral $Q$ in $T$ is called {\em ordered}, if it contains an interior vertex, and all of its interior vertices are of degree four. Since an ordered quadrilateral is a subcomplex of a flag $2$-sphere, it follows that all the interior vertices lie on a path connecting diagonally opposite vertices of $Q$. We call this path a {\em diagonal path}, or just a {\em diagonal of $Q$}.
\end{definition}

\begin{definition}
	\label{def:An}
	For $n\geq 7$, let $A_n$ in $\mathcal{F}_n$ be as in Figure~\ref{fig:An}(b). Note that $A_7 = \Gamma_7$, $A_n \neq \Gamma_n$ for $n \geq 8$, and that $A_n$ is a vertex of degree one in $\mathcal{F}_n$ for $n\geq 9$.

	For $k \geq 3$, let $\mathcal{Q}_k$ be the triangulated quadrilateral with $k$ interior vertices shown in Figure~\ref{fig:An}(c). The path $a_0\mbox{-}a_1\mbox{-}\cdots\mbox{-}a_k$ is said to be the {\em diagonal path} of $\mathcal{Q}_k$.

 \begin{figure}[!h]
   \centerline{
      \includegraphics[height=3.8cm]{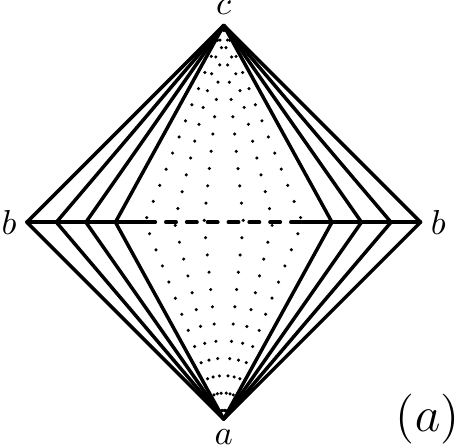} \hspace{1cm}
     \includegraphics[height=3.8cm]{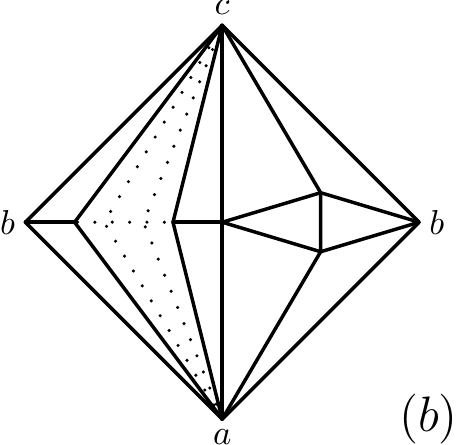} \hspace{1cm}
    \includegraphics[height=3.8cm]{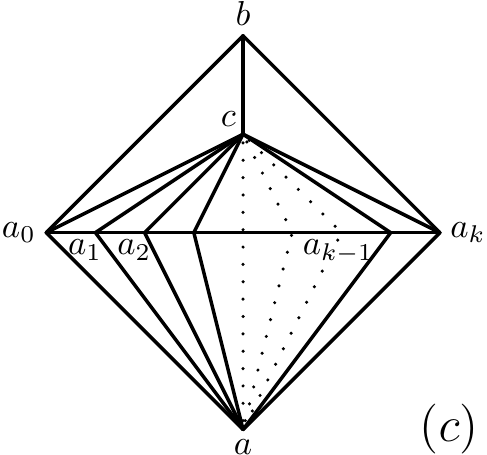}}
    \caption{(a) Double cone $\Gamma_n$ over the $(n-2)$-gon. (b) Target $n$-vertex flag $2$-sphere $A_n$. (c) Quadrilateral $\mathcal{Q}_k$ with boundary vertices $a_0, a, a_k, b$ and interior vertices $c$, $a_1, \ldots , a_{k-1}$.
       \label{fig:An}}
  \end{figure}
\end{definition}

We prove Theorem~\ref{thm:pachnerGraph} by showing that $T \sim A_n$, for any $n$-vertex flag $2$-sphere $T$ distinct from the double cone. For this, we split $T$ (or a slight variation thereof) along an induced $4$-cycle into two triangulated quadrilaterals $Q$ and $R$ using Lemma~\ref{lem:induced}. We then use Lemma~\ref{lem:organise} to turn all interior vertices of both $Q$ and $R$ into vertices of degree four. Finally, we use Lemma~\ref{lem:transport} to transport excess internal vertices from $R$ to $Q$ (or vice versa), until we obtain $A_n$.

The main difficulty in the above procedure is to prove Lemma~\ref{lem:organise}. For this we need Lemma~\ref{lem:red2}, which allows us to merge two smaller triangulated quadrilaterals, and Lemma~\ref{lem:Qk}, which allows us to resolve a pathological class of triangulations of the quadrilateral (triangulation $\mathcal{Q}_k$, shown in Figure~\ref{fig:An}(c)). In addition, all of Lemmas~\ref{lem:Qk}, \ref{lem:red2} and \ref{lem:organise} need Lemma~\ref{lem:transport} to transport internal vertices from one quadrilateral to another.

For a more precise but less descriptive outline, see the proof of Theorem~\ref{thm:pachnerGraph} at the end of this section.

\begin{lemma}[Transport Lemma]  \label{lem:transport}
Let $T$ be a flag $2$-sphere containing two ordered quadrilaterals $\alpha$ and $\beta$ with disjoint interiors, but a common boundary edge $vw$. Furthermore, let $k \geq 2$ ($\ell \geq 1$) be the number of interior vertices of $\alpha$ (resp., $\beta$), and let $v$ and $w$ satisfy one of the following conditions:
  \begin{enumerate}[(1)]
    \item $\deg_T(w) \geq 5$, and the diagonal paths of $\alpha$ and $\beta$ intersect in $w$;
    \item $\deg_T(v) \geq 5$, $\deg_T(w) \geq 6$,
      the diagonal path of $\alpha$ intersects $v$, and the diagonal path of $\beta$
      intersects $w$.
  \end{enumerate}

Then there exists a flag $2$-sphere $T'$ such that (i) $T \sim T'$, (ii) $T'$ contains two ordered quadrilaterals $\alpha'$ and $\beta'$, (iii) $T' = (T \setminus \{ \alpha , \beta \} ) \cup \{ \alpha' , \beta' \}$, (iv) $vw$ is a common edge of $\alpha'$ and $\beta'$ in $T'$, and (v) the number of interior vertices of $\alpha '$ is $k-1$, and the number of interior vertices of $\beta' $ is $\ell+1$.
\end{lemma}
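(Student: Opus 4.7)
The plan is to prove the Transport Lemma by exhibiting, in each of the two cases, an explicit sequence of four edge flips contained entirely within $\alpha \cup \beta$ which migrates a single interior vertex from $\alpha$ into $\beta$, and then verifying that every intermediate triangulation is still a flag $2$-sphere.

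For Case~(1) I fix the labelling so that the boundary of $\alpha$ is the $4$-cycle $w\mbox{-}v\mbox{-}w_\alpha\mbox{-}u_\alpha$ with diagonal path $w, x_1, \ldots, x_k, w_\alpha$, and the boundary of $\beta$ is $w\mbox{-}v\mbox{-}w_\beta\mbox{-}u_\beta$ with diagonal path $w, y_1, \ldots, y_\ell, w_\beta$; then the two triangles meeting at the shared edge $vw$ are $\{v,w,x_1\}$ and $\{v,w,y_1\}$. I then perform the following flips in order:
\begin{enumerate}[(i)]
\item $vw \mapsto x_1 y_1$,
\item $x_1 u_\alpha \mapsto w x_2$,
\item $w y_1 \mapsto x_1 u_\beta$,
\item $x_1 x_2 \mapsto v w$.
\end{enumerate}
A direct tracking of neighbourhoods shows that after step~(iv) one has $x_1$ adjacent to $\{w,v,y_1,u_\beta\}$ and $x_2$ adjacent to $\{w,v,u_\alpha,x_3\}$ (or $w_\alpha$ when $k=2$), while every other vertex interior to $\alpha \cup \beta$ is unaffected and the boundaries of $\alpha$ and $\beta$ are preserved. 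Hence $x_2$ takes over from $x_1$ as the new first interior vertex of $\alpha'$, and $x_1$ becomes the new first interior vertex of $\beta'$, realising the desired transport.

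Case~(2) is handled in exactly the same manner; the only change is that the two triangles along the edge $x_1 u_\alpha$ are now $\{v, u_\alpha, x_1\}$ and $\{u_\alpha, x_1, x_2\}$ rather than $\{w, u_\alpha, x_1\}$ and $\{u_\alpha, x_1, x_2\}$, so step~(ii) is replaced by $x_1 u_\alpha \mapsto v x_2$ while (i), (iii), (iv) remain unchanged. The strengthened hypothesis $\deg_T(w) \geq 6$ in Case~(2) is needed because $w$ loses two neighbours during the sequence (at~(i) and~(iii)) without an immediate compensating gain, whereas in Case~(1) the gain of $w x_2$ at step~(ii) keeps $\deg(w)$ safely above $3$ throughout.

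The main obstacle is verifying flagness at each of the four intermediate complexes. For every flip $e \mapsto e'$ this amounts to three checks: (a)~that $e'$ is not already an edge, which is routine since $e'$ always connects vertices lying on opposite sides of some currently-disjoint piece of the local structure; (b)~that every $3$-cycle through $e'$ is filled by one of the two new triangles, which reduces to enumerating the common neighbours of the endpoints of $e'$ in the current complex; and (c)~that no $3$-cycle previously bounded by a disappearing triangle survives, which is automatic since the flip removes the common edge $e$ of those triangles. The enumeration in (b) relies on the structural observation that the opposite corner pairs of $\alpha$ and $\beta$ (for instance $v u_\alpha, v u_\beta, w w_\alpha, w w_\beta$ in Case~(1)) cannot be edges of the flag sphere $T$: any such edge together with the $k$ or $\ell$ shared interior neighbours of the corresponding corners along the diagonal would force an empty $3$-cycle, contradicting flagness of $T$.
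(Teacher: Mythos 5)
Your Case~(1) flip sequence $vw \mapsto x_1 y_1$, $x_1 u_\alpha \mapsto w x_2$, $w y_1 \mapsto x_1 u_\beta$, $x_1 x_2 \mapsto vw$ is the paper's sequence (Figure~4, left), and your flagness checks (a)--(c) together with the observation that the opposite-corner pairs cannot be edges of $T$ correctly reproduce the degree argument the paper sketches; in particular the dip of $\deg(w)$ and $\deg(v)$ by one and the lateral corner $u_\alpha$ having degree $\geq 5$ by flagness are exactly right. So Case~(1) is sound and takes the same route as the paper.

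There is, however, a concrete inconsistency in your Case~(2). With the labelling you fixed at the outset --- boundary of $\alpha$ equal to the $4$-cycle $w$-$v$-$w_\alpha$-$u_\alpha$ --- the hypothesis of Condition~(2) forces the diagonal path of $\alpha$ to run from $v$ to its opposite corner $u_\alpha$, so the \emph{lateral} corners of $\alpha$ (the ones adjacent to all interior vertices) are $w$ and $w_\alpha$, not $u_\alpha$. Consequently $x_1$, the interior vertex next to $v$, has link $v$-$w$-$x_2$-$w_\alpha$, and $x_1 u_\alpha$ is \emph{not} an edge of $T$ whenever $k \geq 2$ (only $x_k$ meets $u_\alpha$). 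As written, the move ``step~(ii) is replaced by $x_1 u_\alpha \mapsto v x_2$'' is therefore applied to a nonexistent edge, and the claimed triangles $\{v, u_\alpha, x_1\}$, $\{u_\alpha, x_1, x_2\}$ are not faces. The intended flip is the analogous one across the lateral corner on the $\alpha$ side, namely $x_1 w_\alpha \mapsto v x_2$ (equivalently, relabel so that the corner adjacent to $v$ on the boundary is called $u_\alpha$). With that correction the sequence goes through: the degree of $w$ drops by two before being partially recovered (hence $\deg_T(w) \geq 6$), $v$ and the affected lateral corner of $\alpha$ drop by one (covered by $\deg_T(v) \geq 5$ and the flagness bound on lateral corners), and the final configuration has $\alpha'$ with diagonal $v$-$x_2$-$\cdots$-$w_\alpha$ and $\beta'$ with diagonal $w$-$x_1$-$y_1$-$\cdots$-$w_\beta$, giving exactly $k-1$ and $\ell+1$ interior vertices. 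This matches the right-hand side of the paper's Figure~4. So the argument is essentially the paper's, but you should repair the labelling in Case~(2) so that the second flip is performed across a lateral corner rather than the diagonal endpoint $u_\alpha$.
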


Lemma~\ref{lem:transport} gives precise conditions on when exactly we can
``transport'' an interior vertex of an ordered quadrilateral of $T$ into an adjacent ordered quadrilateral without changing anything else in $T$. Both Condition {\em (1)} and {\em (2)} for Lemma~\ref{lem:transport} are necessarily fulfilled as soon as $\alpha$ and $\beta$ only share one edge. If $\alpha$ and $\beta$ share two edges, the situation is different: In Condition {\em (1)} we can then have $\deg_T (w) =4$, in Condition {\em (2)} and for $k=2$ and $\ell=1$ we can have both $\deg_T(v) = 4$ and $\deg_T(w) = 5$.

\begin{proof}
Each ordered quadrilateral of $T$ must be subdivided by a diagonal path containing all of its interior vertices all of which are of degree four. Hence, up to exchanging the roles of $v$ and $w$, there are two possible initial configurations to consider: The diagonal paths of $\alpha$ and $\beta$ either meet, or one ends in $v$ and the other in $w$. The former corresponds to Condition {\em (1)} of the Lemma, the latter one to Condition {\em (2)}

  \begin{figure}[!h]
    \begin{center}
      \includegraphics[width=.225\textwidth]{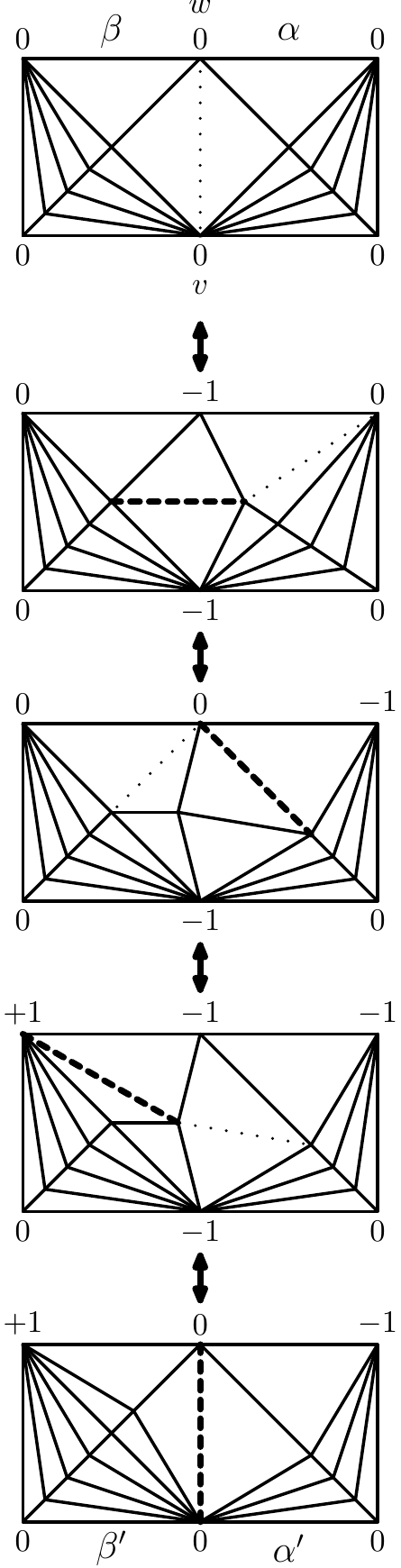} \hspace{2.5cm}
      \includegraphics[width=.225\textwidth]{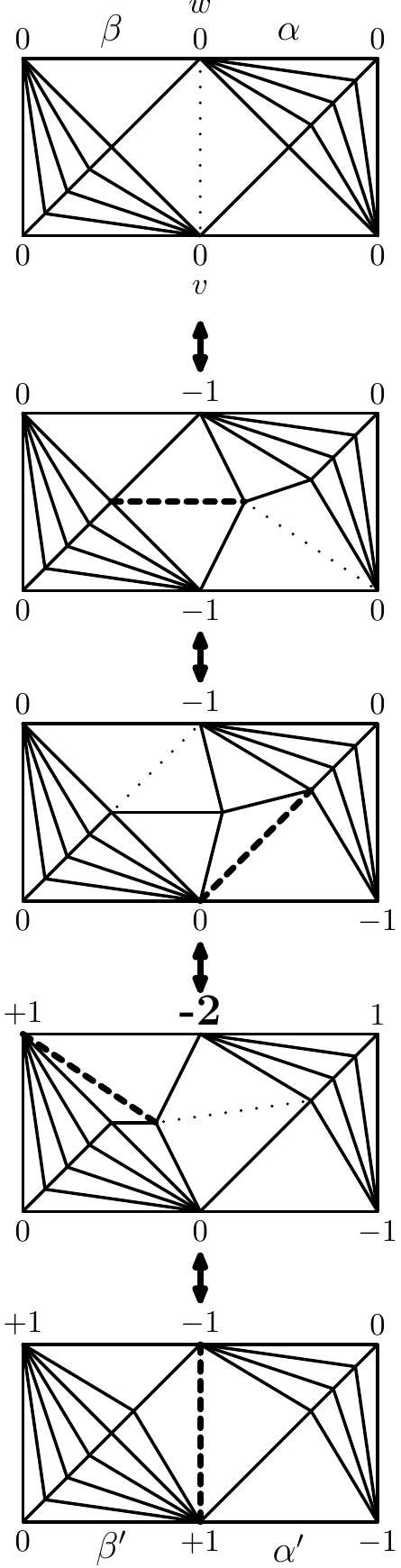}
      \caption{Transport Lemma. Left: sequence of edge flips for intersecting diagonal paths (Cond. {\em (1)}).
        Right: sequence of edge flips for diagonal paths ending in $v$ and $w$ respectively (Cond. {\em (2)}). \label{fig:flips}}
    \end{center}
  \end{figure}

\noindent {\bf Condition {\em (1)}} The diagonal paths of $\alpha$ and $\beta$ meet in $w$. In this case, the sequence of flips transforming $T$ to $T'$ is shown in Figure~\ref{fig:flips} on the left hand side (top to bottom). The dotted edge denotes the edge to be flipped next, the dashed line denotes the newly inserted edge. The integer next to a vertex indicates the change of the respective vertex degree with respect to the initial vertex degree.

Throughout this edge flip sequence the degrees of $w$, $v$ and the upper left vertex of $\alpha$ are, at some point, decreased to the initial degree minus one. The degrees of all other boundary vertices are never decreased below the initial degree. Since all three vertices of the former group are initially of degree at least five ($w$ by assumption and the other two by the flagness of $T$), the flagness condition is preserved in each step. The preconditions of the lemma ensure that no $3$-cycle is introduced in the first flip, the edges introduced by flip two and three end in the interior of $\alpha \cup \beta$ and hence cannot introduce a new $3$-cycle, and the last flip re-introduces the edge removed by the first flip.

\noindent {\bf Condition {\em (2)}} $\alpha$ and $\beta$ have diagonal paths ending in $v$ and $w$ respectively. To comply with the labelling of the statement of the lemma, let the diagonal of $\alpha$ intersect with $v$ and the diagonal of $\beta$ intersect with $w$. The sequence of edge flips transforming $T$ to $T'$ in this case is shown in Figure~\ref{fig:flips} on the right hand side (top to bottom). Meaning of dotted and dashed lines as well as integers next to vertices as in Condition {\em (1)}

Note that, in this procedure, only the degree of $w$ is, at one stage, decreased to the initial degree minus two. In addition, $v$ and the lower left  vertex of $\alpha$ are, at some point, decreased to the initial degree minus one. The degrees of all other boundary vertices are never decreased below the initial degree. By assumption, $w$ is of initial degree at least six and $v$ is of initial degree five. Again, the other vertex of $\alpha$ not containing the diagonal must be of initial degree at least five by the flagness of $T$. It follows that the flagness condition is preserved in each step. Again, no $3$-cycle is introduced by the flip sequence for reasons analogous to the ones described in the previous case.
\end{proof}

\begin{lemma} \label{lem:Qk}
Let $T$ be an $n$-vertex flag $2$-sphere, $n\geq 8$, with induced $4$-cycle $a\mbox{-}a_0\mbox{-}b\mbox{-} a_k\mbox{-}a$ bounding $\mathcal{Q}_k$. Then either $T=\Gamma_n$, $T \sim A_n$, or there exists an $n$-vertex flag $2$-sphere $T'$ with $T \sim T'$, such that (i) $a\mbox{-}a_0\mbox{-}b\mbox{-}a_k\mbox{-}a$ is an induced $4$-cycle in $T'$ bounding an ordered quadrilateral $Q$, and (ii) $T \setminus \mathcal{Q}_k = T' \setminus Q$.
\end{lemma}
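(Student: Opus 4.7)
The plan is to modify the interior of $\mathcal{Q}_k$ by flagness-preserving edge flips while ultimately restoring the external region $R := \overline{T\setminus \mathcal{Q}_k}$. First I would record the structural decomposition of $\mathcal{Q}_k$. The sub-quadrilateral $P \subset \mathcal{Q}_k$ bounded by the 4-cycle $a_0$-$c$-$a_k$-$b$-$a_0$ is already an ordered quadrilateral: its boundary is induced in $T$ (neither $a_0$-$a_k$ nor $b$-$c$ is an edge of $T$), its interior vertices $a_1,\ldots,a_{k-1}$ all have degree four in $T$, and they lie on the diagonal path $a_0$-$a_1$-$\cdots$-$a_k$. The remaining ``bowtie'' part of $\mathcal{Q}_k$ consists of the two triangles $a$-$a_0$-$c$ and $a$-$c$-$a_k$, has no interior vertex, and therefore is not an ordered quadrilateral. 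Since the 4-cycle bounding $R$ is induced, $R$ contains at least one interior vertex as well.

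A purely internal flip of $\mathcal{Q}_k$ cannot succeed. The direct candidate $a$-$c\mapsto a_0$-$a_k$ forces the 3-cycle $a_0$-$b$-$a_k$, which does not bound a triangle; flipping any $c$-$a_i$ or $b$-$a_i$ similarly creates a 3-cycle through the other apex; and flipping $a_{i-1}$-$a_i$ forces $b$-$c$ with obstructing 3-cycles through each other $a_j$. All of this is forced by the complete bipartite adjacency between $\{b,c\}$ and $\{a_0,\ldots,a_k\}$ inside $\mathcal{Q}_k$. To circumvent this obstruction I would use $R$ as a workspace: choose an interior vertex $e$ of $R$ with $b$-$a_0$-$e$ a triangle of $R$ and perform the preparatory flip $b$-$a_0\mapsto a_1$-$e$ (easily seen to preserve flagness, since the only common neighbours of $a_1$ and $e$ are $a_0$ and $b$, both of which are enclosed in the two newly created triangles). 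The flip temporarily breaks the 4-cycle bounding $\mathcal{Q}_k$ but removes the offending edge $a_0$-$b$, which then allows $a$-$c$ to be flipped to an edge interior to the former bowtie without forcing a forbidden 3-cycle. I would then iteratively apply the Transport Lemma (Lemma~\ref{lem:transport}) to slide one interior vertex of $P$ into the former bowtie across the newly opened corridor, and finally restore $b$-$a_0$ (and analogously the $a_k$-side if needed) by the inverse flip, leaving $R$ unchanged and $\mathcal{Q}_k$ replaced by an ordered quadrilateral $Q$.

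The main obstacle will be the degenerate cases in which no such external vertex $e$ is available: either $b$ has too few external neighbours in $R$, or $R$ contains a single interior vertex $d$ adjacent to all four of $a,a_0,b,a_k$ (which for $n = k+5$ is the minimal possibility). In these situations the flagness constraint together with the rigid fan structure of $\mathcal{Q}_k$ forces $T$ into a narrow family of triangulations. I expect the analysis to show that in each such configuration either $T = \Gamma_n$ (the case where $R$ and $\mathcal{Q}_k$ assemble into a double cone) or $T$ admits an explicit flip sequence through $\mathcal{F}_n$ ending at $A_n$. Proving that the list of degenerate configurations is exhaustive and constructing the explicit reductions to $A_n$ will be the most delicate part of the argument, and is best carried out by induction on $k$ with an explicit base-case analysis of the smallest configurations $n \in \{8,9\}$.
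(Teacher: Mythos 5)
Your outline diverges substantially from the paper's argument, and the key step does not go through. You propose the preparatory flip on the boundary edge $b\mbox{-}a_0$ (sending it to $a_1\mbox{-}e$ with $e$ interior to $R$) in order to ``remove the offending edge'' and then flip $a\mbox{-}c$. But this is insufficient: the only way to flip $a\mbox{-}c$ is $a\mbox{-}c\mapsto a_0\mbox{-}a_k$, and $a_0\mbox{-}a_k\mbox{-}b$ is not the only potential empty $3$-cycle through the new edge. If $R$ contains \emph{any} common neighbour $f$ of $a_0$ and $a_k$ (for example, if $R$ has a single interior vertex, or more generally whenever $R$ admits a diagonal path meeting both $a_0$ and $a_k$), then after the flip the cycle $a_0\mbox{-}a_k\mbox{-}f\mbox{-}a_0$ becomes a non-facial $3$-cycle and flagness is lost. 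Your preparatory flip does nothing to rule this out, and it is far from a degenerate situation. A second problem: after $b\mbox{-}a_0\mapsto a_1\mbox{-}e$ the degree of $a_1$ becomes five and the $4$-cycle boundary of the sub-quadrilateral $P$ is destroyed, so $P$ is no longer an ordered quadrilateral and Lemma~\ref{lem:transport} (which moreover requires \emph{both} quadrilaterals to be ordered, i.e.\ to have interior vertices, whereas your ``bowtie'' has none) cannot be applied as you describe. Finally, you explicitly defer the entire analysis of the degenerate cases yielding $\Gamma_n$ or $A_n$, but identifying exactly those cases and giving the reductions is precisely the content the lemma must deliver; without it the proof is not complete.

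For contrast, the paper never introduces the diagonal $a_0\mbox{-}a_k$. It enters the exterior only on the side of the high-degree fan vertex, along the link of $a_0$: it first walks outward through the triangles $a_0\mbox{-}(\text{fan apex})\mbox{-}x_1, a_0\mbox{-}x_1\mbox{-}x_2,\dots$ until it finds a vertex $x_\ell$ of degree at least five (and characterizes the failure modes of this walk explicitly as $T\cong A_n$), performs a short sequence of ``finger-push'' flips around $a_0$ to bring $x_{\ell+1}$ adjacent to $a_1$, resolves the interior of $\mathcal{Q}_k$ by two or three core flips, and then runs the finger-push in reverse so that $T\setminus\mathcal{Q}_k$ is restored verbatim. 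The cases $k=3$, $k=4$ are handled with explicit flip sequences, and $k>4$ is reduced to them. The reversibility of the exterior flips is built into the design, which is the part your proposal is missing.
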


\begin{proof}
  We use the notation for $\mathcal{Q}_k$ as introduced in Figure~\ref{fig:An}(c) and in accordance with the vertex labels of the induced $4$-cycle $a\mbox{-}a_0\mbox{-}b\mbox{-} a_k\mbox{-}a$ bounding $\mathcal{Q}_k$.
	
  \medskip
	
  \noindent {\bf Case $k=3$}: Refer to Figure~\ref{fig:Q3}(a). Consider the two triangles $a_0 a x_1, a_3 a x'_1 \in F(T)$ outside but adjacent to $\mathcal{Q}_3$. If $x_1 = x'_1$ (i.e., $\deg_T (a) =5$) consider triangles $a_0 x_i x_{i+1}, a_3 x_i x'_{i+1} \in F(T)$, $i \geq 1$, until either $x_{\ell+1} \neq x'_{\ell+1}$, that is, $\deg_T (x_{\ell}) \geq 5$, or $x'_{\ell} = x_{\ell} = b$.
	
The case $x'_{1} = x_{1} = b$ is not possible because $a\mbox{-}a_0\mbox{-}b\mbox{-} a_k\mbox{-}a$ is induced (and because $n \geq 8$). If $x'_{\ell} = x_{\ell} = b$, $\ell \geq 2$, $T$ must be isomorphic to $A_{\ell + 6}$ and we are done. Otherwise, consider the two triangles $a_0 x_{\ell} x_{\ell+1}$ and $a_3 x_{\ell} x'_{\ell+1}$, $x'_{\ell+1} \neq x_{\ell+1}$. Neither $a_0 x'_{\ell+1}$ nor $a_{3} x_{\ell+1}$ can be edges of $T$ since otherwise there are induced $3$-cycles $a_0\mbox{-}x'_{\ell+1}\mbox{-}x_{\ell}\mbox{-}a_0$ or $a_3\mbox{-}x_{\ell+1}\mbox{-}x_{\ell}\mbox{-}a_3$.

Keeping these observations in mind, we perform edge flip $a_0 x_{\ell} \mapsto x_{\ell+1} x_{\ell -1}$ (see Figure~\ref{fig:Q3}(b)), followed by edge flips $a_0 x_{\ell-1} \mapsto x_{\ell+1} x_{\ell -2}$, etc. all the way down to $a_0 a \mapsto x_{\ell+1} a_1$ (see Figure~\ref{fig:Q3}(c)). For each of them we have that, since $a_3 x_{\ell+1}$ is not an edge, $a_3 \mbox{-}x_i\mbox{-}x_{\ell+1}\mbox{-}a_3$ is not a $3$-cycle of $T$.
	
It follows that we can perform flips $a_1 c \mapsto a_0 a_2$ (Figure~\ref{fig:Q3}(d)) and $a a_2 \mapsto a_1 a_3$ (Figure~\ref{fig:Q3}(e)), followed by the initial sequence of edge flips in reverse, i.e., $x_{\ell+1} a_1 \mapsto a_0 a$, $x_{\ell+1} a \mapsto a_0 x_1$, $x_{\ell+1} x_1 \mapsto a_0 x_2$, all the way up to $x_{\ell+1} x_{\ell-1} \mapsto a_0 x_{\ell}$ (Figure~\ref{fig:Q3}(f)). Observe that now all vertices inside $\mathcal{Q}_3$ are of degree four and outside $\mathcal{Q}_3$ the triangulation is unchanged. This proves the result for $k=3$.
		
  \begin{figure}[htbp]
    \centerline{\includegraphics[width=.91\textwidth]{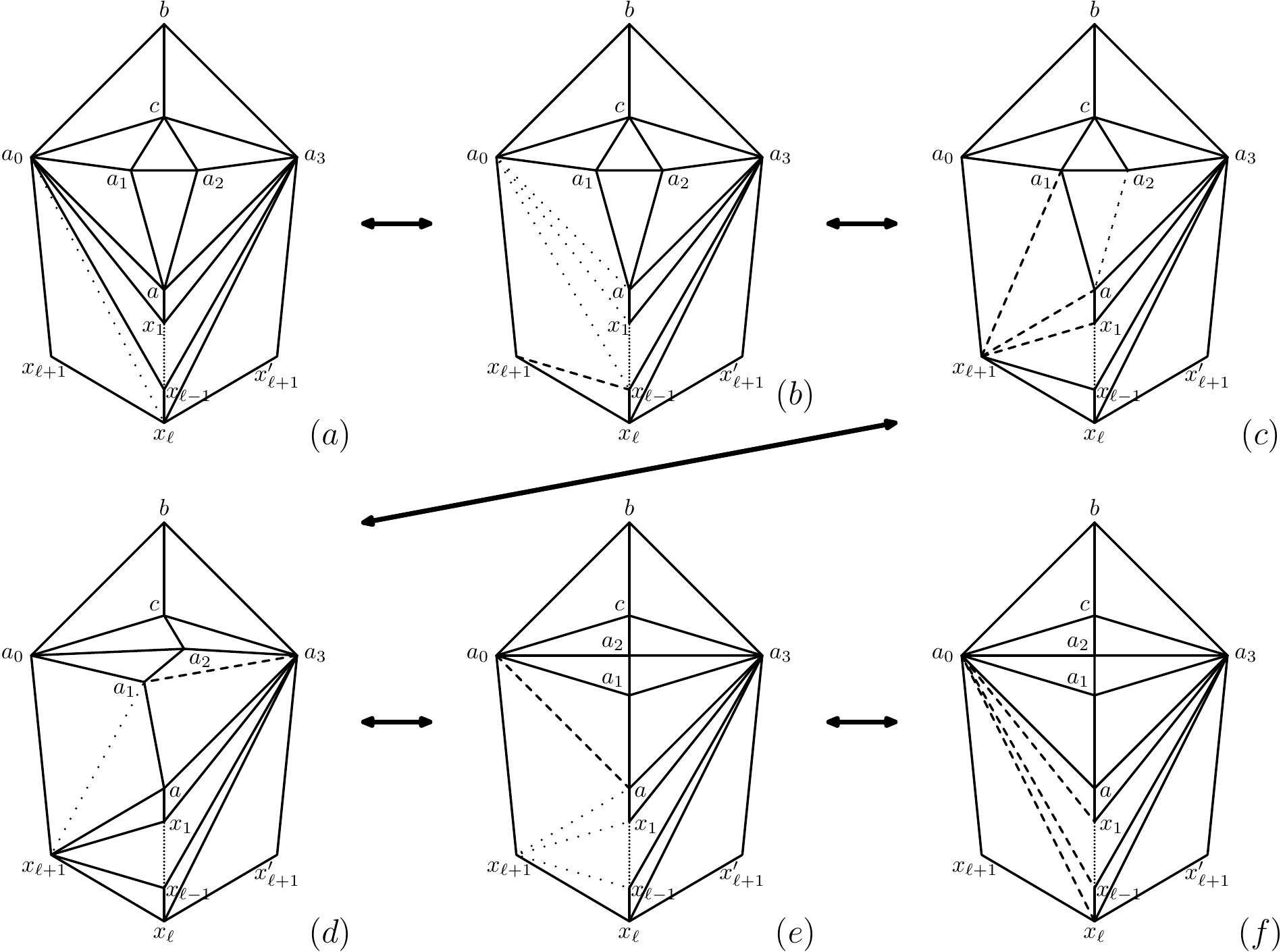}}
    \caption{Resolving $\mathcal{Q}_3$ into a quadrilateral with three interior vertices of degree four.\label{fig:Q3}}
  \end{figure}		

  \begin{figure}[htbp]
    \centerline{\includegraphics[width=.91\textwidth]{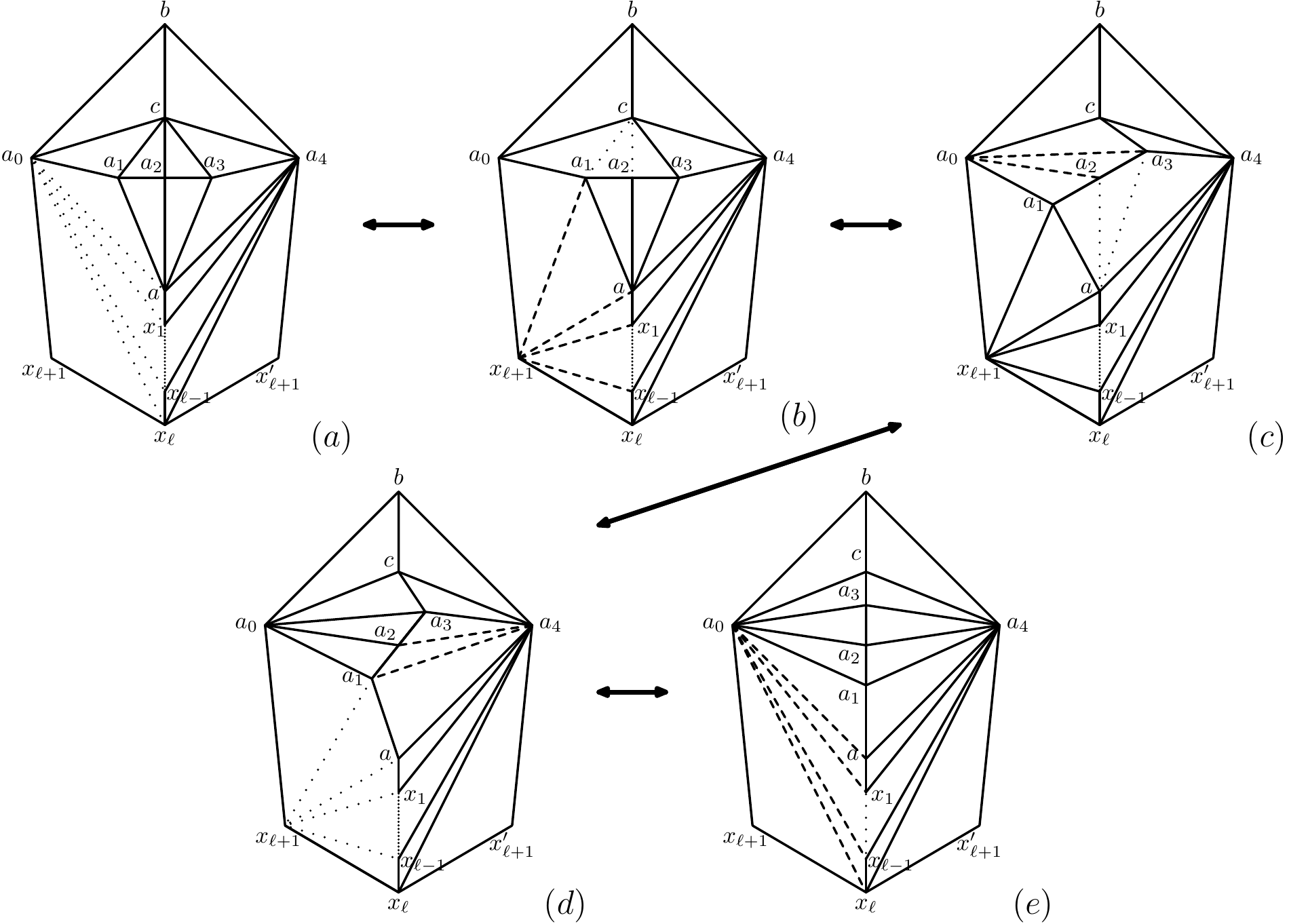}}
    \caption{Resolving $\mathcal{Q}_4$ into a quadrilateral with four interior vertices of degree four. \label{fig:Q4}}
  \end{figure}
	
\noindent {\bf Case $k = 4$}: Refer to Figure~\ref{fig:Q4}(a). The case $k=4$ is very similar to the case $k=3$. Again, the case $x'_{1} = x_{1} = b$ is not possible because $a\mbox{-}a_0\mbox{-}b\mbox{-} a_k\mbox{-}a$ is induced. If $x'_{\ell} = x_{\ell} = b$ for $\ell \geq 2$, $T$ decomposes into two ordered proper quadrilaterals along induced $4$-cycle $a_0\mbox{-}a\mbox{-}a_4\mbox{-}c\mbox{-}a_0$ to which we can apply Lemma~\ref{lem:transport}: The ordered proper quadrilateral contained in $\mathcal{Q}_4$, the rest of $T$, $a$ and $a_0$ take the roles of $\alpha$, $\beta$, $w$ and $v$. The diagonals are disjoint, $\deg_T (a) = 6$ and $\deg_T (a_0) \geq 5$. In particular, Condition {\em (2)} is satisfied with $k=3$ and $\ell \geq 1$ and we can transport $a_1$ or $a_3$ away from its quadrilateral to conclude that $T\sim A_n$, $n \geq 8$.

If $x_{\ell+1} \neq x'_{\ell+1}$ for some $\ell \geq 0$ we perform a sequence of edge flips similar to the one in the case $k = 3$ above. More precisely, the initial set of flips (Figure~\ref{fig:Q4}(b)) and the final set of flips (Figure~\ref{fig:Q4}(e)) are identical with the initial two and the final two steps of case $k=3$. Once flip $a_0 a \mapsto x_{\ell+1} a_1$ is performed, we can perform $a_1 c \mapsto a_0 a_2$ and $a_2 c \mapsto a_0 a_3$ (Figure~\ref{fig:Q4}(c)), followed by $a_3 a \mapsto a_2 a_4$ and $a_2 a \mapsto a_1 a_4$ (Figure~\ref{fig:Q4}(d)).
	
\noindent {\bf Case $k > 4$}: Refer to Figure~\ref{fig:Qn}(a). From $k > 4$ it follows that $n \geq 10$. Moreover, $a_0 \neq a_k$, and $a_0 a_k$ is a non-edge of $T$ since $a_0\mbox{-}a\mbox{-}a_k\mbox{-}b\mbox{-}a_0$ is an induced $4$-cycle. We start by performing flips $a_0 c \mapsto b a_1$, $a_1 c \mapsto b a_2$, all the way to $a_{k-4} c \mapsto b a_{k-3}$ (see Figure~\ref{fig:Qn}(b)). The resulting quadrilateral splits into two parts. One with only degree four interior vertices (at least one), the other one being isomorphic to $\mathcal{Q}_3$ with diagonal path going from $a_{k-3}$ to $a_k$ (see Figure~\ref{fig:Qn}(c) for a re-arranged version of the top centre quadrilateral emphasizing this fact).

  \begin{figure}[hbt]
    \centerline{\includegraphics[width=0.95\textwidth]{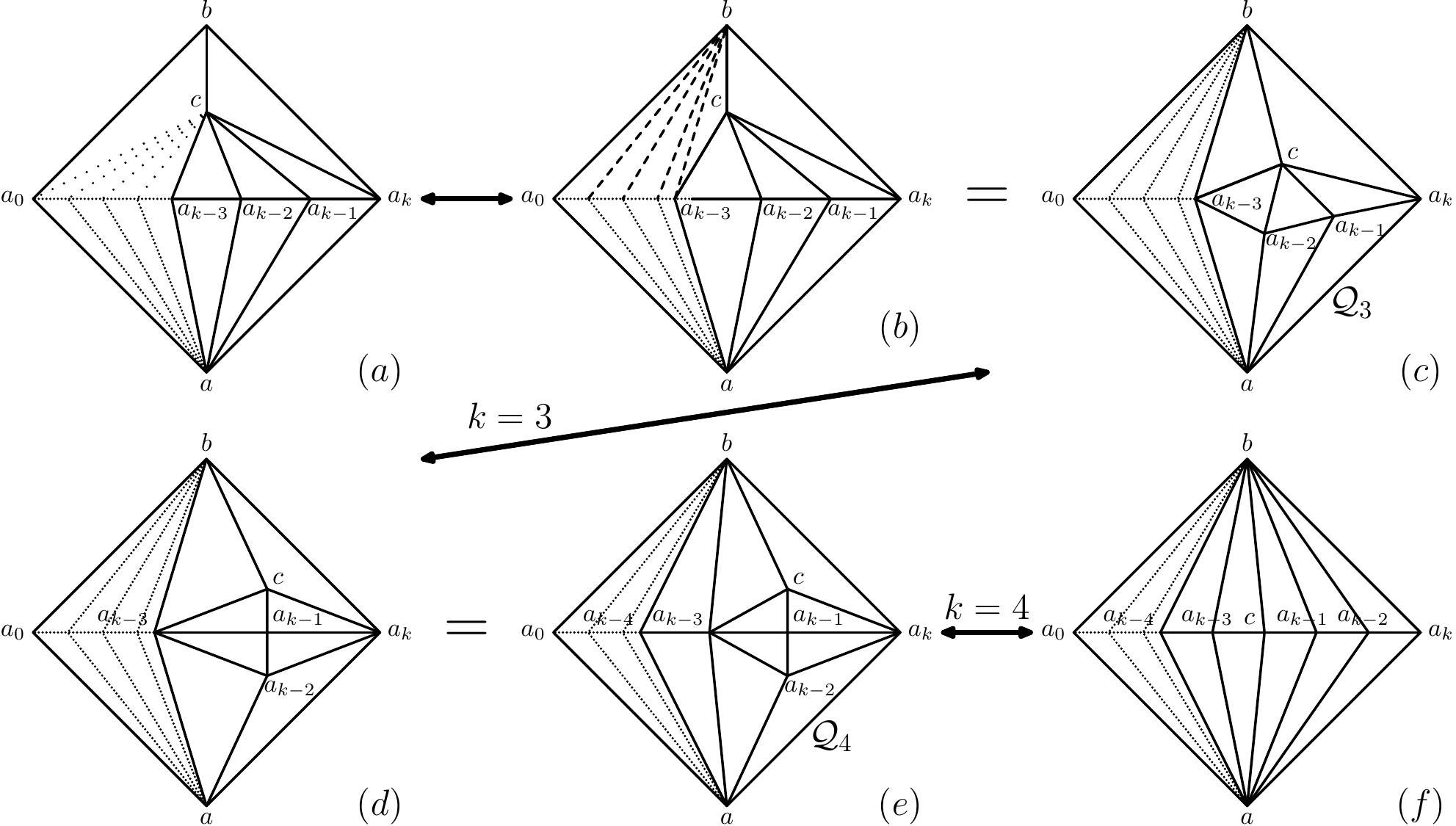}}
    \caption{Resolving $\mathcal{Q}_k$, $k > 4$, into a quadrilateral with $k$ interior vertices of degree four. \label{fig:Qn}}
  \end{figure}	

Use the case $k=3$ to turn $\mathcal{Q}_3$ into a quadrilateral containing only interior vertices of degree four with the diagonal path running from $a$ to $b$ (see Figure~\ref{fig:Qn}(d)). Since $k>4$, the overall quadrilateral again splits into two parts, one with only degree four interior vertices (possibly none), the other one being isomorphic to $\mathcal{Q}_4$ with diagonal path going from $a$ to $b$ (see Figure~\ref{fig:Qn}(e) for a re-arranged version of the bottom left quadrilateral emphasizing this fact). Use the case $k=4$ to either conclude that $T \sim A_n$, or to turn $\mathcal{Q}_4$ into a quadrilateral containing only degree four interior vertices and diagonal running from $a_{k-4}$ to $a_k$. In the latter case the overall quadrilateral now only has interior vertices of degree four which proves the lemma (see Figure~\ref{fig:Qn}(f)).
\end{proof}

\begin{lemma}[Merge Lemma]   \label{lem:red2}
Let $T$ be an $n$-vertex flag $2$-sphere containing two ordered quadrilaterals $\alpha$ and $\beta$ with disjoint interiors, but common outer edges $uv$ and $uw$. Then either $T = \Gamma_n$, $T \sim A_n$, or $T \sim T'$ where $T'$ has an ordered quadrilateral $\gamma$ with boundary $\partial (\alpha \cup \beta)$ and $T' = ( T \setminus \{ \alpha, \beta \}) \cup \{ \gamma \}$.
\end{lemma}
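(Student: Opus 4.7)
The plan is to show that $\gamma = \alpha \cup \beta$ is, or can be made into, an ordered quadrilateral with boundary $v$-$p$-$w$-$q$-$v$, where $p$ and $q$ denote the corners of $\alpha$ and $\beta$ opposite to $u$. Its interior vertices are $u$ together with the interior vertices of $\alpha$ and $\beta$; the latter have degree four in $T$ by hypothesis, so the entire question reduces to controlling the local structure at $u$ and aligning the diagonals of $\alpha$ and $\beta$ into a single diagonal of $\gamma$.

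The easy case is when the diagonal paths of $\alpha$ and $\beta$ both terminate at $u$, observing that for a quadrilateral with only one interior vertex the diagonal direction is ambiguous and may be chosen freely. In this case, $u$'s neighbours in $T$ are exactly $v$, $w$, and the first interior vertex of each of the two diagonals, so $\deg_T(u) = 4$. The two diagonals concatenate at $u$ into a single diagonal path of $\gamma$ from $p$ to $q$, every interior vertex lies on this path and has degree four, and hence $\gamma$ is already ordered; we take $T' = T$.

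Otherwise some diagonal, say that of $\alpha$, runs transversely from $v$ to $w$ with $k \geq 2$ interior vertices $v_1, \ldots, v_k$, each adjacent to $v$, $w$, $u$, and $p$; hence $\deg_T(u) \geq k + 3 \geq 5$. I would rotate this diagonal by induction on $k + \ell$, where $\ell$ counts the analogous transverse interior vertices in $\beta$. In the induction step I split $\alpha$ along an appropriate interior edge such as $p v_i$ into two ordered sub-quadrilaterals sharing a single boundary edge, check that one of the two hypotheses of the Transport Lemma \ref{lem:transport} holds for them, and apply Lemma \ref{lem:transport} to migrate one interior vertex off the transverse diagonal. Whenever the local configuration matches a pattern $\mathcal{Q}_j$ from Definition \ref{def:An}, I would invoke Lemma \ref{lem:Qk} instead, which yields either $T \sim A_n$ directly or a resolution of the piece into a quadrilateral whose diagonal now terminates at $u$. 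Iterating drives the configuration into the easy case treated above.

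The main obstacle is that a direct edge flip intended to reroute a transverse diagonal of $\alpha$ generically violates flagness: each interior diagonal vertex is adjacent to both side corners $v$ and $w$, so a naive flip creates either a vertex of degree three or an induced $3$-cycle not bounding a triangle. The Transport Lemma is engineered precisely to bypass this obstruction, but its hypotheses on the degrees of $v$ and $w$ and on the positions of the diagonal endpoints must be verified carefully within the split sub-quadrilaterals of $\alpha \cup \beta$. The exceptional outcomes $T = \Gamma_n$ and $T \sim A_n$ arise exactly in those degenerate configurations where no valid transport step is available: either the neighbourhoods of $v$ and $w$ outside $\alpha \cup \beta$ force $T$ to be the double cone $\Gamma_n$, or an invocation of Lemma \ref{lem:Qk} fires its $A_n$ alternative.
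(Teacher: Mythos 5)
Your overall plan — case split on the configuration of the two diagonals, treat directly the case where both end at $u$, and otherwise reduce to it via the Transport Lemma and Lemma~\ref{lem:Qk}, with $\Gamma_n$ and $A_n$ as degenerate outcomes — has the same shape as the paper's argument. But the mechanics you propose for the reduction do not work, and there are local inaccuracies that point to a misreading of the configuration. In an ordered quadrilateral $\alpha$ with corners $u,v,p,w$ and a transverse diagonal $v\mbox{-}v_1\mbox{-}\cdots\mbox{-}v_k\mbox{-}w$, the interior vertices $v_i$ with $1<i<k$ are adjacent to $u$, $p$ and their two diagonal neighbours, not to $v$ and $w$. Cutting $\alpha$ along $u\mbox{-}v_i\mbox{-}p$ produces two sub-quadrilaterals that share \emph{two} boundary edges $uv_i$ and $pv_i$, not one, and their common corner $v_i$ has degree four in $T$, which violates the degree hypotheses of Lemma~\ref{lem:transport} (the two-shared-edge relaxation in the paper's remark covers only a narrow parameter range). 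More fundamentally, transporting a vertex between two sub-pieces of $\alpha$ keeps it inside $\alpha$, leaves $\alpha$ with the same $k$ interior vertices on the same transverse diagonal, and changes nothing about how that diagonal meets $u$; your intended progress measure $k+\ell$ simply does not drop under such internal moves, so the induction has no engine.

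What actually drives the reduction in the paper is transporting interior vertices across one of the genuine common edges $uv$ or $uw$ \emph{between} $\alpha$ and $\beta$: this decreases the length of the transverse diagonal and changes $\deg_T(u)$. The process terminates not in your easy case directly but in a sub-configuration that, together with two triangles of the adjacent quadrilateral, is isomorphic to $\mathcal{Q}_3$; Lemma~\ref{lem:Qk} then performs the actual rotation of the diagonal (or yields $A_n$). So Lemma~\ref{lem:Qk} is not an occasional fallback for pathological patterns as you present it — it is the essential step that turns a transverse diagonal into one ending at $u$. Finally, your easy case is incomplete: when both diagonals end at $u$ and the opposite corners $p$ and $q$ coincide, $\alpha\cup\beta=T$ and $\partial(\alpha\cup\beta)$ is not a $4$-cycle, so "$T'=T$" is not an available conclusion; this is precisely the sub-case $T=\Gamma_n$, and it arises from the internal configuration of $\alpha\cup\beta$, not from the neighbourhoods of $v$ and $w$ outside it.
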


\begin{proof}
  We have four cases for the initial configuration of $\alpha$ and $\beta$ emerging from the different possible relative orientations of the diagonal paths of $\alpha$ and $\beta$, see Figure~\ref{fig:initConfig}.

  \begin{figure}[b]
    \begin{center}
      \includegraphics[width=.9\textwidth]{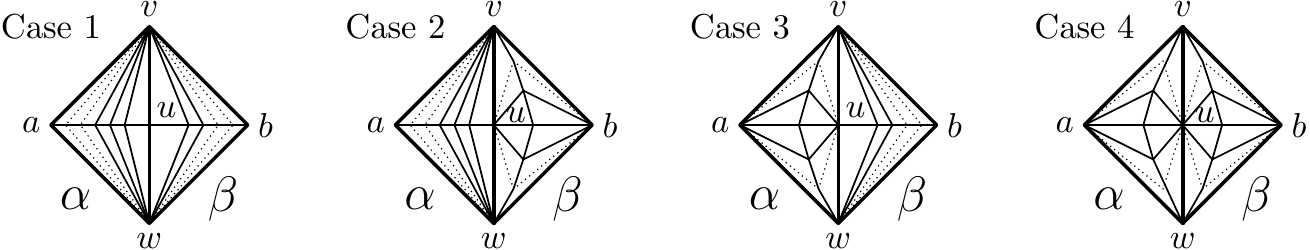}
    \end{center}
    \caption{The four initial configurations for $\alpha$ and $\beta$ in the proof of Lemma~\ref{lem:red2}. \label{fig:initConfig}}
  \end{figure}

\medskip

\noindent {\bf Case 1:} If $a=b$ then $\alpha \cup \beta = T$. In this case, $T = \Gamma_n$ with cone apices $v$ and $w$ and we are done.

If $a \neq b$ we can merge $\alpha$ and $\beta$ into one larger ordered quadrilateral with boundary $\partial (\alpha \cup \beta)$.

\begin{figure}[hbt]
  \begin{center}
    \includegraphics[width=\textwidth]{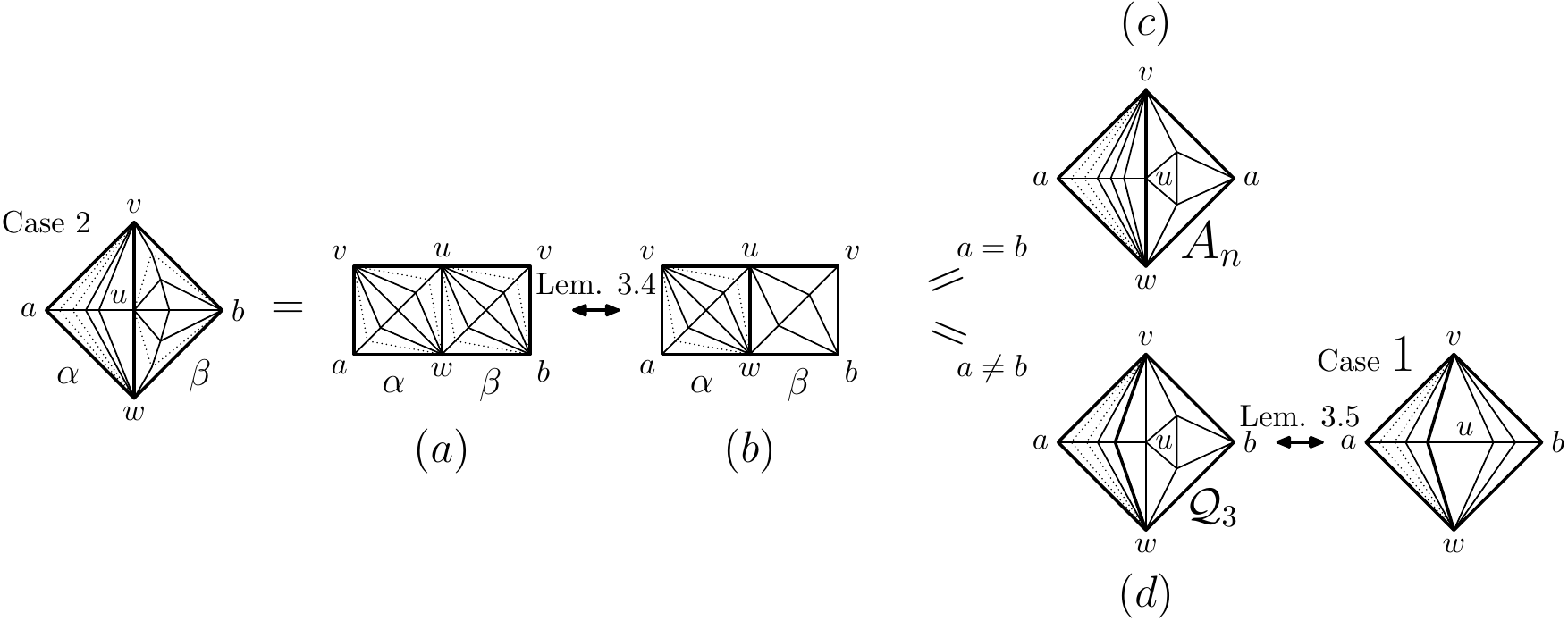}
    \caption{Transporting vertices in Case 2: (a) Case 2 redrawn after cutting along edge $uv$. (b) After transporting interior vertices away from $\beta$ (Lemma~\ref{lem:transport}). (c) Case $a=b$ yields $A_n$. (d) Case $a\neq b$ yields $\mathcal{Q}_3$. In the latter case apply Lemma~\ref{lem:Qk} to fall back to Case 1. \label{fig:case2}}
  \end{center}
\end{figure}

\noindent {\bf Case 2:} Refer to Figure~\ref{fig:case2}. As before, if $a=b$ then $\alpha \cup \beta = T$. If, in this case, $\beta$ contains only one interior vertex, then we have $\Gamma_n$ with cone apices $v$ and $w$ and we are done. If $\alpha$ contains only one interior vertex, both $v$ and $w$ are of degree four, and we have $\Gamma_n$ with cone apices $u$ and $a=b$. 

Thus, we can assume both $\alpha$ and $\beta$ have at least two interior vertices. In this case, we iteratively apply Lemma~\ref{lem:transport} to transport interior vertices from $\beta$ to $\alpha$ across edge $uw$ until $u$ is of degree five (Figure~\ref{fig:case2}(b)) and we obtain $A_n$ (Figure~\ref{fig:case2}(c)).

If $a \neq b$, we, again, apply Lemma~\ref{lem:transport} to transport interior vertices from $\beta$ to $\alpha$ across edge $uw$ until $u$ is of degree five (Figure~\ref{fig:case2}(b)). The quadrilateral $\beta$ together with the two rightmost triangles of $\alpha$ now form a quadrilateral isomorphic to $\mathcal{Q}_3$ with diagonal path from $v$ to $w$ (see Figure~\ref{fig:case2}(d)). This can be resolved into a quadrilateral with interior vertices all of degree four and diagonal intersecting $b$ (note that $a$ is of degree greater than four and thus (i) the preconditions of Lemma~\ref{lem:Qk} are satisfied and (ii) we can always resolve $\mathcal{Q}_3$ in this case) and we are back to Case~$1$.

\medskip

\noindent {\bf Case 3:} This is completely analogous to Case 2.

\medskip

\noindent {\bf Case 4:} Again, if $a=b$ then $\alpha \cup \beta = T$, and $T$ is equal to $\Gamma_n$ with cone apices $u$ and $a=b$.

Hence, let $a \neq b$. If $\alpha$ contains only a single interior vertex we fall back to Case 2, if $\beta$ contains only a single vertex we fall back to Case 3. Thus we can assume both $\alpha$ and $\beta$ have at least two interior vertices. In this case, $\deg_T(u) \geq 6$, $\deg_T(v), \deg_T(w) \geq 5$, and we apply Lemma~\ref{lem:transport} to transport vertices from $\alpha$ to $\beta$ until $\alpha$ contains only a single interior vertex. Then we proceed with Case 2.
\end{proof}

\begin{lemma} \label{lem:induced}
For $n\geq 8$, let $T \in \mathcal{F}_n\setminus\{\Gamma_n\}$. Then there exists $T' \in \mathcal{F}_n\setminus\{\Gamma_n\}$ with $T \sim T'$, and $a,b,c,d \in V(T')$ such that (i) $a\mbox{-}b\mbox{-}c\mbox{-}d\mbox{-}a$ is an induced $4$-cycle, and (ii) $\deg_{T'}(a)$, $\deg_{T'} (b)$, $\deg_{T'} (c)$, $\deg_{T'} (d) \geq 5$. In particular, $T'$ splits into two proper quadrilaterals $Q$ and $R$ both bounded by $a\mbox{-}b\mbox{-}c\mbox{-}d\mbox{-}a$.
\end{lemma}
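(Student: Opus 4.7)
The plan is to produce, possibly after a sequence of flagness-preserving edge flips starting from $T$, an induced $4$-cycle in the resulting $T'$ whose four vertices all have degree at least $5$. Any such cycle is automatically the boundary of two proper quadrilaterals by Definition~\ref{def:proper}, and the requirement $T' \neq \Gamma_n$ is automatic: since any flip on $\Gamma_n$ produces a vertex of degree three and hence destroys flagness, by reversibility $\Gamma_n$ cannot be reached via a flagness-preserving flip from any other flag $2$-sphere, so $\Gamma_n$ is an isolated node in $\mathcal{F}_n$. I proceed by cases on the minimum degree of $T$.

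\emph{Suppose $T$ has a vertex $v$ of degree $4$.} Because $T$ is flag, $\link_T(v) = a\mbox{-}b\mbox{-}c\mbox{-}d\mbox{-}a$ is automatically an induced $4$-cycle. If all of $a, b, c, d$ have degree at least $5$ in $T$, then $T' = T$ works. Otherwise some link-vertex, say $a$, has degree $4$, and its link is an induced $4$-cycle $v\mbox{-}b\mbox{-}x\mbox{-}d\mbox{-}v$ for a unique $x \notin \str_T(v)$. The strategy is to raise $\deg_T(a)$ from $4$ to $5$ by flipping the edge $bx$ (shared by triangles $abx$ and $bxy$ for a unique $y$) to the new edge $ay$. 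A direct check of the new (pentagonal) link of $a$ in the flipped complex shows that this flip is graph-theoretically valid and preserves flagness unless either $y = c$ (equivalently, $bxc$ is already a face of $T$) or $yd$ is already an edge of $T$. The symmetric flip $dx \mapsto ay'$ is available as an alternative whenever the first is blocked.

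\emph{Suppose $T$ has minimum degree at least $5$.} Then any induced $4$-cycle of $T$ already meets the degree condition, so the task reduces to exhibiting one. If such a cycle exists in $T$ we are done; otherwise $T$ is induced-$C_4$-free (the icosahedron on $n=12$ is the paradigmatic example), and I would then perform the single explicit flip that, for the icosahedron, takes an edge $st$ to the edge $uw$ joining the apices of its two incident triangles. A direct verification shows that this flip preserves flagness (the only links that change are those of $s, t, u, w$, and their new links remain induced, since the affected non-edges of $T$ are pentagon-diagonals in the old links) and produces two vertices of degree $4$. We then fall into the previous case.

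The main obstacle is the degree-$4$ case, where I need to show that the pair of candidate flips $bx \mapsto ay$ and $dx \mapsto ay'$ cannot both be simultaneously obstructed. Each blocking condition forces an additional face in $T$ (typically $bxc$ on one side, or else the face $xyd$ forced by flagness when $yd$ happens to be an edge), and these forced faces in turn tightly constrain the links of $b, c, d, x$, since all of them must still be induced cycles in the flag sphere $T$. A careful local enumeration shows that the only way every candidate flip is blocked is for the six vertices $\{v, a, b, c, d, x\}$ to close up into a copy of $\Gamma_6$, which is impossible under the hypothesis $n \geq 8$. Hence at least one flip succeeds; each flip strictly decreases the number of degree-$4$ vertices on the link of the current degree-$4$ vertex, and iterating the procedure (switching, if necessary, to the link of a neighbouring degree-$4$ vertex) terminates at a $4$-cycle of the required form.
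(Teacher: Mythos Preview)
Your degree-$4$ case has a real gap. Suppose both candidate flips are blocked via the second obstruction on each side, i.e.\ $yd$ and $y'b$ are both edges of $T$. Since $\link_T(x)$ is an induced cycle containing the path $y\mbox{-}b\mbox{-}a\mbox{-}d\mbox{-}y'$, the edge $yd$ forces $y$ to be adjacent to $d$ in $\link_T(x)$, hence $y=y'$ and $\link_T(x)=b\mbox{-}a\mbox{-}d\mbox{-}y\mbox{-}b$, so $\deg_T(x)=4$. This does \emph{not} force the six vertices $\{v,a,b,c,d,x\}$ to close up into $\Gamma_6$; it merely uncovers a further degree-$4$ vertex $x$ and a (possibly new) seventh vertex $y$, and the same obstruction can recur along the chain $v,a,x,y,\dots$ for as long as one likes. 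Your termination measure is also wrong as stated: the flip $bx\mapsto ay$ lowers $\deg_T(b)$ by one, so if $b$ had degree exactly $5$ it becomes a new degree-$4$ vertex on $\link_T(v)$ and the count of degree-$4$ link vertices does not decrease.

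The paper's remedy is to perform \emph{no flips at all} when a degree-$4$ vertex is present. If $a\in\link_T(v)$ has degree $4$, one simply enlarges the quadrilateral $\str_T(v)$ to $\str_T(v)\cup\str_T(a)$, whose boundary is the induced $4$-cycle $x\mbox{-}b\mbox{-}c\mbox{-}d\mbox{-}x$; iterating first in the $a$-direction and then in the $c$-direction either reaches an induced $4$-cycle all of whose corners have degree $\ge 5$, or the chain of degree-$4$ vertices closes up and $T\cong\Gamma_n$, contradicting the hypothesis. In the minimum-degree-$5$ case your idea is essentially the paper's, but you must specify that the flipped edge is incident to a vertex of degree exactly $5$ (one exists by Euler's formula), so that the flip actually produces a degree-$4$ vertex whose link is the desired $4$-cycle with all corners of degree $\ge 5$.
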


\begin{proof}
If $T$ contains a vertex $v$ of degree four, then, by the flagness of $T$, the link of $v$ is an induced $4$-cycle, say $a\mbox{-}b\mbox{-}c\mbox{-}d\mbox{-}a$. If any of these vertices, say $a$, is of degree four, then, since $n\geq 8$, the boundary of the union of the stars $v$ and $a$ is an induced $4$-cycle. Moreover, $b$ and $d$ are of degree at least five. Iterating this process either yields an induced $4$-cycle $x\mbox{-}b\mbox{-}c\mbox{-}d\mbox{-}x$, for some vertex $x$ of $T$ of degree at least five, or $x=c$, and $T$ is isomorphic to $\Gamma_n$, a contradiction. Hence, assume $\deg_T (x) \geq 5$, and thus $x \neq c$. If the degree of $c$ is $4$, consider the union of the quadrilateral containing $v$ and bounded by $x\mbox{-}b\mbox{-}c\mbox{-}d\mbox{-}x$ and the star of vertex $c$. As before, iterate this procedure until we obtain an induced $4$-cycle $x\mbox{-}b\mbox{-}y\mbox{-}d\mbox{-}x$ in $T$ (possibly $y=c$) with $x$ and $y$ necessarily distinct and both of degree at least five (note that $x=y$ implies $T$ isomorphic to $\Gamma_n$ and thus $\deg_T(x) = 4$, a contradiction).
	
Since $T$ is flag, it cannot contain a vertex of degree three. If, in addition, $T$ does not contain a vertex of degree four, then $T$ must contain a vertex $w$ of degree five (this is a consequence of Euler's formula which implies that the average vertex degree of a triangulated $2$-sphere must be less than six). Let $a u w$ and $b u w$ be two adjacent triangles in the star of $w$. If $a$ and $b$ have a common neighbour $x$ distinct from $w$ and $u$, then $x\mbox{-}a\mbox{-}w\mbox{-}b\mbox{-}x$ is an induced $4$-cycle, and we are done since $T$ has no vertex of degree four. Otherwise the flip $uw \mapsto ab$ yields a flag $2$-sphere in which $w$ has degree four. Now the link of $w$ is an induced $4$-cycle with all four vertices being of degree at least five.
\end{proof}

\begin{lemma} 	\label{lem:organise}
Let $T$ be an $n$-vertex flag $2$-sphere which splits into two proper quadrilaterals $Q$ and $R$ along an induced $4$-cycle $a\mbox{-}c\mbox{-}b\mbox{-}d\mbox{-}a$. Then there exists an $n$-vertex flag $2$-sphere $T'$ with $T \sim T'$, such that $T' = Q' \cup R$, and the interior of $Q'$ contains only degree four vertices.
\end{lemma}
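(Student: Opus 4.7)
My plan is to induct on $k$, the number of interior vertices of $Q$. The base case $k=1$ is trivial: a triangulated disc with $4$ boundary vertices and a single interior vertex forces that interior vertex to have degree $4$ in $T$, so $Q$ is already ordered and we may take $Q' := Q$.

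For the inductive step with $k \geq 2$, I would distinguish two cases depending on whether $Q$ contains an interior vertex of degree four in $T$. If $Q$ has a degree-$4$ interior vertex $v$, flagness of $T$ ensures that $\link_T(v)$ is an induced $4$-cycle, so $\str_T(v)$ is a minimal ordered sub-quadrilateral of $Q$. Cutting $Q$ along $\link_T(v)$ (and, if necessary, along further edges connecting $\link_T(v)$ to $\partial Q$) yields $\str_T(v)$ together with one or more smaller sub-regions, each a triangulated quadrilateral with strictly fewer interior vertices than $Q$. Those sub-regions that are themselves proper quadrilaterals I would order via the induction hypothesis; those whose boundary contains a degree-$4$ vertex of $T$ instead match the shape of some $\mathcal{Q}_m$ and are resolved by Lemma~\ref{lem:Qk}. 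I would then iteratively apply the Merge Lemma (Lemma~\ref{lem:red2}) to combine pairs of adjacent ordered sub-quadrilaterals, using the Transport Lemma (Lemma~\ref{lem:transport}) whenever necessary to redistribute interior vertices so that the merging hypotheses are satisfied, until a single ordered $Q'$ with the prescribed boundary $a\mbox{-}c\mbox{-}b\mbox{-}d\mbox{-}a$ emerges.

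If instead $Q$ has no interior vertex of degree four, the plan is first to perform a preliminary flip that creates one. For any interior edge $uv$ with adjacent triangles $uvx$ and $uvy$, flagness of $T$ already forces $xy$ to be a non-edge, so the flip $uv \mapsto xy$ is combinatorially possible; it preserves flagness iff $x$ and $y$ share no common neighbour in $T$ outside $\{u,v\}$. A careful choice of interior edge, based on the local degree sequence inside $Q$, always provides such a legal flip that strictly decreases the total interior degree sum; iterating eventually produces a degree-$4$ interior vertex in $Q$, reducing to the previous case.

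The hard part will be the bookkeeping in the first inductive case: verifying that after cutting along $\link_T(v)$ every sub-region is either a proper quadrilateral or a $\mathcal{Q}_m$, and checking that the precise hypotheses of Lemmas~\ref{lem:transport}, \ref{lem:Qk} and \ref{lem:red2} (corner degrees at least $5$, diagonal paths ending at the correct vertex, minimum interior vertex counts) hold at every merging step. In particular, when $\link_T(v)$ lies entirely in the interior of $Q$, its removal leaves an annular residue that must be further split using an auxiliary induced $4$-cycle of $T$; this auxiliary cycle may not be present outright and could require a preparatory flip following the strategy of the last paragraph of the proof of Lemma~\ref{lem:induced}.
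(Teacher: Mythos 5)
Your plan inducts on $k$ as the paper does, and correctly identifies the supporting role of the Transport, Merge, and $\mathcal{Q}_k$ lemmas, but the heart of your reduction has two genuine gaps.

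\textbf{The annular case cannot be repaired the way you describe.} You propose cutting $Q$ along $\link_T(v)$ for an interior degree-four vertex $v$. When $\link_T(v)$ lies entirely in the interior of $Q$, the complement of $\str_T(v)$ in $Q$ is an annulus with a $4$-cycle on each boundary component. Cutting an annulus along a path of length $p$ joining its two boundary circles produces a single disk whose boundary has length $4+4+2p \geq 8$, never a $4$-cycle, so the residue is not a quadrilateral and neither the induction hypothesis nor Lemma~\ref{lem:Qk} applies. There is no ``auxiliary induced $4$-cycle'' that rescues this: the obstruction is a counting one, not a matter of finding the right cycle. The paper sidesteps this entirely by never cutting around $v$; instead it first arranges (by flips) for $a$ and $b$ to have a common neighbour $e$ inside $Q$, and then cuts along the length-two path $a\mbox{-}e\mbox{-}b$. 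That path has both endpoints on $\partial Q$, so the cut produces two genuine quadrilaterals sharing the adjacent edges $ae$ and $be$, which is exactly the hypothesis of Lemma~\ref{lem:red2}.

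\textbf{The ``no degree-four vertex'' branch is the content of the lemma, not a preprocessing step.} You assert that a legal, flagness-preserving flip that strictly decreases the interior degree sum always exists; this is precisely the kind of statement that must be proved, and it is far from automatic. A flip $uv\mapsto xy$ only decreases the interior degree sum if at least one of $x,y$ lies on $\partial Q$, and flagness can block every such candidate flip when $x$ and $y$ share a common neighbour. The paper's ``Claim'' and its proof (the multi-step procedure that organises the neighbours $a_0,\dots,a_m$ of $a$ in $Q$, resolves the peak/valley structure of sub-quadrilaterals $Q_1,\dots,Q_p$, and strictly grows the degree of $a$ in $Q$) is essentially the rigorous form of the assertion you are waving at, and it constitutes the bulk of the proof. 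Without it your induction has no engine.

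In short, the route through $\link_T(v)$ is structurally flawed because of the annulus, and the fallback route lacks a proof of its key existence claim. Reorienting the decomposition around a common neighbour of two opposite boundary vertices, as the paper does, is what makes the Merge Lemma applicable and the induction close.
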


Note that, in $T'$, neither $Q'$ nor $R$ need to be proper quadrilaterals. However, both $Q'$ and $R$ contain interior vertices. In particular, each of $a$, $b$, $c$, and $d$ is contained in at least two triangles of both $Q'$ and $R$.
We deal with this issue separately whenever we need to, namely in the proof of Theorem~\ref{thm:pachnerGraph}.

\begin{proof}
  We prove this statement by induction on the number $k$ of interior vertices in $Q$. First note that $k > 0$, and that the statement is true for $k \leq 2$.
	
  Let $a\mbox{-}c\mbox{-}b\mbox{-}d\mbox{-}a$ be the boundary of a quadrilateral $Q$ in $T$ with $k \geq 3$ interior vertices, such that $\deg_{T} (a),\deg_{T} (b), \deg_{T} (c),\deg_{T} (d) \geq 5$. Since $a\mbox{-}c\mbox{-}b\mbox{-}d\mbox{-}a$ is induced, $ab$ and $cd$ cannot be edges of~$T$.
	
  \medskip

  \noindent {\bf Claim}: There exist a triangulation $T'$ with $T \sim T'$, such that $T' = Q' \cup R$, and in the interior of $Q'$ either $a$ and $b$ or $c$ and $d$ have at least one common neighbour.
	
  \medskip
  We first complete the proof of the lemma assuming the claim is true. This is then followed by a proof of the claim. We can thus assume that we have an $n$-vertex flag $2$-sphere $T'$, $T \sim T'$, such that either $a$ and $b$ or $c$ and $d$ have at least one common neighbour in $Q'$.

  Assume that there exist at least one common neighbour of $a$ and $b$ (the case that $c$ and $d$ have at least one common neighbour is completely analogous). If all such neighbours are of degree four, all interior vertices must be neighbours of $a$ and $b$ of degree four and we are done. Otherwise, choose a common neighbour $e$ of degree at least five, and split $Q'$ into two smaller quadrilaterals $Q_1$ and $Q_2$ with boundaries $e\mbox{-}a\mbox{-}c\mbox{-}b\mbox{-}e$ and $e\mbox{-}a\mbox{-}d\mbox{-}b\mbox{-}e$ respectively. Without loss of generality, let $Q_2$ be the quadrilateral with at least three triangles containing $e$.

  If $Q_1$ has interior vertices, use the induction hypothesis to obtain a $2$-sphere $T''$, $T' \sim T''$, in which $Q_1$ is transformed into a quadrilateral $Q'_1$ with boundary $e\mbox{-}a\mbox{-}c\mbox{-}b\mbox{-}e$, $T'\setminus Q_1 = T''\setminus Q'_1$, and in which all interior vertices of $Q'_1$ have degree four. In $T''$ vertex $d$ is still of degree at least five, vertices $a$ and $b$ must be of degree at least six, and vertex $e$ must be of degree at least five since at least three triangles containing $e$ are outside $Q'_1$. In particular, $Q_2$ is proper and we can apply the induction hypothesis to $Q_2$ to obtain a triangulated $2$-sphere $T'''$ with two ordered quadrilaterals $Q'_1$ and $Q'_2$ joined along two adjacent edges. Use Lemma~\ref{lem:red2} to merge both quadrilaterals, or conclude that $T \sim A_n$. We have that $T \not \sim \Gamma_n$, since $\Gamma_n$ does not split into to proper quadrilaterals, as required by the statement of Lemma~\ref{lem:organise}.
	
  Hence, without loss of generality let $Q_1$ be without interior vertices. Use the induction hypothesis to transform $Q_2$ into $Q'_2$ with only degree four vertices inside. Now either $e$ is of degree four, all interior vertices of $Q' = Q_1 \cup Q'_2$ are of degree four, and we are done. Or $Q'$ is isomorphic to $\mathcal{Q}_{k}$ and, by Lemma~\ref{lem:Qk}, can be transformed into a quadrilateral containing only degree four vertices (or $T \sim A_n $), and again we are done.

  \medskip

  \noindent {\bf Proof of the claim}: Refer to Figure~\ref{fig:quads}. In the following procedure we always denote the flag $2$-sphere by $T$ and the quadrilateral enclosed by $a\mbox{-}c\mbox{-}b\mbox{-}d\mbox{-}a$ by $Q$, although both objects are altered in the process.
	
  \begin{enumerate}

    \item Denote all neighbours of $a$ in $Q$ from left to right by $c=a_0, a_1, \ldots , a_m=d$.

    \item If $a_0$ and $a_m$ have a common neighbour in $Q$ other than $a$ and $b$ we are done.
		
    \item If no such neighbour exists, let $1 \leq j \leq m-1$ be the largest index for which $a_0$ and $a_{j}$ have common neighbours outside the star of $a$.

      By the planarity of $Q$, there exist an outermost neighbour $x_1$ in $Q$, bounding a quadrilateral $x_1\mbox{-}a_0\mbox{-}a\mbox{-}a_j\mbox{-}x_1$ that contains all other common neighbours of $a_0$ and $a_j$. Note that, in this case, $a_j$ must be of degree at least five. If $x_1 = b$, $a$ and $b$ have a common neighbour and we are done. If $x_1 \neq b$, then there is at least one triangle inside $Q$ containing $a_0$ but not contained in the quadrilateral inside $Q$ and bounded by $x_1\mbox{-}a_0\mbox{-}a\mbox{-}a_j\mbox{-}x_1$. In particular, $a_0$ is of degree at least five in $T$ (however $T$ might have changed during this proof).
		
    \item If the quadrilateral inside $Q$ and bounded by $x_1\mbox{-}a_0\mbox{-}a\mbox{-}a_j\mbox{-}x_1$ does not contain interior vertices, we must have $j=1$ and the quadrilateral consists of the two triangles $a_0a_1a$ and $a_0a_1x_1$. Note that $x_1 \neq a_i$ by the flagness of the triangulation, and $x_1 a_i$ is a non-edge for $2 \leq i \leq m$ by construction of the procedure.
		
      As explained in detail above, both $a_0$ and $a_1$ are of degree at least five, and $a$ and $x_1$ do not have common neighbours other than $a_0$ and $a_1$. Hence, we can perform flip $a_0 a_1 \mapsto a x_1$ which strictly increases the degree of $a$ inside $Q$. We then start over at step 1 with $a'_0 = a_0, a'_1 = x_1, a'_2 = a_1 , \ldots a'_{m+1} = a_m$.

    \item If the quadrilateral inside $Q$ and bounded by $x_1\mbox{-}a_0\mbox{-}a\mbox{-}a_j\mbox{-}x_1$, say $Q_1$, contains interior vertices, we have $\deg_T (x_1) \geq 5$. Moreover, as explained above $\deg_T (a_0), \deg_T (a_j) \geq 5$, and $\deg_T (a) \geq 5$ by assumption. In particular, $Q_1$ is a proper quadrilateral with fewer interior vertices than $Q$. We can thus use the induction hypothesis to rearrange the interior of $Q_1$ to contain only interior vertices of degree four. Note that, in the new triangulation, all of $x_1$, $a_0$, $a$ and $a_j$ still have degree at least five (i.e., the rearranged quadrilateral is an ordered proper quadrilateral). This is important later on in the proof.
		
    \item After rearranging $Q_1$, bounded by $x_1\mbox{-}a_0\mbox{-}a\mbox{-}a_j\mbox{-}x_1$, into an ordered proper quadrilateral, repeat steps 3 to 5 by looking for the largest index $j < \ell \leq m-1$ for which $a_j$ and $a_{\ell}$ have common neighbours outside the star of $a$. Note that, whenever we flip an edge in step 4 we start over at step 1 with a strictly larger degree of vertex $a$ in $Q$.	
  \end{enumerate}
	
  This process either yields the desired result, or it terminates with $Q$ having a sequence of smaller ordered quadrilaterals $Q_1, \ldots , Q_p$ around vertex $a$, $p > 1$, see Figure~\ref{fig:quads}.
	
  Call the ``peaks'' of the quadrilaterals $x_1 , \ldots , x_p$, and the ``valleys'' between quadrilaterals $a_0 = y_0 , \ldots , y_{p} = a_m$ (cf. Figure~\ref{fig:quads}). By construction, all $x_i$, $ 1 \leq i \leq p$, and $y_j$, $0 \leq j \leq p$ are of degree at least five (see step 5 above). That is, the quadrilaterals $Q_i$, $1 \leq i \leq p$, are ordered and proper.
	
  \begin{figure}[htbp]
    \centerline{\includegraphics[width=.7\textwidth]{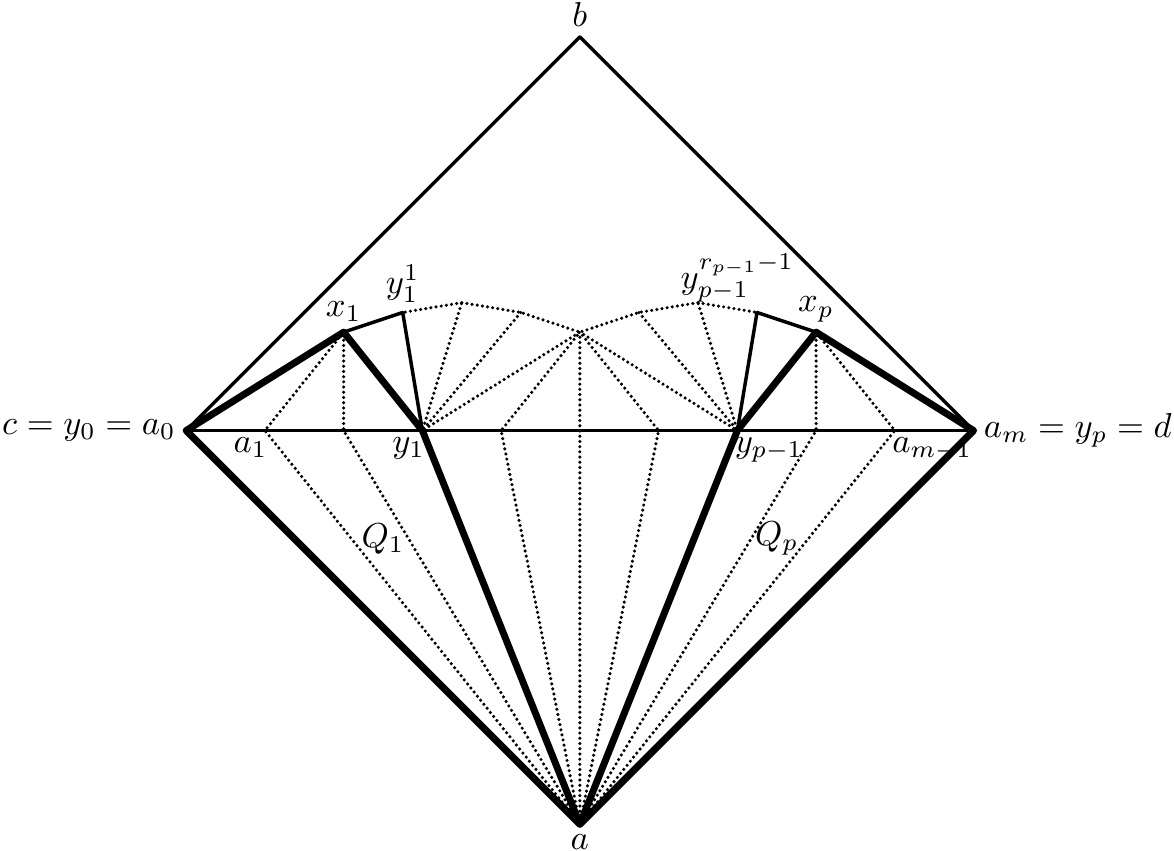}}
    \caption{The quadrilateral $Q$ after performing steps $1\mbox{-}6$, and after reorganising the interior vertices of quadrilaterals $Q_i$, $1\leq i \leq p$.\label{fig:quads}}
  \end{figure}		
	
  Recall that all quadrilaterals $Q_i$, $1\leq i \leq p$, contain only degree four interior vertices. We want all of the diagonal paths of $Q_i$, $1\leq i \leq p$, to run from $y_{i-1}$ to $y_{i}$. If $Q_i$ only has one interior vertex, this is automatically the case. Thus, assume that there exist a pair of quadrilaterals $Q_i$ and $Q_{i+1}$, $1 \leq i \leq p-1$, sharing common edge $a y_i$, and, without loss of generality, assume that $Q_i$ has a diagonal path from $a$ to $x_i$ of length at least two.

  Observe that in this particular situation, both $a$ and $y_i$ must be of degree at least six. Hence we can apply Lemma~\ref{lem:transport} to ``transport'' all but one interior vertices of $Q_i$ to the diagonal path of $Q_{i+1}$, and declare the diagonal path in $Q_i$ to run from $y_{i-1}$ to $y_i$. If the diagonal path of $Q_{i+1}$ connects $y_i$ with $y_{i+1}$ we are done. If not, note that, again, both $a$ and $y_i$ must be of degree at least six. We proceed by transporting all but one interior vertices of $Q_{i+1} $ onto the new diagonal path from $y_{i-1}$ to $y_{i}$ of $Q_{i}$, and declare the diagonal path in $Q_{i+1}$ to run from $y_{i}$ to $y_{i+1}$. Repeating this with all pairs of quadrilaterals containing at least one diagonal intersecting $a$ yields the desired result.
  Note that this procedure terminates with the degree of $a$ being at least as large as it was before starting the process at step 1 (that is, the degree of $a$ in $Q$ is at least $m+1$).
	
  \medskip	
  In Figure~\ref{fig:quads}, denote the vertices in the upper link of $y_j$ by $x_j = y_j^0, y_j^1, y_j^2, \ldots y_j^{r_j} = x_{j+1}$. By construction we have $r_j > 0$ for all $j$.
	
  Refer to Figure~\ref{fig:final}(a). Since $p>1$, $x_1$, $y_1 = a_j$, and $x_2$ are in the interior of $Q$. Moreover, both $a_j = y_1$, $j > 1$, and $x_{2}$ are of degree at least five and, by design of the procedure, $y_{0} y_1^{\ell}$, $1 \leq \ell \leq r_1$, is a non-edge (otherwise $y_1^{\ell}$ is a better choice for $x_1$). It follows that we can perform the flips $x_1 a_j \mapsto a_{j-1} y_1^1$, $x_1 a_{j-1} \mapsto a_{j-2} y_1^1$, etc., all the way down to $x_1 a_2 \mapsto a_1 y_1^1$ (see Figure~\ref{fig:final}(b)). Note that $y_1$ and $x_1$ are now both of degree at least four, the degree of $y_1^1$ is larger than before, $a_1$ is of degree five, and all other degrees have not changed. Since $x_1 a_i$, $2 \leq i \leq m$, must be non-edges, $a$ and $x_1$ do not have common neighbours. We can thus perform the flip $a_0 a_1 \mapsto a x_1$, see Figure~\ref{fig:final}(c).
	
  \begin{figure}[htbp]
    \centerline{\includegraphics[width=\textwidth]{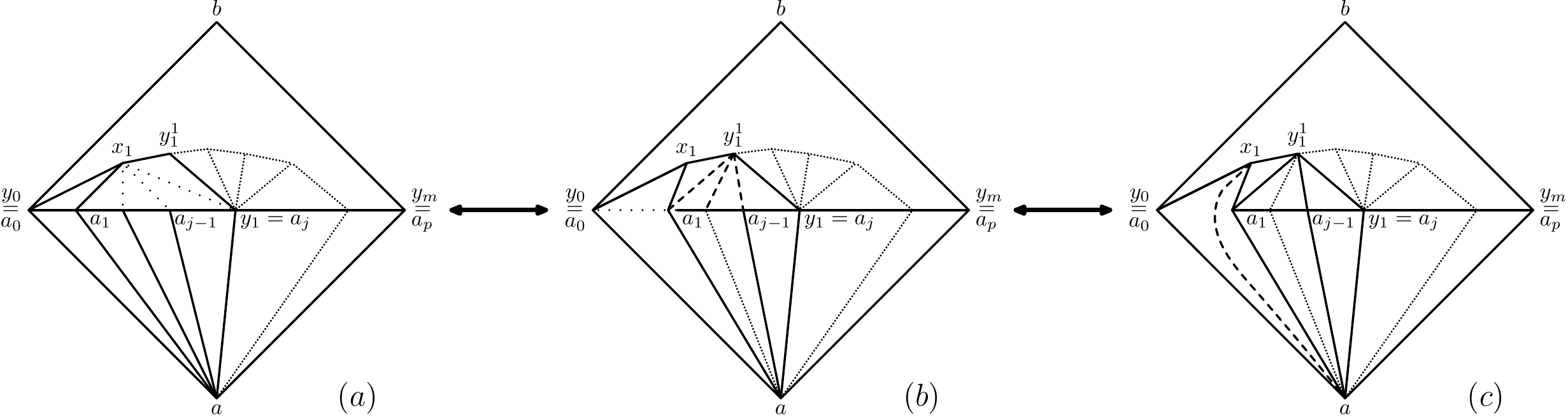}}
    \caption{Increasing the size of the link of $a$.\label{fig:final}}
  \end{figure}		
	
  This strictly increases the degree of $a$. We now start over with our procedure at step 1.

  Since there are only finitely many vertices inside $Q$, this procedure must terminate with $Q$ containing a common neighbour of $a$ and $b$. This proves the claim and completes the proof of the lemma.
\end{proof}

\begin{proof}[Proof of Theorem~\ref{thm:pachnerGraph}]
  To prove the theorem it suffices to show that $T \sim A_n$ for all $T \in \mathcal{F}_n \setminus \{ \Gamma_n \}$.

  Apply Lemma~\ref{lem:induced} to split $T$ into two proper quadrilaterals $T = Q \cup R$. This is always possible since $T \neq \Gamma_n$. Use Lemma~\ref{lem:organise} to turn all interior vertices of both $Q$ and $R$ into vertices of degree four.

  If, after the first or second application of Lemma~\ref{lem:organise}, any of the boundary vertices of $Q$ (or $R$) are of degree four, we grow $Q$ (or $R$) such that eventually it is bounded by vertices of degree at least five, or $T \sim \Gamma_n$. However, since all edge flips on $\Gamma_n$ produce a non-flag $2$-sphere triangulation, the latter case implies $T = \Gamma_n$, a contradiction.

  Thus, $T$ can be transformed into a triangulation $T''$ of the $2$-sphere which splits into two ordered proper quadrilaterals. This corresponds to the cases $a=b$ in the proof of Lemma~\ref{lem:transport}. In particular, either $T'' = \Gamma_n$, which is impossible, $T'' = A_n$, or the degrees of all vertices of the separating induced $4$-cycle satisfy the preconditions of Lemma~\ref{lem:transport}, and we can conclude that $T \sim A_n$.
\end{proof}

\section{The Pachner graph \boldmath{$\mathcal{S}_n$} of \boldmath{$n$}-vertex stacked $2$-spheres} \label{sec:stacked}

Every pair of $n$-vertex stacked $2$-spheres is, by definition, connected in the Pachner graph of stacked $2$-spheres by a sequence of $(n-4)$ $2$-moves, followed by a sequence of $(n-4)$ $0$-moves. However, if we look at the Pachner graph  $\mathcal{S}_n$ of $n$-vertex stacked $2$-spheres, the situation is different.

In this section we show that the structure of $\mathcal{S}_n$ is very special. More precisely, we prove that $\mathcal{S}_n$ is not connected for $n\geq 7$ (Corollary~\ref{coro:cor3}), and that the number of connected components rapidly increases with the number of vertices (Corollary~\ref{coro:cor4}). More precisely, for $n$ fixed, the number of connected components is at least as large as the number of isomorphism classes of trees of maximum degree at most four on $\lfloor\frac{n-5}{3}\rfloor$ vertices. See Table~\ref{fig:stackedPachner} for the number and cardinalities of connected components of $\mathcal{S}_n$ for $n \leq 14$.

\begin{table}[htb]
\begin{center}
\begin{tabular}{|r|r|r|l|}
	\hline
	$n$		& \#($\mathcal{S}_n$) & \# cc & size of connected components \\
	\hline
	\hline
    $4$		& 	$1$		& 	$1$		& 	$1$ \\
	\hline
    $5$		& 	$1$		& 	$1$		& 	$1$ \\
	\hline
    $6$		& 	$1$		& 	$1$		& 	$1$ \\
	\hline
	$7$	&	$3$	&	$1$	&	$3$\\
	\hline
	$8$	&	$7$	&	$2$	&	$1,6$\\
	\hline
	$9$	&	$24$	&	$2$	&	$1,23 $\\
	\hline
	$10$	&	$93$	&	$3$	&	$3, 4, 86 $\\
	\hline
	$11$	&	$434$	&	$5$	&	$1, 7, 10, 19, 397 $\\
	\hline
	$12$	&	$2110$	&	$8$	&	$1, 2, 6, 43, 46, 57, 82, 1873$\\
	\hline
	$13$	&	$11002$	&	$15$	&	$1, 2, 2, 3, 4, 6, 6, 7, 57, 222$ \\
  &&&                       $223, 246, 326, 394, 9503$\\
	\hline
	$14$ & $58713$	& $33$ & $1, 1, 3, 4, 4, 4, 5, 6, 6, 6, 6, 7, 7, 9, 9, 9, 12,$\\
  &&&                       $ 15, 19, 27, 28, 36, 36, 246, 304,  339, 757,$ \\
  &&&                       $ 1165, 1182, 1571, 1944, 1987, 48958$\\
	\hline
\end{tabular}
\end{center}
\caption{Number and cardinalities of the connected components of $\mathcal{S}_n$ for $n \leq 14$. \label{fig:stackedPachner}}
\end{table}

For a stacked $2$-sphere $S$, let $\SB$ be the unique stacked $3$-ball whose boundary is $S$, see Lemma~\ref{prop:bd-lbt}. If $\alpha$ is a triangle of $S$ then $\alpha$ is a face of a unique tetrahedron of $\SB$ (i.e., a clique of size four in the edge graph of $S$). We denote this unique tetrahedron by $\thickbar{\alpha}$. Naturally, $\thickbar{\alpha}$ is a node in the dual graph $\Lambda(\SB)$.

\begin{theorem} \label{theo:bsm-s2s}
Let $S$ be a stacked $2$-sphere. Let $\alpha=abc$, $\beta=abd$ be two triangles of $S$. Let $\thickbar{\alpha}$ (resp., $\thickbar{\beta}$) be the unique tetrahedron in $\SB$ containing $\alpha$ (resp., $\beta$). Then $cd$ is not an edge of $S$ and the $2$-sphere $T$ obtained from $S$ by the edge flip $ab\mapsto cd$ is stacked if and only if the nodes $\thickbar{\alpha}$ and $\thickbar{\beta}$ of $\Lambda (\SB)$ are adjacent in $\Lambda(\SB)$.
\end{theorem}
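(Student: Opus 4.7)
For $(\Leftarrow)$, assume $\thickbar{\alpha}$ and $\thickbar{\beta}$ are adjacent in $\Lambda(\SB)$. I would first pin down the shared triangle: since $\{a,b\}\subseteq \thickbar{\alpha}\cap\thickbar{\beta}$, the common third vertex cannot be $c$ or $d$, for otherwise the boundary triangle $abc\in S$ or $abd\in S$ would be forced to be interior (contained in two tetrahedra of $\SB$). Hence $\thickbar{\alpha}$ and $\thickbar{\beta}$ share a common apex $x$, and their shared face is the interior triangle $abx$. Applying Lemma~\ref{coro:interiorface} to $abx$ (whose two apices are $c$ and $d$) separates $c$ from $d$ in $\SB[V(\SB)\setminus\{a,b,x\}]$, which rules out $cd\in E(S)$. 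I would then define $\TB:=(\SB\setminus\{\thickbar{\alpha},\thickbar{\beta}\})\cup\{acdx,\,bcdx\}$, the effect of a local bistellar $2$-$3$ move on the ball that replaces two tetrahedra glued along $abx$ by two glued along $cdx$. A routine verification shows that $\TB$ is a simplicial $3$-ball, $\partial\TB = T$, and $\Lambda(\TB)$ is again a tree (the edge $\thickbar{\alpha}\mbox{-}\thickbar{\beta}$ becomes $acdx\mbox{-}bcdx$, while the unchanged external faces $acx, bcx, adx, bdx$ reattach the rest of the dual tree consistently). Thus $T$ is stacked.

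For $(\Rightarrow)$, assume $cd\notin E(S)$ and $T$ is stacked. Since every edge of $\SB$ lies on $\partial\SB$, the link of $ab$ in $\SB$ is a path from $c$ to $d$; write it as $c=w_0,w_1,\dots,w_k=d$, so the tetrahedra of $\SB$ containing $ab$ are exactly $\sigma_i = abw_{i-1}w_i$ for $i=1,\dots,k$, with $\thickbar{\alpha}=\sigma_1$ and $\thickbar{\beta}=\sigma_k$. Adjacency of $\thickbar{\alpha}$ and $\thickbar{\beta}$ in $\Lambda(\SB)$ is equivalent to $k=2$, and $cd\notin E(S)$ excludes $k=1$ (which would give $\sigma_1=abcd$), so $k\geq 2$. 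I would derive a contradiction from $k\geq 3$, using the preliminary ``no chord'' observation that $w_iw_j\notin E(S)$ whenever $|i-j|\geq 2$: otherwise $\{a,b,w_i,w_j\}$ would be a $4$-clique of the edge graph of $S$ and hence, by Lemma~\ref{prop:bd-lbt}, a further tetrahedron of $\SB$ containing $ab$, contradicting the enumeration $\{\sigma_1,\dots,\sigma_k\}$; in particular $w_1d\notin E(S)$ when $k\geq 3$. The contradiction is then produced by the single triangle $\tau:=\{a,c,w_1\}$, a face of $\sigma_1$. Its edges $ac, aw_1, cw_1$ all lie in $E(S)\setminus\{ab\}\subseteq E(T)$, so $\tau$ is a $3$-clique of the edge graph of $T$ and thus a triangle of $\TB$ by Lemma~\ref{prop:bd-lbt} applied to the stacked $2$-sphere $T$. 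I would then enumerate the common neighbours of $\{a,c,w_1\}$ in $E(T)$: in $E(S)$ these are precisely the apices of tetrahedra of $\SB$ containing $\tau$, namely $b$ (from $\sigma_1$) plus at most one further vertex $z$ (if $\tau$ is also a face of a second tetrahedron of $\SB$). Passing from $E(S)$ to $E(T)$, the vertex $b$ is lost since $ab\notin E(T)$, and the only potentially new common neighbour produced by the added edge $cd$ is $v=d$, which would require $w_1d\in E(S)$ -- forbidden by the no-chord observation. Hence $\tau$ lies in at most one tetrahedron of $\TB$. Since $w_1\notin\{a,b,c,d\}$ for $k\geq 3$, a quick check gives $\tau\notin\{abc,abd,acd,bcd\}$, so $\tau\in T\iff\tau\in S$. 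If $\tau\in S$ then $\tau$ is boundary in $\SB$, $z$ does not exist, and $\tau$ lies in $0$ tetrahedra of $\TB$, contradicting that the boundary triangle $\tau$ of the $3$-ball $\TB$ must be a face of exactly one tetrahedron. If $\tau\notin S$ then $\tau$ is interior in $\SB$, $z$ exists, and $\tau$ lies in exactly one tetrahedron of $\TB$, making $\tau$ a boundary triangle of $\TB$ and hence $\tau\in T$, contradicting $\tau\notin S$ together with $\tau\in T\iff\tau\in S$. Both cases are contradictions, so $k=2$ and $\thickbar{\alpha},\thickbar{\beta}$ are adjacent in $\Lambda(\SB)$.

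The principal difficulty is the backward direction, and in particular the choice of the local triangle $\tau=\{a,c,w_1\}$ whose extension data in $E(T)$ can be squeezed to contradict the stacked-ball structure of $\TB$; the ``no chord'' property of the link of $ab$ in $\SB$ is exactly what eliminates the single spurious common neighbour ($v=d$) that the inserted edge $cd$ might otherwise create.
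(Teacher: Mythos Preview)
Your argument is correct. The forward direction is essentially the paper's: both identify the shared apex (the paper calls it $e$, you call it $x$) and replace $\{\thickbar{\alpha},\thickbar{\beta}\}$ by $\{acdx,bcdx\}$; the only cosmetic difference is that the paper excludes $cd\in E(S)$ via a $5$-clique contradiction with Lemma~\ref{prop:bd-lbt}, whereas you invoke Lemma~\ref{coro:interiorface}. The backward direction, however, genuinely diverges. The paper applies Lemma~\ref{coro:interiorface} to each interior triangle $abe_i$ along the link path $c=e_0\mbox{-}\cdots\mbox{-}e_{k+1}=d$ to confine the common neighbours of $c$ and $d$ in $E(T)$ to $\{a,b,e_1\}\cap\{a,b,e_k\}$, and then observes that the apices of the tetrahedra of $\TB$ over $acd$ and $bcd$ supply two such neighbours outside $\{a,b\}$, forcing $e_1=e_k$. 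You bypass Lemma~\ref{coro:interiorface} here entirely: your no-chord property comes straight from the clique description in Lemma~\ref{prop:bd-lbt}, and the contradiction is extracted locally from the single triangle $\tau=acw_1$, whose tetrahedron count in $\TB$ is off by one in either the boundary or the interior case. The paper's route is more symmetric in $c$ and $d$ and immediately exhibits the common apex $e$; yours is more self-contained, needing only Lemma~\ref{prop:bd-lbt} and a boundary/interior dichotomy for one well-chosen face.
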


\begin{proof}
Suppose $\thickbar{\alpha}$ and $\thickbar{\beta}$ are adjacent in the dual graph $\Lambda(\SB)$, $\thickbar{\alpha} \neq \thickbar{\beta}$.
Then there exists a vertex $e$ of $S$ such that $\thickbar{\alpha}=abce$ and $\thickbar{\beta}=abde$ ($e \not\in\{d, c\}$ since $\thickbar{\alpha} \neq \thickbar{\beta}$). If $cd$ is an edge of $S$ then $\{a, b, c, d, e\}$ is  a clique in the edge graph of $S$ and hence, by Lemma~\ref{prop:bd-lbt}, $abcde$ is a simplex of $\SB$. This is not possible since $\SB$ is $3$-dimensional.

Let $B = \SB \cup abcd$. Since $\SB \cap abcd$ is a $2$-disk, $B$ is a triangulated $3$-ball. The link ${\rm lk}_{B}(ab)$ is the induced $3$-cycle $c\mbox{-} d\mbox{-}e\mbox{-} c$ in $B$. Let $D$ be obtained from $B$ by the $3$-dimensional bistellar $2$-move that replaces the three tetrahedra $abcd$, $abce$ and $abde$ around edge $ab$ with the two tetrahedra $\thickbar{\gamma}=acde$ and $\thickbar{\delta}=bcde$ sharing triangle $cde$, denoted by $ab\mapsto cde$ in short. By construction we have (i) $\partial D=T$, where $T$ is the $2$-sphere obtained from $S$ by the edge flip $ab \mapsto cd$ and (ii) all edges of $D$ are boundary edges ($ab$ is the only edge of $B$ not in the boundary which is removed by the bistellar move $ab\mapsto cde$) and thus $T$ is stacked (cf. Section~\ref{ssec:stacked}).


\medskip

Conversely, suppose $cd$ is not an edge of $S$ and the triangulated $2$-sphere $T$ obtained from the stacked $2$-sphere $S$ by the edge flip $ab\mapsto cd$ is a stacked $2$-sphere. Observe that both $\gamma = acd$ and $\delta = bcd$ are triangles of $T$.

Since $ab, abc, abd\in S = \partial \SB$, ${\rm lk}_{\SB}(ab)$ is a path in $E(\SB)$ from $c$ to $d$. Let ${\rm lk}_{\SB}(ab) = e_0\mbox{-}e_1\mbox{-}\cdots \mbox{-} e_k\mbox{-} e_{k+1}$ for some $k\geq 1$, where $e_0=c$ and $e_{k+1} =d$. We have that $abce_1 = abe_0e_1$, $abe_1e_2, \dots, abe_{k-1}e_k$, $abe_ke_{k+1} = abde_k$ are tetrahedra in $\SB$. Thus, $abe_1, \dots, abe_k$ are interior triangles of $\SB$. By Lemma~\ref{coro:interiorface}, $\SB[V(S)  \setminus\{a, b, e_1\}]$ has two components, one contains $e_0$ and the other contains $e_2$. Thus, the common neighbours of $e_0$ and $e_2$ in $E(\SB) = E(S)$ are $a, b$ and $e_1$. Similarly, the set of common neighbours of $e_{i-1}$ and $e_{i+1}$ is $\{a, b, e_i\}$ for $1\leq i\leq k$. This implies that the set of common neighbours of $c=e_0$ and $d=e_{k+1}$ in $E(T)$ is $\{a, b, e_1\} \cap \{a, b, e_k\}$ (note that $E(S)$ differs from $E(T) = E(\TB)$ only in edges $ab$ and $cd$).

On the other hand the triangles $\gamma = acd = ae_0e_{k+1}$ and $\delta = bcd = be_0e_{k+1}$ are contained in unique tetrahedra $\thickbar{\gamma} = acdx$ and $\thickbar{\delta} = bcdy$ of $\TB$ and hence $a$, $b$, $x$ and $y$ are common neighbours of $c$ and $d$. By the above this is only possible if $e:=x=y=e_1 = e_k$. In particular, ${\rm lk}_{\SB}(ab)$ is a path from $c$ to $d$ of length two, $\thickbar{\alpha}=abce$, $\thickbar{\beta}=abde$, and in particular $\thickbar{\alpha}$ and $\thickbar{\beta}$ are adjacent in $\Lambda(\SB)$.
\end{proof}

\begin{remark}
  For an edge flip $ab \mapsto cd$ on a stacked $2$-sphere $S$ to be valid, we must have $\alpha=abc, \beta=abd \in F(S)$ and $cd\not\in E(S)$. We have seen that an $n$-vertex $2$-sphere $T$ can be obtained from a stacked $2$-sphere $S$ by an edge flip $ab \mapsto cd$ (that is, the edge flip is valid) and $T$ is stacked if and only if the nodes corresponding to tetrahedra $\thickbar{\alpha}$ and $\thickbar{\beta}$ of $\SB$ are adjacent in $\Lambda (\SB)$.

  Note that we can replace this latter condition in Theorem~\ref{theo:bsm-s2s} by any of the following equivalent conditions (some are immediate, some follow from Lemma~\ref{coro:interiorface}):
  \begin{itemize}
    \item The path in the link of $ab$ from $c$ to $d$ is of length exactly two.
    \item Edge $ab$ is contained in exactly two tetrahedra of $\SB$.
    \item The vertices $a$ and $b$ have exactly three common neighbours in $S$.
    \item There exists a unique vertex $e \not\in\{c, d\}$ such that $ae$ and $be$ are edges of $S$.
  \end{itemize}
  While some of these conditions are easier to grasp, others are more efficient for implementations. It is thus useful to keep all of them in mind.
\end{remark}

\begin{remark} \label{remark:newtree}
Let $T$ be obtained from $S$ by the edge flip $ab\mapsto cd$ and $e$, $\alpha = abc$, $\beta = abd$, $\gamma = acd$, $\delta = bcd$ as in the proof of Theorem~\ref{theo:bsm-s2s}. Then $\thickbar{\alpha} = abce$, $\thickbar{\beta}= abde\in \SB$ and $\thickbar{\gamma} = acde$, $\thickbar{\delta}= bcde \in \TB$. Moreover, let the (up to) two nodes adjacent to $\thickbar{\alpha}$ in $\Lambda (\SB)$ be $acex$ and $bcey$, and let the (up to) two nodes adjacent to $\thickbar{\beta}$ in $\Lambda (\SB)$ be $adez$ and $bdew$.

\begin{figure}[htbp]
  \begin{center}
    \includegraphics[width=0.5\textwidth]{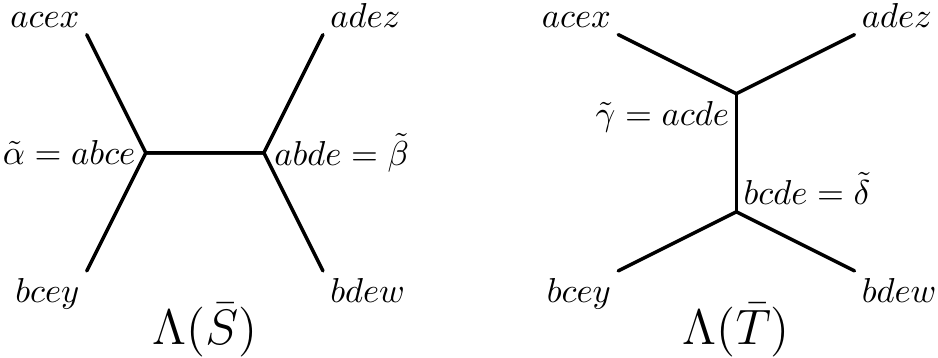}
    \caption{Transformation of dual graph by edge flip $ab \mapsto cd$ in the proof of Theorem~\ref{theo:bsm-s2s}. \label{fig:tree}}
  \end{center}
\end{figure}

Then the dual graph $\Lambda(\TB)$ is the tree build from $\Lambda (\SB)$, with set of nodes $U = \{\sigma \in \SB \,\mid\, \sigma \textrm{ is a tetrahedron }\} \setminus\{\thickbar{\alpha}, \thickbar{\beta}\}) \cup \{\thickbar{\gamma}, \thickbar{\delta}\}$ with all arcs in $\Lambda (\SB)$ adjacent to $\thickbar{\alpha}$ and $\thickbar{\beta}$ removed, and arcs added between $\thickbar{\gamma}$ and $\thickbar{\delta}$ (corresponding to triangle $cde$), $\thickbar{\gamma}$ and $acex$ (corresponding to $ace$), $\thickbar{\delta}$ and $bcey$ ($bce$), $\thickbar{\gamma}$ and $adez$ ($ade$), and  $\thickbar{\delta}$ and $bdew$ ($bde$), see Figure~\ref{fig:tree}.
\end{remark}

\begin{kor} \label{coro:cor1}
Let $S$ be a stacked $2$-sphere, $\alpha=abc$, $\beta=abd$ two triangles of $S$, $\thickbar{\alpha}$ (resp., $\thickbar{\beta}$) the unique tetrahedron of $\SB$ containing $\alpha$ (resp., $\beta$), $\sigma \in \SB$ correspond to a degree four node in $\Lambda(\SB)$, and let $G_1, G_2, G_3, G_4$ be the connected components of $\Lambda(\SB)-\sigma$. If the $2$-sphere $T$ obtained from $S$ by the edge flip $ab\mapsto cd$ is also a stacked $2$-sphere then
\begin{enumerate}[{\rm (i)}]
  \item $\sigma$ is a tetrahedron of $\TB$,
  \item $\sigma$ is a degree four node in $\Lambda(\TB)$,
  \item both $\thickbar{\alpha}$ and $\thickbar{\beta}$ are in one component of $\Lambda(\SB)-\sigma$, say in $G_4$, and
  \item the components of $\Lambda(\TB)-\sigma$ are $G_1, G_2, G_3, G_4^{\prime}$ for some tree $G_4^{\prime}$.
\end{enumerate}
\end{kor}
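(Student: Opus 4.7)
The plan is to apply Theorem~\ref{theo:bsm-s2s} together with Remark~\ref{remark:newtree}, which between them pin down exactly the local modification taking $\Lambda(\SB)$ to $\Lambda(\TB)$: there is a vertex $e$ with $\thickbar{\alpha} = abce$ and $\thickbar{\beta} = abde$, the edge $\thickbar{\alpha}\thickbar{\beta}$ lies in $\Lambda(\SB)$, and after the flip these two nodes are deleted and replaced by $\thickbar{\gamma} = acde$, $\thickbar{\delta} = bcde$ with the incidences described in Remark~\ref{remark:newtree}. All four conclusions will follow from a careful case analysis of this local surgery.

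For (i), note that $\thickbar{\alpha}$ contains the boundary triangle $abc$ and $\thickbar{\beta}$ contains the boundary triangle $abd$; since a tetrahedron has only four triangular faces, each of $\thickbar{\alpha}, \thickbar{\beta}$ has degree at most three in $\Lambda(\SB)$. Because $\sigma$ has degree four, $\sigma \notin \{\thickbar{\alpha}, \thickbar{\beta}\}$, so $\sigma$ persists as a tetrahedron of $\TB$. For (iii), the edge $\thickbar{\alpha}\thickbar{\beta}$ in the tree $\Lambda(\SB)$ is not destroyed when $\sigma \neq \thickbar{\alpha}, \thickbar{\beta}$ is removed, and hence $\thickbar{\alpha}$ and $\thickbar{\beta}$ lie in a common component, which we label $G_4$.

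For (ii), I would examine the four triangular faces of $\sigma$ one at a time. Any face shared in $\SB$ with some $\tau \notin \{\thickbar{\alpha}, \thickbar{\beta}\}$ is still shared with $\tau$ in $\TB$. A face shared with $\thickbar{\alpha} = abce$ must be $ace$ or $bce$: it cannot be $abc$ (a boundary triangle) and it cannot be $abe$ (the only other tetrahedron of $\SB$ containing $abe$ is $\thickbar{\beta} \neq \sigma$). But $ace$ is a face of $\thickbar{\gamma}$ and $bce$ is a face of $\thickbar{\delta}$, so in either case the adjacency is simply re-routed in $\Lambda(\TB)$. The analogous argument handles $\thickbar{\beta}$, whose non-boundary, non-$\thickbar{\alpha}$-shared faces $ade$ and $bde$ appear in $\thickbar{\gamma}$ and $\thickbar{\delta}$ respectively. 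Thus $\sigma$ still has exactly four neighbors in $\Lambda(\TB)$.

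For (iv), I would argue that every alteration from $\Lambda(\SB)$ to $\Lambda(\TB)$ is localized: the only affected nodes are $\thickbar{\alpha}, \thickbar{\beta}, \thickbar{\gamma}, \thickbar{\delta}$ together with the at most four further tetrahedra listed in Remark~\ref{remark:newtree}. By (iii) and the tree structure of $\Lambda(\SB)$, every such further neighbor of $\thickbar{\alpha}$ or $\thickbar{\beta}$ that is not equal to $\sigma$ already lies in $G_4$, since it is connected to $\thickbar{\alpha}\thickbar{\beta}$ by a path avoiding $\sigma$. Consequently $G_1, G_2, G_3$ sit unchanged inside $\Lambda(\TB) - \sigma$, while $G_4$ mutates into some subgraph $G_4'$ containing $\thickbar{\gamma}$ and $\thickbar{\delta}$. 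Since $\Lambda(\TB)$ is a tree (being the dual graph of the stacked $3$-ball $\TB$) and $\sigma$ has degree four in it by (ii), $\Lambda(\TB) - \sigma$ has exactly four components, so the unique one not among $G_1, G_2, G_3$ must be $G_4'$, which is therefore a tree. The main obstacle is the bookkeeping in (iv): one must check that every ``side branch'' hanging off $\thickbar{\alpha}$ or $\thickbar{\beta}$ genuinely lies in $G_4$ rather than in some other $G_i$, which reduces exactly to the fact that $\thickbar{\alpha}\thickbar{\beta} \in \Lambda(\SB)$ and $\sigma \notin \{\thickbar{\alpha}, \thickbar{\beta}\}$.
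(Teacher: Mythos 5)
Your argument is correct and follows essentially the same route as the paper's own proof: identify the local surgery on the dual tree using Theorem~\ref{theo:bsm-s2s} and Remark~\ref{remark:newtree}, show $\sigma \notin \{\thickbar{\alpha}, \thickbar{\beta}\}$, check that each of the four interior faces of $\sigma$ stays interior in $\TB$ (re-routed to $\thickbar{\gamma}$ or $\thickbar{\delta}$ when necessary), and observe that the surgery is confined to the component $G_4$ containing $\thickbar{\alpha}$ and $\thickbar{\beta}$. The only cosmetic difference is that you lean on Remark~\ref{remark:newtree} for the bookkeeping of which tetrahedra and arcs persist, whereas the paper re-derives those facts inline from the clique characterization in Lemma~\ref{prop:bd-lbt}.
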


\begin{proof}
It follows from Theorem~\ref{theo:bsm-s2s} that ${\rm lk}_{\SB}(ab)$ is a path of the from $c\mbox{-}e\mbox{-}d$ and $\thickbar{\alpha} = abce$, $\thickbar{\beta}= abde$ for some vertex $e$. In particular, $\thickbar{\alpha}$ and $\thickbar{\beta}$ are the only two tetrahedra in $\SB$ containing $ab$. Since all the $2$-dimensional faces of $\sigma$ are interior triangles, we have $\sigma\not \in \{\thickbar{\alpha}, \thickbar{\beta} \}$. Thus, $\sigma$ cannot contain the edge $ab$. Since $\sigma$ forms a clique in $E(S)$, this implies that $\sigma$ forms a clique in $E(T)$. Hence $\sigma\in\TB$. This proves part (i).

Observe that $\{a, c, d, e\}$ and $\{b, c, d, e\}$ span cliques in $E(T)$.
Therefore, $\thickbar{\gamma}:=acde, \thickbar{\delta}:=bcde\in\TB$.
Let $\tau$ be a $2$-dimensional face of $\sigma$. Then $\tau$ is an interior face in $\SB$. Let $\tau= \sigma\cap\mu$ for some tetrahedron $\mu\in\SB$. If $ab\not\subset\mu$ then $\mu$ forms a clique in $E(T)$ and hence $\mu\in\TB$. Then $\tau= \sigma\cap\mu$ is an interior triangle of $\TB$. If $ab\subset\mu$ then $\mu$ is
$\thickbar{\alpha}$ or $\thickbar{\beta}$. Assume, without loss, that $\mu= \thickbar{\alpha}=abce$. Since $ab\not\subset\sigma$ and $\mu\cap\sigma$ is a  face of $\mu$, $\tau=\mu\cap\sigma=ace$ or $bce$. Assume, without loss, that $\tau=ace$. Then $\sigma = acex$ for some vertex $x$ and $\tau = \sigma\cap\thickbar{\gamma}$. Thus, $\tau$ is an interior triangle of $\TB$. Thus, each $2$-dimensional face of $\sigma$ is an interior triangle of $\TB$. Part (ii) follows from this.

Part (iii) follows from the fact that $\thickbar{\alpha}$ and $\thickbar{\beta}$ share a triangle in $\SB$ which (necessarily) is not a face of $\sigma$.

The four $2$-dimensional faces of $\thickbar{\gamma} =acde$ are $acd, ace, ade$ and $cde$. Since $cd$ is a non-edge in $\SB$, we have that $acd, cde$ are not in $\SB$ and $ace = \thickbar{\gamma}\cap\thickbar{\alpha}$, $ade = \thickbar{\gamma}\cap \thickbar{\beta}$. Thus, by part (iii), $\thickbar{\gamma}$ is not adjacent to any nodes of $G_1 \cup G_2\cup G_3$. Similarly, $\thickbar{\delta}$ is not adjacent to any nodes of $G_1 \cup G_2\cup G_3$. Part (iv) now follows since the set of nodes of $\Lambda(\TB)$ is $(\{\tau :$ $\tau$ is a tetrahedron in $\SB\}\setminus\{\thickbar{\alpha}, \thickbar{\beta}\})\cup\{\thickbar{\gamma},  \thickbar{\delta}\}$.
\end{proof}

\begin{kor} \label{coro:cor2}
Let $S$ be a stacked $2$-sphere, $T$ a stacked $2$-sphere obtained from $S$ by an edge flip, and let $V_S$ (resp., $V_T$) be the set of degree four nodes in $\Lambda(\SB)$ (resp., in $\Lambda(\TB)$). Then the induced subgraphs $\Lambda(\SB)[V_S]$ and $\Lambda(\TB)[V_T]$ are isomorphic.
\end{kor}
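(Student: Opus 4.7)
The plan is to show that, as sets, $V_S = V_T$ (viewing both as subsets of the common collection of tetrahedra), and that the adjacency inside these sets is preserved by the flip. The transformation of the dual graph under the edge flip $ab \mapsto cd$ is completely described by Remark~\ref{remark:newtree}: two nodes $\thickbar{\alpha}, \thickbar{\beta}$ are removed from $\Lambda(\SB)$, two new nodes $\thickbar{\gamma}, \thickbar{\delta}$ are added, and every other tetrahedron of $\SB$ persists as a tetrahedron of $\TB$.

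First, I would observe that none of $\thickbar{\alpha}, \thickbar{\beta}, \thickbar{\gamma}, \thickbar{\delta}$ can be a degree four node. Indeed, $\thickbar{\alpha} = abce$ contains the boundary triangle $abc \in F(S) = F(\partial \SB)$, so at most three of its four triangles are interior and thus $\deg_{\Lambda(\SB)}(\thickbar{\alpha}) \leq 3$. The same argument applied to $abd \in F(S)$, $acd \in F(T)$ and $bcd \in F(T)$ gives $\thickbar{\beta} \notin V_S$ and $\thickbar{\gamma}, \thickbar{\delta} \notin V_T$.

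Next, I would combine this with Corollary~\ref{coro:cor1} to show $V_S = V_T$. For any $\sigma \in V_S$, parts (i) and (ii) of Corollary~\ref{coro:cor1} give $\sigma \in \TB$ with degree four in $\Lambda(\TB)$, so $V_S \subseteq V_T$. Since the edge flip $ab \mapsto cd$ is reversible (it is the flip $cd \mapsto ab$ applied to the stacked $2$-sphere $T$), the same reasoning applied to $T$ yields $V_T \subseteq V_S$. Combined with step one, this shows that $V_S$ and $V_T$ are the same subset of $\{\sigma \in \SB \cap \TB : \sigma \text{ is a tetrahedron}\}$.

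Finally, I would note that two tetrahedra $\sigma_1, \sigma_2 \in V_S = V_T$ are adjacent in $\Lambda(\SB)$ (respectively $\Lambda(\TB)$) if and only if they share a $2$-face as abstract simplices; this condition depends only on the pair of tetrahedra, not on the ambient stacked $3$-ball. Hence the induced subgraphs $\Lambda(\SB)[V_S]$ and $\Lambda(\TB)[V_T]$ coincide, giving the desired isomorphism. The only nontrivial ingredient is the degree-four preservation, which has already been done in Corollary~\ref{coro:cor1}; once one observes that the four ``new'' or ``removed'' tetrahedra are automatically not of degree four due to the presence of a boundary triangle, the statement follows immediately.
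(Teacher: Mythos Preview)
Your proposal is correct and follows essentially the same route as the paper: establish $V_S = V_T$ via Corollary~\ref{coro:cor1} (and the reversibility of the flip), then observe that adjacency between two tetrahedra in the dual graph is the intrinsic condition of sharing a $2$-face, independent of the ambient ball. Your explicit remark that $\thickbar{\alpha}, \thickbar{\beta}, \thickbar{\gamma}, \thickbar{\delta}$ each contain a boundary triangle and hence have degree at most three is a nice clarification, though it is already implicit in the proof of Corollary~\ref{coro:cor1}; similarly, the paper compresses the symmetry argument $V_T \subseteq V_S$ into the single line ``By Corollary~\ref{coro:cor1}, $V_S = V_T$'', which you spell out.
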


\begin{proof}
By Corollary~\ref{coro:cor1}, $V_S=V_T$. For $\sigma_1, \sigma_2\in V_S=V_T$, $\sigma_1$ and $\sigma_2$ are adjacent in $\Lambda(\SB)[V_S]$ if and only if $\sigma_1\cap\sigma_2$ is an interior triangle of $\SB$ if and only if $\sigma_1\cap\sigma_2$ contains three vertices if and only if $\sigma_1\cap\sigma_2$ is an interior triangle of $\TB$ if and only if $\sigma_1$ and $\sigma_2$ are adjacent in $\Lambda(\TB)[V_T]$. The corollary follows from this observation.
\end{proof}

\begin{kor} \label{coro:cor3}
  The Pachner graph $\mathcal{S}_n$ of $n$-vertex stacked $2$-spheres  is disconnected for $n \geq 8$.
\end{kor}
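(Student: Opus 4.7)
The plan is to use Corollary~\ref{coro:cor2} as the source of an invariant: for any single edge-flip step within $\mathcal{S}_n$, the induced subgraph of the dual tree on the set of degree-four nodes is preserved up to isomorphism, so by transitivity this combinatorial invariant is constant on each connected component of $\mathcal{S}_n$. To prove disconnectedness for $n \geq 8$ it therefore suffices to exhibit two $n$-vertex stacked $2$-spheres whose dual trees yield non-isomorphic induced subgraphs on their degree-four nodes.

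First I would construct a ``path'' stacked $3$-ball $B_1$ by starting from a single tetrahedron and, at each step, gluing a new tetrahedron onto a boundary triangle of the most recently added one, terminating after $n-3$ tetrahedra. The dual tree $\Lambda(B_1)$ is then a path on $n-3$ nodes, so every node has degree at most two, and the induced subgraph on degree-four nodes is empty. Setting $S_1 = \partial B_1$ gives a first $n$-vertex stacked $2$-sphere.

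Next I would construct a ``star-extended'' stacked $3$-ball $B_2$. Begin with a tetrahedron $\sigma = abcd$ and stack a new tetrahedron on each of its four triangular faces, making all four faces of $\sigma$ interior and producing a ball on $8$ vertices whose dual tree is the star $K_{1,4}$. If $n > 8$, append the remaining $n-8$ stackings as a path hanging off any one of the four leaves. In the resulting dual tree, $\sigma$ is a node of degree four (its four original neighbours are still present, since subsequent stackings happen strictly below one leaf), so the induced subgraph on degree-four nodes contains at least the node $\sigma$. Hence $S_2 = \partial B_2$ is an $n$-vertex stacked $2$-sphere whose dual-tree invariant differs from that of $S_1$, and they lie in distinct components of $\mathcal{S}_n$.

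The only technicality is verifying that each construction is genuinely a stacked $3$-ball with the required vertex count. Since each stacking step is performed on a triangle currently lying in the boundary and introduces exactly one new vertex, $n-3$ stackings yield an $n$-vertex stacked $3$-ball, so both $B_1$ and $B_2$ are well-defined. I do not anticipate any substantive obstacle beyond this bookkeeping, as Corollary~\ref{coro:cor2} supplies the key invariant effectively for free.
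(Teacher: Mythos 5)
Your proposal is correct and follows essentially the same strategy as the paper: both invoke Corollary~\ref{coro:cor2} as a connected-component invariant and then exhibit one stacked $3$-ball whose dual tree is a path (empty induced subgraph on degree-four nodes) and one whose dual tree contains a degree-four node, concluding their boundaries lie in different components. The only slip is a minor off-by-one in your final bookkeeping sentence: to obtain an $n$-vertex stacked $3$-ball you perform $n-4$ stackings, yielding $n-3$ tetrahedra, not ``$n-3$ stackings.''
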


\begin{proof}
The stacked $3$-ball associated to an $n$-vertex stacked $2$-sphere, $n\geq 8$, has a dual graph with $m = n-3 \geq 5$ nodes, and every $m$-node tree (with degrees of nodes $\leq 4$) is the dual graph of at least one stacked $3$-ball. Hence there exist a stacked $3$-ball $B_1$ with dual graph having one node of degree four and $m-1$ nodes of degree at most three, and there exist a stacked $3$-ball $B_2$ with dual graph with all $m$ nodes of degree at most two. Then, by Corollary~\ref{coro:cor2}, the $n$-vertex stacked $2$-spheres $\partial B_1$ and $\partial B_2$ are in different connected components of $\mathcal{S}_n$.
\end{proof}

\begin{kor} \label{coro:cor4}
For $m\in \mathbb{Z}^{+}$, let $t(m)$ be the number of non-isomorphic $m$-node trees with degrees of nodes at most four. Moreover, let $n=3m+5$. Then the Pachner graph  $\mathcal{S}_n$ of $n$-vertex stacked $2$-spheres has $t(m)$ components each containing a single stacked $2$-sphere.
\end{kor}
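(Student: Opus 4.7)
The plan is to exhibit, for each isomorphism class of $m$-node trees $\mathcal{T}$ of maximum degree at most $4$, an $n$-vertex stacked $2$-sphere $S_\mathcal{T}$ that is an isolated node of $\mathcal{S}_n$, and to show that these $t(m)$ spheres are pairwise non-isomorphic. The key construction ``pads'' $\mathcal{T}$ into a tree whose non-leaves all have degree exactly $4$, and then realises it as the dual graph of a stacked $3$-ball.

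Given such a $\mathcal{T}$, I would form $\mathcal{T}'$ by attaching $4 - \deg_\mathcal{T}(v)$ new leaves to every $v \in V(\mathcal{T})$. A direct count gives
\[ |V(\mathcal{T}')| = m + \sum_{v \in V(\mathcal{T})}\bigl(4 - \deg_\mathcal{T}(v)\bigr) = m + 4m - 2(m-1) = 3m+2, \]
and by construction every node of $\mathcal{T}'$ has degree $4$ (the original $m$ nodes) or $1$ (the added leaves). Since every tree with maximum degree at most $4$ is realisable as the dual graph of some stacked $3$-ball (as used in the proof of Corollary~\ref{coro:cor3}; a routine induction, stacking one tetrahedron at a time onto a free boundary triangle), there is a stacked $3$-ball $B_\mathcal{T}$ with $\Lambda(B_\mathcal{T}) \cong \mathcal{T}'$. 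Setting $S_\mathcal{T} := \partial B_\mathcal{T}$, Lemma~\ref{prop:bd-lbt} identifies $B_\mathcal{T}$ with $\SB_\mathcal{T}$; and since a stacked $3$-ball with $k$ tetrahedra has $k+3$ vertices, $S_\mathcal{T}$ has $(3m+2)+3 = n$ vertices.

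I would then show $S_\mathcal{T}$ is isolated in $\mathcal{S}_n$. Suppose some edge flip $ab \mapsto cd$ of $S_\mathcal{T}$ produces a stacked $2$-sphere. By Theorem~\ref{theo:bsm-s2s}, the tetrahedra $\thickbar{\alpha}, \thickbar{\beta} \in \SB_\mathcal{T}$ containing $\alpha = abc$ and $\beta = abd$ must be adjacent in $\Lambda(\SB_\mathcal{T}) = \mathcal{T}'$. However, each of $\thickbar{\alpha}, \thickbar{\beta}$ has a $2$-face on $\partial \SB_\mathcal{T}$ (namely $\alpha$, respectively $\beta$), so at most three of its four $2$-faces are interior; its degree in $\Lambda(\SB_\mathcal{T})$ is thus at most $3$. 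Because every node of $\mathcal{T}'$ has degree $1$ or $4$, both $\thickbar{\alpha}$ and $\thickbar{\beta}$ must be leaves of $\mathcal{T}'$. But $m \geq 1$ makes $\mathcal{T}'$ a tree on at least $5$ nodes, and in any tree on $\geq 3$ nodes no two leaves are adjacent — contradiction.

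Finally, the assignment $\mathcal{T} \mapsto S_\mathcal{T}$ is injective on isomorphism classes: if $S_{\mathcal{T}_1} \cong S_{\mathcal{T}_2}$ then, by Lemma~\ref{prop:bd-lbt}, $\SB_{\mathcal{T}_1} \cong \SB_{\mathcal{T}_2}$, hence $\mathcal{T}'_1 \cong \mathcal{T}'_2$; deleting the degree-$1$ nodes of $\mathcal{T}'_i$ recovers $\mathcal{T}_i$. This produces $t(m)$ pairwise non-isomorphic, isolated stacked $2$-spheres in $\mathcal{S}_n$. There is no real obstacle here beyond correctly setting up the padding $\mathcal{T} \mapsto \mathcal{T}'$; the essential observation is simply that a boundary triangle caps the dual-graph degree of its containing tetrahedron at $3$, so the tetrahedra $\thickbar{\alpha}, \thickbar{\beta}$ required by Theorem~\ref{theo:bsm-s2s} have nowhere adjacent to sit in a tree whose non-leaves all have degree exactly $4$.
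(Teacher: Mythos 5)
Your proposal is correct and follows essentially the same route as the paper's proof: the same padding construction attaching $4-\deg(v)$ leaves to each node, the same count giving $3m+2$ tetrahedra and $n=3m+5$ vertices, the same observation that $\thickbar{\alpha},\thickbar{\beta}$ contain a boundary triangle and so have dual-graph degree at most $3$, hence must be leaves and cannot be adjacent, and the same use of Lemma~\ref{prop:bd-lbt} for injectivity. The only cosmetic difference is that you invoke the general fact that two leaves of a tree on $\geq 3$ nodes cannot be adjacent, whereas the paper notes that in the constructed tree every leaf is adjacent to a degree-$4$ node; both yield the same contradiction.
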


\begin{proof}
Let $H$ be an $m$-node tree in which degrees of all the nodes are at most four. Consider a new graph $G$ by connecting each node of $H$ of degree $i$ to $(4-i)$ new nodes. Then $G$ is a connected acyclic graph and hence a tree. By construction, the number of new nodes in $G$ equals the number of new arcs in $G$ which is $\sum_{v\in V(H)}(4-\deg_H(v)) = 4m - \sum_{v\in V(H)}\deg_H(v)= 4m - 2(m-1)= 2m+2$. Therefore, $G$ has $(m-1)+(2m+2) = 3m+1$ arcs, and thus $3m+2$ nodes. It follows that $G$ has $m$ nodes of degree four and $2m+2$ nodes of degree one, and each degree one node of $G$ is adjacent to a degree four node.

Let $B$ be a stacked $3$-ball whose dual graph $\Lambda(B)$ is $G$. It follows from the definition that we can always construct such a stacked $3$-ball. Let $S= \partial B$. Since $S$ is stacked it must have $3m +5$ vertices. Let $\alpha=abc$, $\beta=abd$ be two triangles of $S$, and let $\thickbar{\alpha}$ (resp., $\thickbar{\beta}$) be the unique tetrahedron of $B$ containing $\alpha$ (resp., $\beta$). Then $\deg_{\Lambda(B)}(\thickbar{\alpha}),  \deg_{\Lambda(B)}(\thickbar{\beta})< 4$ and hence $\deg_{\Lambda(B)}(\thickbar{\alpha}) = 1 = \deg_{\Lambda(B)}(\thickbar{\beta})$. If $\thickbar{\alpha} =\thickbar{\beta}$, then $cd$ is an edge and hence we cannot perform the edge flip $ab\mapsto cd$.
If $\thickbar{\alpha} \neq\thickbar{\beta}$, then $\thickbar{\alpha}$ and $\thickbar{\beta}$ are not adjacent in $\Lambda(B)$ (degree one nodes are only adjacent to degree four nodes in $\Lambda(B)$) and hence, by Theorem~\ref{theo:bsm-s2s}, the $2$-sphere $T$ obtained from $S$ by the edge flip $ab\mapsto cd$ is not stacked. Thus $S$ is isolated in $\mathcal{S}_n$.

If $H_1$ and $H_2$ are non-isomorphic trees on $m$ nodes, then the above construction carried out for both $H_1$ and $H_2$ leads to two non-isomorphic trees $G_1$ and $G_2$, leading to two non-isomorphic stacked $3$-balls $B_1$ and $B_2$ with, by Lemma~\ref{prop:bd-lbt}, non-isomorphic boundaries $S_1$ and $S_2$. Since there exist at least $t(m)$ non-isomorphic $m$-node trees with degree of nodes at most four, we have at least $t(m)$ singleton components in $\mathcal{S}_n$.
\end{proof}

\begin{kor} \label{coro:cor5}
The number of connected components in $\mathcal{S}_n$ is bounded from below by $C^n$, for some real number $C>1$.
\end{kor}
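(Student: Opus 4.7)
The proof follows by combining Corollary~\ref{coro:cor4} with an exponential lower bound on $t(m)$. Corollary~\ref{coro:cor4} states that for $n = 3m+5$, the Pachner graph $\mathcal{S}_n$ has at least $t(m)$ connected components. Since $m$ is linear in $n$ along this progression, any bound $t(m) \ge D^m$ with $D > 1$ immediately gives a bound of the form $C^n$ (for any $C < D^{1/3}$ and $n$ sufficiently large, absorbing additive constants in $n \leftrightarrow m$). The two remaining residue classes of $n$ modulo $3$ can be handled by a minor modification of the dual-tree construction used in the proof of Corollary~\ref{coro:cor4}: attach one or two extra leaf-tetrahedra to a suitable degree-four node so that the resulting stacked $2$-sphere still has its boundary tetrahedra pairwise non-adjacent in the dual tree, preserving isolation in $\mathcal{S}_n$.

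To establish $t(m) \ge D^m$, I would exhibit an explicit family of pairwise non-isomorphic caterpillars. Set $k := \lceil 2m/3 \rceil$ and, for every subset $I \subseteq \{2,\ldots,k-1\}$ of size $m-k$, let $T_I$ be the tree obtained from the path $v_1\mbox{-}v_2\mbox{-}\cdots\mbox{-}v_k$ by attaching a single pendant leaf to each $v_i$ with $i \in I$. Every $T_I$ has exactly $m$ vertices, and every vertex has degree at most three (interior path vertices have degree $2$ or $3$, all others are leaves), so $T_I$ is admissible for $t(m)$.

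A caterpillar is determined up to isomorphism by the sequence of leaf-counts along its spine, taken up to reversal. In our construction the spine of $T_I$ is $v_2\mbox{-}\cdots\mbox{-}v_{k-1}$ and the leaf-count sequence recovers $I$; a direct check gives $T_I \cong T_{I'}$ if and only if $I = I'$ or $I' = \{k+1-i : i \in I\}$, and hence
\[
t(m) \;\ge\; \tfrac{1}{2}\binom{k-2}{m-k}.
\]
Since $(m-k)/(k-2) \to 1/2$, the standard central-binomial estimate $\binom{N}{\lfloor N/2\rfloor} \ge 2^N/(N+1)$ yields $\binom{k-2}{m-k} \ge 2^{2m/3}/O(m)$. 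Therefore $t(m) \ge D^m$ for any fixed $D \in (1, 2^{2/3})$ and all sufficiently large $m$, which completes the argument.

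The only mild obstacles are the isomorphism analysis for caterpillars (standard, but needing a brief verification) and the bookkeeping for $n \not\equiv 2 \pmod 3$; everything else is elementary counting and does not require any new structural insight beyond Corollary~\ref{coro:cor4}.
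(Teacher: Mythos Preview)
Your explicit caterpillar count showing $t(m)\ge D^m$ is correct and in fact more detailed than the paper, which simply asserts that $t(m)$ is exponential in $m$ without justification.

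The genuine gap is your treatment of the residues $n\not\equiv 2\pmod 3$. You propose to ``attach one or two extra leaf-tetrahedra to a suitable degree-four node'' so as to keep the boundary tetrahedra pairwise non-adjacent and thus preserve isolation. This cannot work: a tetrahedron has only four $2$-faces, so a node of degree four in $\Lambda(\SB)$ already has every face interior and admits no further neighbour. If instead you attach the extra leaf to an existing leaf, that leaf becomes a degree-two node with two boundary faces, and it is adjacent to the new leaf; an edge flip across this adjacency is valid and, by Theorem~\ref{theo:bsm-s2s}, produces another stacked sphere. So isolation is lost either way, and the argument via Corollary~\ref{coro:cor4} alone does not extend.

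The paper handles this differently and more simply. It does not try to preserve isolation. Setting $m=\lfloor (n-5)/3\rfloor$, it takes the tree $G$ from the proof of Corollary~\ref{coro:cor4} and attaches the $n-3m-5\le 2$ extra nodes to some degree-one nodes of $G$, obtaining a tree $G'$ on $n-3$ nodes with the \emph{same} set of degree-four nodes as $G$. For the resulting stacked sphere $S$ one has $\Lambda(\SB)[V_S]=G'[V_S]=H$, so by Corollary~\ref{coro:cor2} (the induced subgraph on degree-four nodes is invariant under edge flips within $\mathcal{S}_n$) non-isomorphic choices of $H$ yield spheres in different components, regardless of whether they are isolated. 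This gives at least $t(m)$ components for every $n$, and your caterpillar bound then finishes the proof. Replacing your isolation argument for the extra residues by this appeal to Corollary~\ref{coro:cor2} closes the gap.
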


\begin{proof}
Let $m=\lfloor\frac{n-5}{3}\rfloor$. Let $t(m)$ be the number of non-isomorphic $m$-node trees with degree of nodes at most four as in Corollary~\ref{coro:cor4}.

\medskip

\noindent {\bf Claim}: The number of components in $\mathcal{S}_n$ is at least $t(m)$.

\smallskip

Let $\mathcal{T}$ be the set of all $m$-node trees with node-degrees at most four.
For each $H\in \mathcal{T}$, we can construct a $(3m+5)$-nodes tree $G$ whose degree four nodes are the nodes of $H$ and all others are of degree 1. By adding $n-3m-5$ ($\leq 2$) new nodes to the $n-3m-2$ degree one nodes of $G$ we obtain a new tree $G^{\prime}$ having the same set of degree four nodes as in $G$. Let $B$ be a stacked $3$-ball whose dual graph is $G^{\prime}$ and let $S=\partial B$. By construction, $S$ is a stacked $2$-sphere with exactly $n$ vertices. Let $V_S$ be as in Corollary~\ref{coro:cor2}. Then $G^{\prime}[V_S]=G[V_S]=H$. Therefore, by Corollary~\ref{coro:cor2}, the $n$-vertex stacked $2$-spheres obtained in this process corresponding to different graphs in $\mathcal{T}$ are in different components of $\mathcal{S}_n$. This proves the claim.

\smallskip

Since $t(m)$ is exponential in $m$, the result follows from the claim.
\end{proof}

Following arguments along the lines of Corollary~\ref{coro:cor1} we can observe that, apart from a large number of isolated singleton components in $\mathcal{S}_n$, there are also larger connected components corresponding to dual graphs with no, or very few nodes of degree four. For instance, the largest connected component in $\mathcal{S}_n$, $n \leq 14$, shown in Table~\ref{fig:stackedPachner}, corresponds to boundaries $S$ of stacked balls $\SB$ with dual graphs without nodes of degree four (i.e., $V_S = \emptyset$). Let $\mathcal{S}_n^0$ denote the Pachner graph consisting of this class of stacked $2$-spheres. We have the following result.

\begin{theorem}
  \label{thm:stackedcomp}
  The Pachner graph $\mathcal{S}_n^0$ is connected.
\end{theorem}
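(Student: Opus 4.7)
The plan is to show every $S \in \mathcal{S}_n^0$ can be brought via edge flips in $\mathcal{S}_n^0$ to a canonical stacked $2$-sphere whose dual tree is a path, and then to connect all such path-shaped elements. By Corollary~\ref{coro:cor2}, any edge flip from $S \in \mathcal{S}_n^0$ that yields a stacked $2$-sphere automatically stays in $\mathcal{S}_n^0$ (the set of degree-four dual nodes is preserved and starts out empty), so the task reduces to exhibiting suitable sequences of valid flips. The main tool is a \emph{leaf absorption} move: if $\Lambda(\SB)$ has a degree-$3$ node $\bar{v} = abcd$ adjacent to a leaf $\bar{\ell} = abcx$ (shared triangle $abc$), and, up to relabelling, $bcd$ is the unique boundary triangle of $\bar{v}$, then $bcd$ and $bcx$ are boundary triangles of $S$ sharing the edge $bc$ and $dx \notin E(S)$ (since $x$ lies only in $\bar{\ell}$); by Theorem~\ref{theo:bsm-s2s} the flip $bc \mapsto dx$ stays in $\mathcal{S}_n^0$, and by Remark~\ref{remark:newtree} it replaces $\bar{v}$ and $\bar{\ell}$ in $\Lambda(\TB)$ with two degree-$2$ nodes $abdx$ and $acdx$, while the remaining two neighbours of $\bar{v}$ (across faces $abd$ and $acd$) reattach to them with unchanged degrees. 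Hence the total number of degree-$3$ nodes strictly decreases.

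When no degree-$3$ node of $\Lambda(\SB)$ has a leaf neighbour, every leaf $\bar{\ell}$ is adjacent only to a degree-$2$ node $\bar{n}$, and I would first perform a short sequence of \emph{gluing-reshuffling} flips at such edges $\bar{n} - \bar{\ell}$. A direct application of Theorem~\ref{theo:bsm-s2s} and Remark~\ref{remark:newtree} shows that each such flip preserves the abstract shape of $\Lambda(\SB)$ but modifies the local gluing at the leaf; by chaining two or three of them one can reach any desired alignment between the boundary triangles of $\bar{n}$ and its neighbour $\bar{v}$ on the path toward a degree-$3$ node. After a bounded number of such reshufflings along a shortest path from $\bar{v}$ to its nearest leaf, a boundary triangle of some neighbour of $\bar{v}$ on this path shares an edge with the unique boundary triangle of $\bar{v}$, and the corresponding flip is then genuinely shape-changing: it produces a new degree-$3$ node adjacent to a leaf, to which the leaf absorption move applies. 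Iterating this procedure reduces $\Lambda(\SB)$ to a path, and the analogous reshuffling flips at the two ends of that path then connect all path-shaped elements of $\mathcal{S}_n^0$ to one canonical form.

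The main obstacle is the preparatory step: showing that gluing-reshuffling flips always have enough freedom to expose a shape-changing move, even in the worst case where every degree-$3$ node is far from every leaf and the intermediate gluings are adversarial. This calls for an induction on the distance from a degree-$3$ node to its nearest leaf, together with a case analysis on the possible gluing types at the intermediate degree-$2$ nodes and on the three choices of flipped edge within each shared triangle provided by Theorem~\ref{theo:bsm-s2s}.
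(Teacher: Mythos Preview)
Your overall strategy is exactly the paper's: reduce the number of degree-$3$ nodes in $\Lambda(\SB)$ until it is a path, then normalise all path-shaped dual trees to a single canonical sphere. Your leaf-absorption move is precisely the paper's Claim~2 in Lemma~\ref{lemma:49a}, with the same flip and the same effect on $\Lambda$.

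The genuine gap is in the preparatory step, and it is more than a missing detail. Two points in your sketch do not hold as stated. First, reshuffling at the leaf edge $\bar n-\bar\ell$ does preserve the abstract tree, but it leaves the face $\bar n\cap\bar m$ (and hence everything from $\bar m$ onward toward the degree-$3$ node) untouched; so no amount of reshuffling at the leaf end changes the link of the critical edge at the degree-$3$ node. Second, a single shape-changing flip at the degree-$3$ node $\gamma$ does \emph{not} produce a degree-$3$ node adjacent to a leaf; by Remark~\ref{remark:newtree} it only moves the branch point one step along the path, i.e.\ it reduces $\ell(S)$ by exactly $1$. Your induction on $\ell(S)$ is therefore the right framework, but each inductive step only gains one unit of distance, and the reshuffling that enables it must happen elsewhere.

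The paper's fix is concrete and worth internalising. With $\gamma=1234$ the degree-$3$ node and $234$ its boundary face, the relevant edge is $23$, and the obstruction to the distance-reducing flip $23\mapsto 4x_1$ is that $\operatorname{lk}_{\SB}(23)=4\mbox{-}1\mbox{-}x_1\mbox{-}\cdots\mbox{-}x_k$ may have length $>2$. The reshuffling is done not at the leaf but at the far end of this chain: the flips $2x_{k-1}\mapsto x_kx_{k-2}$, $2x_{k-2}\mapsto x_kx_{k-3}$, \dots, $2x_1\mapsto x_k1$ each keep $\Lambda$ isomorphic while shortening $\operatorname{lk}(23)$ by one; after $k-1$ of them one is in the base case and the flip $23\mapsto 4x_k$ reduces $\ell(S)$ by one. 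The same device (propagating a common edge along a chain by successive flips) also handles your final step and is the content of Lemma~\ref{lemma:49b}.
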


We split the proof of Theorem~\ref{thm:stackedcomp} into two lemmas.

\begin{lemma}
  \label{lemma:49a}
  Each stacked $2$-sphere $S \in \mathcal{S}_n^0$ is connected to a stacked $2$-sphere $T$ in the Pachner graph $\mathcal{S}_n^0$, where the dual graph $\Lambda(\TB)$ of $\TB$ is a path.
\end{lemma}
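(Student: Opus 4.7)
My plan is induction on the number $D_3$ of degree-$3$ nodes of the dual tree $\Lambda(\SB)$. Since $S \in \mathcal{S}_n^0$ forces every node of $\Lambda(\SB)$ to have degree at most $3$, the base case $D_3 = 0$ says $\Lambda(\SB)$ is already a path, and we set $T := S$. For the inductive step I will produce a sequence of flips in $\mathcal{S}_n^0$ from $S$ to some $S'$ with $D_3(\Lambda(\SB')) < D_3(\Lambda(\SB))$, after which the inductive hypothesis applies.

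The preliminary ingredient I will need is a clean validity criterion for a flip along a tree edge $uv$ of $\Lambda(\SB)$, obtained from Theorem~\ref{theo:bsm-s2s} together with the characterisations in the following remark: the flip stays in $\mathcal{S}_n^0$ if and only if the shared triangle $u \cap v$ contains a vertex $e$ that is \emph{free} in both $u$ and $v$, where $e$ is called free in a tetrahedron $w$ when the face of $w$ opposite to $e$ is a boundary triangle of $S$. A tetrahedron of degree $d \in \{1,2,3\}$ in $\Lambda(\SB)$ has exactly $4-d$ free vertices, all lying in the shared triangle with any fixed neighbour, so any tree edge $uv$ with $\deg u + \deg v \leq 4$ (in particular any edge incident to a leaf) is automatically flippable. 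By Remark~\ref{remark:newtree}, the flip replaces $u, v$ by two new adjacent tetrahedra $\thickbar{\gamma}, \thickbar{\delta}$, with the other neighbours of $u, v$ redistributed between $\thickbar{\gamma}, \thickbar{\delta}$ according to which of the flipped endpoints $a, b$ their incident face contains.

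The inductive step splits into two cases. In \emph{Case A}, some degree-$3$ node $v$ is adjacent to a leaf $u$. Then all three vertices of $u \cap v$ are free in $u$, and $v$'s unique free vertex lies in $u \cap v$ (it is opposite $v$'s unique boundary face, which differs from all three faces of $v$ shared with its neighbours), so the flip along $uv$ is valid. By Remark~\ref{remark:newtree}, $v$'s two other branches are attached via faces of $v$ opposite to $a$ and $b$ respectively, so they split one onto each of $\thickbar{\gamma}, \thickbar{\delta}$; both new tetrahedra therefore end up with degree $2$, and $D_3$ drops by exactly one.

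In \emph{Case B}, no degree-$3$ node is adjacent to a leaf. I pick a degree-$3$ node $v$ at minimum distance $d \geq 2$ from some leaf $\ell$, along the path $\ell = w_0 \mbox{-} w_1 \mbox{-} \cdots \mbox{-} w_{d-1} \mbox{-} v$, and argue by secondary induction on $d$. The potential flip along $v w_{d-1}$ is valid iff $v$'s unique free vertex $v_b$ is also free in $w_{d-1}$, i.e., iff $w_{d-1}$'s face opposite to $v_b$ is boundary; if so, performing it creates a new degree-$3$ node adjacent to $w_{d-2}$ and strictly reduces $d$. Otherwise the obstruction is that $w_{d-1}$'s face opposite to $v_b$ is its shared face with $w_{d-2}$, and I first perform the always-valid flip along $w_{d-1} w_{d-2}$; by computing the resulting vertex sets I show that the new tetrahedron $\thickbar{\gamma}$ adjacent to $v$ has its face opposite to $v_b$ inherited from $w_{d-2}$'s face opposite to $w_{d-1}$'s extra vertex, so that the follow-up flip along $v \thickbar{\gamma}$ becomes valid provided this inherited face is boundary in $S$. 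The main obstacle I foresee is the recursive sub-case where this inherited face is itself shared with $w_{d-3}$, producing a ``chain of badness'' along the path toward $\ell$; I plan to resolve this by a cascade of preparatory flips $w_{i+1} w_i$ performed in increasing order of $i$, each one unblocking the next step of the reduction, and noting that the chain must terminate at $\ell$ since at the leaf the obstruction cannot recur.
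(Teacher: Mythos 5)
Your plan follows the paper's own strategy closely: induct on the number of degree-three nodes in the dual tree $\Lambda(\SB)$, handle a leaf adjacent to a degree-three node by one flip (your Case~A; the paper's Claim~2 in Lemma~\ref{lemma:49a}), and otherwise reduce the distance from a degree-three node to the nearest leaf (your Case~B; the paper's Claim~1). Your ``free vertex'' reformulation of Theorem~\ref{theo:bsm-s2s} is a clean abstraction, and your observation that $\deg u + \deg v \le 4$ in $\Lambda(\SB)$ guarantees a valid flip along $uv$ is correct and is what the paper uses implicitly. Case~A checks out fully: the unique free vertex of the degree-three node lies in the shared triangle, the flip is valid, and by Remark~\ref{remark:newtree} the two other branches redistribute one to each of $\thickbar{\gamma}, \thickbar{\delta}$, dropping $D_3$ by one.

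The gap is in Case~B, which is where the technical content of the lemma lives. You correctly note that the direct flip $v w_{d-1}$ can be blocked, and you correctly note that the blockage can propagate along the path toward $\ell$ (in the paper's notation, this is exactly the chain of tetrahedra $\gamma_0, \dots, \gamma_k$ sharing the fixed edge $23$). But you then say you ``plan to resolve this by a cascade'' without verifying the key claims. Two things need to be nailed down and are not. First, if you naively perform the ``always-valid'' flip along $w_{d-1} w_{d-2}$ while the inherited face is still shared with $w_{d-3}$, the new tetrahedron $\thickbar{\gamma}$ adjacent to $v$ acquires degree \emph{three} (it inherits $v$, $\thickbar{\delta}$, and $w_{d-3}$ as neighbours), so $D_3$ goes \emph{up} and the follow-up flip $v\thickbar{\gamma}$ is still blocked; the cascade must therefore begin at the \emph{far} end of the chain, not at $w_{d-1}$. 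Second, and more importantly, one must verify that each flip in the cascade keeps the dual tree isomorphic to what it was, so that no new degree-three nodes appear and the next flip in the cascade is again valid; the paper does this explicitly in Case~2 of Claim~1 (the sequence $2x_{k-1}\mapsto x_kx_{k-2}$, $\dots$, $2x_1\mapsto x_k 1$, each preserving the tree up to isomorphism by inspecting the new tetrahedra). Your sketch identifies the right structure, but ``each one unblocking the next step'' is exactly the statement that needs a proof, and you have not supplied one.
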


\begin{proof}
The idea of the proof is to show that, for every $S\in\mathcal{S}_n^0$ with $\Lambda(\SB)$ not a path, $S$ is connected in $\mathcal{S}_n^0$ to a stacked $2$-sphere $T\in\mathcal{S}_n^0$ with the number of nodes of degree three in $\Lambda(\TB)$ less than that in $\Lambda(\SB)$.

For $S\in \mathcal{S}_n^0$ and $\alpha$, $\beta$ nodes in $\Lambda(\SB)$, let $d_S(\alpha, \beta)$ be the length of the unique path from $\alpha$ to $\beta$ in the tree $\Lambda(\SB)$. Moreover, if $S$ has a degree three node in $\Lambda(\SB)$, let $\ell(S) = \min\{d_S(\alpha, \beta) \, \mid \, \alpha$ leaf, $\beta$ degree three in $\Lambda(\SB)\}$.

\medskip

\noindent {\bf Claim 1}: Let $S\in \mathcal{S}_n^0$ be a stacked $2$-sphere such that $\Lambda(\SB)$ is not a path. If $\ell(S)\geq 2$ then there exists a stacked $2$-sphere $T\in \mathcal{S}_n^0$ such that (i) $S$ is connected to $T$ in $\mathcal{S}_n^0$, (ii) the number of degree three nodes in $\Lambda(\TB)$ is the same as in $\Lambda(\SB)$ and (iii) $\ell(T) = \ell(S)-1$.

\smallskip

Let $\ell= \ell(S)= d_S(\gamma,\delta)$, where $\gamma$ is a degree three node and $\delta$ is a leaf in $\Lambda(\SB)$. Let $\gamma_0\mbox{-}\gamma_1\mbox{-} \cdots \mbox{-}\gamma_{\ell}$ be the path in $\Lambda(\SB)$ from $\gamma =\gamma_0$ to $\delta=\gamma_{\ell}$. Then $\deg_{\Lambda(\SB)}(\gamma_0)=3$, $\deg_{\Lambda(\SB)}(\gamma_i)=2$ for $1\leq i\leq \ell-1$, and $\deg_{\Lambda(\SB)}(\gamma_{\ell})=1$. Let the other nodes adjacent to $\gamma$ be $\alpha$ and $\beta$. Assume, without loss, that $\gamma=1234$, $\alpha=124a$, $\beta=134b$ and $\gamma_1=123x_1$. Then, the link of $23$ in $\SB$ is of the form $4\mbox{-}1\mbox{-}x_1\mbox{-}\cdots\mbox{-}x_k$ for some $k\leq\ell$.

\smallskip

\noindent  {\bf Case 1}. Let $k=1$. It follows that $23x_1$ is a triangle of $S = \partial \SB$. By Theorem~\ref{theo:bsm-s2s}, the triangulated $2$-sphere $T$ obtained from $S$ by the edge flip $23\mapsto 4x_1$ is stacked and hence, by Corollary~\ref{coro:cor1}, is in $\mathcal{S}_n^0$. By Lemma~\ref{prop:bd-lbt}, $\gamma^{\prime} :=134x_1$ and $\gamma_1^{\prime} := 124x_1$ are tetrahedra in $\TB$.

Following the transformation of the dual graph of a stacked ball under an edge flip, as shown in Figure~\ref{fig:tree}, the dual graph $\Lambda(\TB)$ is obtained from $\Lambda(\SB)$ by replacing the three edges adjacent to $\gamma$ with the path $\beta\mbox{-}\gamma^{\prime}\mbox{-}\gamma_1^{\prime}\mbox{-}\alpha$, and attaching the path $\gamma_2\mbox{-}\cdots\mbox{-}\gamma_{\ell}$ to either $\gamma^{\prime}$ or $\gamma_1^{\prime}$. In either case, the path from the new degree three node to $\gamma_{\ell}$ is of length $\ell-1$, and since the remaining part of $\Lambda(\TB)$ is equal to the remaining part of $\Lambda(\SB)$, we have $\ell(T) = \ell(S)-1$ and Claim 1 is true in this case.

\smallskip

\noindent  {\bf Case 2}. Let $k\geq 2$. In this case we can assume that $\gamma_i= 23x_{i-1}x_i$ for $2\leq i\leq k$, and that the triangles $21x_1$, $2x_1x_2, \dots, 2x_{k-2}x_{k-1}$, $31x_1$, $3x_1x_2, \dots, 3x_{k-2}x_{k-1}\in S$ (i.e., are in the boundary of $\SB$). Since $\deg_{\Lambda(\SB)}(\gamma_k)\leq 2$ ($=1$ if $k=\ell$ and $=2$ if $k<\ell$), at least two $2$-dimensional faces of $\gamma_k$ are triangles of $S$. This implies that at least one of the triangles $2x_{k-1}x_{k}$ and $3x_{k-1}x_{k}$ is a triangle of $S$.

\begin{figure}[htbp]
  \begin{center}
    \includegraphics[width=.9\textwidth]{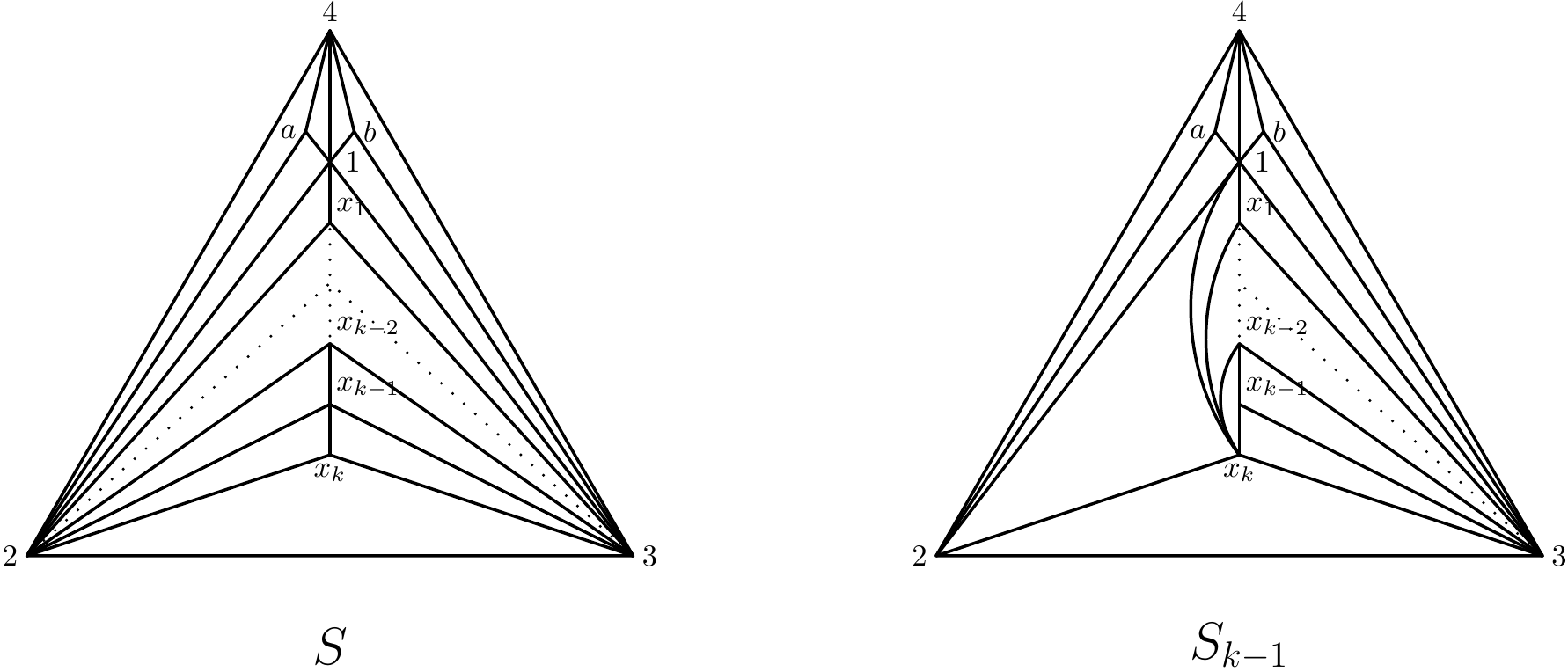}
    \caption{Sequence of edge flips as performed in the proof of Lemma~\ref{lemma:49a}, Claim 1, Case 2. \label{fig:Lem410}}
  \end{center}
\end{figure}

Assume, without loss, that $2x_{k-1}x_{k}\in S$. (In that case, $\gamma_{k+1}$ is of the form $3x_{k-1}x_{k}x_{k+1}$ for some $x_{k+1}\in V(S)$ when $k<\ell$.) 
Let $S_1$ be obtained from $S=S_0$ by the edge flip $2x_{k-1}\mapsto x_{k}x_{k-2}$. Since $\link_{\SB}(2x_{k-1})=x_{k-2}\mbox{-}3\mbox{-}x_{k}$, by Theorem~\ref{theo:bsm-s2s}, $S_1$ is stacked. Observe that the path $\gamma_{k-2}\mbox{-} \gamma_{k-1}\mbox{-}\gamma_{k} \mbox{-} \gamma_{k+1}$ in $\Lambda(\SB)$ is replaced by $\gamma_{k-2}\mbox{-} (23x_kx_{k-2}) \mbox{-} (3x_kx_{k-1}x_{k-2}) \mbox{-}\gamma_{k+1}$ in $\Lambda(\SB_1)$ when $k < \ell$,
and $\gamma_{k-2}\mbox{-} \gamma_{k-1}\mbox{-}\gamma_{k}$ is replaced by $\gamma_{k-2}\mbox{-} (23x_kx_{k-2}) \mbox{-} (3x_kx_{k-1}x_{k-2})$ when $k = \ell$.
Thus, $\Lambda(\SB_1)$ is isomorphic to $\Lambda(\SB)$.

Inductively, for $1\leq i\leq k-1$, $\link_{\SB}(2x_{k-i})= x_{k-i-1}\mbox{-} 3 \mbox{-}x_{k}$ and hence the sphere $S_{i}$ obtained from $S_{i-1}$ by the edge flip $2x_{k-i}\mapsto x_{k}x_{k-i-1}$ is stacked. Then $\Lambda(\SB_i)$ is isomorphic to $\Lambda(\SB_{i-1})$, see Figure~\ref{fig:Lem410}. (Note that $S_{k-1}$ is obtained by the sequence of edge flips $2x_{k-1}\mapsto x_{k}x_{k-2}$, $2x_{k-2}\mapsto x_{k}x_{k-3}, \dots, 2x_{2}\mapsto x_{k}x_{1}$, $2x_{1}\mapsto x_{k}1$.)

It follows that $S_{k-1}$ is stacked, $S$ can be joined to $S_{k-1}$ in $\mathcal{S}_n^0$, $\Lambda(\SB_{k-1})$ is isomorphic to $\Lambda(\SB)$, and $\link_{\SB_{k- 1}}(23) =4\mbox{-}1\mbox{-}x_{k}$. In particular, $S_{k-1}$ satisfies the hypothesis of Case 1, $\ell(S_{k-1}) = \ell(S)$ and the number of degree three nodes in $\Lambda(\SB_{k-1})$ is the same as that in $\Lambda(\SB)$. Consequently, by Case 1, $S_{k-1}$ is connected to some $T$ in $\mathcal{S}_n^0$, such that the number of degree three nodes in $\Lambda(\TB)$ is the same as that in $\Lambda(\SB_{k-1})$ (which is the same as that in $\Lambda(\SB)$) and $\ell(T) = \ell(S_{k-1})-1=\ell(S)-1$. This completes the proof of Claim 1.

\medskip

\noindent {\bf Claim 2}: For $S\in \mathcal{S}_n^0$, if $\Lambda(\SB)$ has a leaf which is adjacent to a degree three node in $\Lambda(\SB)$ (i.e., $\ell(S)=1$) then there exists $T \in \mathcal{S}_n^0$ which can be obtained from $S$ by an edge flip and the number of nodes of degree three in $\Lambda(\TB)$ is one less than that in $\Lambda(\SB)$.

\smallskip

Let $\delta=123d$ be a leaf node which is adjacent to a degree three node $\gamma= 1234$. Assume, as above, that the adjacent nodes of $\gamma$ are $\alpha=124a$ and $\beta=134b$. Then edge $23$ is in two tetrahedra and, by Theorem~\ref{theo:bsm-s2s}, the $2$-sphere $T$ obtained from $S$ by the edge flip $23\mapsto 4d$ is stacked and hence in $\mathcal{S}_n^0$ by Corollary~\ref{coro:cor1}. Moreover, by Lemma \ref{prop:bd-lbt}, $\gamma^{\prime} :=124d$ and $\delta^{\prime}:= 134d$ are in $\TB$. Again, by following the transformation shown in Figure~\ref{fig:tree}, $\Lambda(\TB)$ contains the path $\alpha\mbox{-}\gamma^{\prime} \mbox{-} \delta^{\prime}\mbox{-} \beta$ instead of the three edges adjacent to $\gamma$ in $\Lambda(\SB)$. Since the remaining parts of $\Lambda(\SB)$ and $\Lambda(\TB)$ coincide, Claim 2 follows.

\smallskip
The result follows inductively using Claims 1 and 2.
\end{proof}

\begin{lemma}
  \label{lemma:49b}
  Let $\partial \Delta_n$ be as shown in Figure~\ref{fig:deltan} and let $S \in \mathcal{S}_n^0$. If $\Lambda(\SB)$ is a path then $S$ is connected to $\partial\Delta_n$ in $\mathcal{S}_n^0$.
\end{lemma}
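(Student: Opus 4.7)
The plan is to prove the lemma via an explicit canonicalisation procedure: given any path-stacked $S\in\mathcal{S}_n^0$, construct a sequence of edge flips transforming it into $\partial\Delta_n$, and verify that every intermediate triangulation lies in $\mathcal{S}_n^0$. For $n\leq 6$ the statement is immediate since $\mathcal{S}_n$ then contains only one element.

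For the main case I would first parameterise each path-stacked $3$-ball $\SB$ by a sequence of ``exclusion vertices''. Label the tetrahedra along the path as $\sigma_0\mbox{-}\sigma_1\mbox{-}\cdots\mbox{-}\sigma_{n-4}$, let $a_i$ denote the apex of $\sigma_i$ for $i\geq 1$ (the new vertex added when $\sigma_i$ is stacked onto $\sigma_{i-1}$), and for each $i\geq 1$ let $x_i$ be the unique vertex of $\sigma_{i-1}$ not in $\sigma_i$. The path condition $\sigma_{i-2}\cap\sigma_{i-1}\neq\sigma_{i-1}\cap\sigma_i$ forces $x_i\neq a_{i-1}$ for $i\geq 2$, so $x_i$ is one of the three non-apex vertices of $\sigma_{i-1}$, and the reference ball $\Delta_n$ is characterised, up to isomorphism, by $x_i$ always being the ``oldest'' such vertex (i.e.\ the one inherited from the earliest of the preceding tetrahedra). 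I would then sequentially correct the choices from $i=2$ upwards: assuming $x_1,\ldots,x_{i-1}$ already match the canonical pattern, if $x_i$ is non-canonical I exhibit an edge $ab$ shared by exactly $\sigma_{i-1}$ and $\sigma_i$ such that the flip $ab\mapsto cd$ is stacking-preserving by Theorem~\ref{theo:bsm-s2s} and replaces $x_i$ by the canonical choice, leaving $x_1,\ldots,x_{i-1}$ untouched. By Remark~\ref{remark:newtree} this flip only alters adjacencies local to $\sigma_{i-1}$ and $\sigma_i$, and by Corollary~\ref{coro:cor2} no degree-four node can appear in the dual graph, so we remain within $\mathcal{S}_n^0$ throughout.

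The main obstacle I anticipate is that the correcting flip can locally destroy the path structure of $\Lambda(\SB)$: depending on how $\thickbar{\alpha}$ and $\thickbar{\beta}$ (the two tetrahedra adjacent across the flipped edge, in the notation of Remark~\ref{remark:newtree}) are connected to the rest of the path, the outgoing arcs may reassemble around $\thickbar{\gamma}$ and $\thickbar{\delta}$ so as to produce a degree-three node, turning the path into a $T$-shape. In such cases an additional flip must be used to straighten the dual graph back into a path before proceeding to position $i+1$. A case analysis on the local arrangement of $x_{i-1},x_i,x_{i+1}$ should establish that the required correcting sequence of one or two flips exists in every configuration, and a lexicographic monovariant on $(x_1,\ldots,x_{n-4})$ then guarantees that the procedure terminates at $\partial\Delta_n$.
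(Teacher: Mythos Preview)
Your plan is plausible but takes a different route from the paper, and in one place misidentifies the target. From the paper's proof it is clear that $\Delta_n$ is the \emph{fan}: all $n-3$ tetrahedra share a single common edge. Your ``always drop the oldest vertex'' description instead produces the sliding-window ball $\sigma_i=\{i{+}1,i{+}2,i{+}3,i{+}4\}$, in which no edge is common to more than three consecutive tetrahedra. This does not kill your argument (any two canonical path-balls would be connected once you prove connectivity of the whole path-class), but it does mean your stated endpoint is not the $\partial\Delta_n$ of the lemma.

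More substantively, the paper avoids entirely the obstacle you anticipate. Its inductive invariant is not ``$x_1,\ldots,x_{i-1}$ already canonical'' but rather ``the first $k$ tetrahedra share a common edge''. Writing $\gamma_i=abx_ix_{i+1}$ for $i\leq k$ and $\gamma_{k+1}=bx_kx_{k+1}x_{k+2}$, the paper performs the batch of flips $ax_k\mapsto x_{k+1}x_{k-1}$, $ax_{k-1}\mapsto x_{k+1}x_{k-2}$, \dots, $ax_2\mapsto x_{k+1}x_1$. Each flip is admissible by Theorem~\ref{theo:bsm-s2s} since $\link_{\SB}(ax_j)$ is a length-two path, and---crucially---one checks via Remark~\ref{remark:newtree} that each flip keeps $\Lambda(\SB)$ a path; no $T$-shape ever appears and no repair step is needed. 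After the batch the first $k{+}1$ tetrahedra share the edge $bx_{k+1}$, and induction finishes. Your local correction of a single $x_i$ would instead have to confront the genuine possibility (visible from Remark~\ref{remark:newtree}) that the two outgoing arcs reattach to the same new node, and the promised ``one or two flips'' case analysis is exactly the work that the paper's choice of invariant sidesteps.
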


\begin{figure}[!h]
  \begin{center}
    \includegraphics[height=5cm]{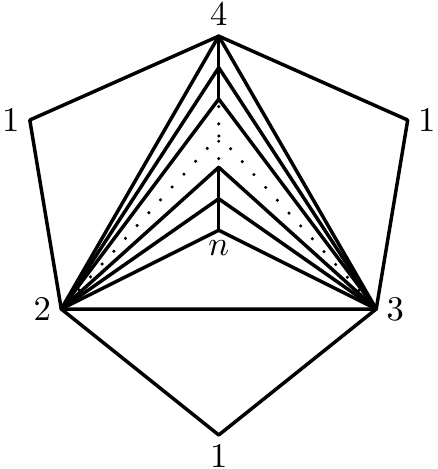}
    \caption{The canonical stacked $3$-ball $\Delta_n$. Note that this complex is also used as a canonical target in \cite{Bose11FlipGraph} to prove upper bounds on the diameter of the Pachner graph $\mathcal{P}_n$ of $n$-vertex $2$-spheres. \label{fig:deltan}}
  \end{center}
\end{figure}

\begin{proof}
Let $\Lambda(\SB) = \gamma_1\mbox{-}\gamma_2\mbox{-}\cdots \mbox{-}\gamma_{n-3}$.

\medskip

\noindent {\bf Claim}: If $\gamma_1, \dots, \gamma_k$ have a common edge and $\gamma_1, \dots, \gamma_{k+1}$ have no common edge, $k\leq n-4$, then $S$ can be joined to $T \in \mathcal{S}_n^0$, where $\Lambda(\TB)$ is a path of the form $\alpha_1\mbox{-}\alpha_2\mbox{-}\cdots \mbox{-}\alpha_{n-3}$ such that $
\alpha_1, \dots, \alpha_{k+1}$ have a common edge.

\begin{figure}[!h]
  \begin{center}
    \includegraphics[width=0.9\textwidth]{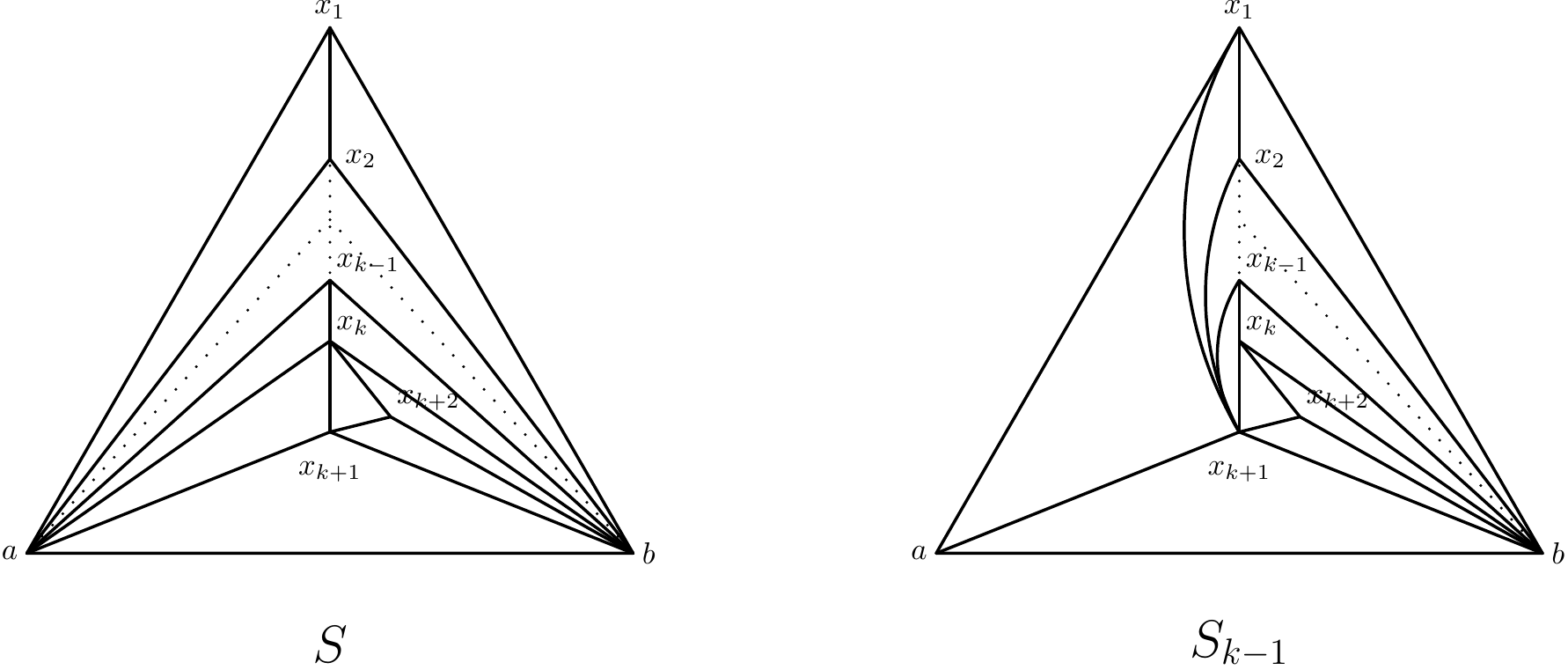}
    \caption{Sequence of edge flips as performed in the proof of Lemma~\ref{lemma:49b}. \label{fig:Lem411}}
  \end{center}
\end{figure}

Since $\gamma_1, \dots, \gamma_{k+1}$ have no common edge, we can assume that $k\geq 3$. Let $\gamma_i =abx_ix_{i+1}$ for $1\leq i\leq k$. Assume without loss of generality that $\gamma_{k+1} =bx_kx_{k+1}x_{k+2}$. Then $\link_{\SB}(ax_k)= x_{k-1}\mbox{-}b\mbox{-} x_{k+1}$. Thus, by Theorem~\ref{theo:bsm-s2s}, the $2$-sphere $S_1$ obtained from $S$ by the edge flip $ax_k\mapsto x_{k+1}x_{k-1}$ is stacked. Similarly, the $2$-sphere $S_2$ obtained from $S_1$ by the edge flip $ax_{k-1}\mapsto x_{k+1}x_{k-2}$ is stacked. Continuing this way, we obtain a stacked sphere $T=S_{k-1}$ from $S_{k-2}$ by the edge flip $ax_2\mapsto x_{k+1}x_1$, see Figure~\ref{fig:Lem411}. Hence $S$ can be joined to $T$ in $\mathcal{S}_n^0$ and $\Lambda(\TB) = \alpha_1\mbox{-}\alpha_2\mbox{-}\cdots \mbox{-}\alpha_{k+1}\mbox{-}\gamma_{k+2}\mbox{-}\cdots\mbox{-}\gamma_{n-3}$, where $\alpha_1= bx_{k+1}ax_1$, $\alpha_{i}=bx_{k+1}x_{i-1}x_{i}$, $2 \leq i \leq k$, and $\alpha_{k+1} =bx_{k+1}x_{k}x_{k+2}$. This proves the claim.

\smallskip

The lemma follows by induction using the claim.
\end{proof}

\begin{proof}[Proof of Theorem \ref{thm:stackedcomp}]
The result follows from Lemmas \ref{lemma:49a} and \ref{lemma:49b}.
\end{proof}

	{\footnotesize
	 \bibliographystyle{abbrv}
	 \bibliography{/home/jspreer/Dropbox/Bibliography/bibliography}
	}

\end{document}